\let \P \relax
\newcommand{\P}{\mathbb{P}}
\newcommand{\N}{\mathbb{N}}
\newcommand{\R}{\mathbb{R}}
\newcommand{\de}{\delta}
\newcommand{\la}{\lambda}
\newcommand{\si}{\sigma}
\newcommand{\minus}{\smallsetminus}
\DeclareMathOperator{\GL}{GL}
\DeclareMathOperator{\cone}{cone}
\DeclareMathOperator{\depth}{depth}
\DeclareMathOperator{\sign}{sign}
\DeclareMathOperator{\interior}{int}
\newtheorem{definition}{Definition}
\newtheorem{remark}{Remark}
\newtheorem{theorem}{Theorem}
\newtheorem{lemma}{Lemma}
\newtheorem{example}{Example}
\begin{document}

\title{The Chiral Domain of a Camera Arrangement}
\author{Sameer Agarwal}
\address{Google Inc.}
\email{sameeragarwal@google.com}
\author{Andrew Pryhuber}
\address{University of Washington, Seattle}
\email{pryhuber@uw.edu}
\author{Rainer Sinn}
\address{Freie Universit\"at, Berlin}
\email{rainer.sinn@fu-berlin.de}

\author{Rekha R. Thomas}
\address{University of Washington, Seattle}
\email{rrthomas@uw.edu}
\thanks{Pryhuber and Thomas were partially supported by the NSF grant DMS-1719538}

% !TEX root =  main.tex
\begin{abstract}
We introduce the chiral domain of an arrangement of cameras $\mathcal{A} = \{A_1,\hdots, A_m\}$ which is the subset of $\P^3$ visible in $\mathcal{A}$. It 
generalizes the classical definition of chirality to include all of $\P^3$ and offers a unifying framework for studying multiview chirality. We give an algebraic description of the chiral domain which allows us to define and 
describe the chiral version of Triggs' joint image~\cite{triggs1995geometry,triggs95}.
We then use the chiral domain to re-derive and extend prior results on 
chirality due to Hartley~\cite{hartley1998chirality,HartleyZisserman2004}.
\end{abstract}

\maketitle

\section{Introduction}\label{sec:introduction} 

In computer vision, chirality refers to the constraint that for a scene point to be visible in a camera, it must lie {\em in front} of it~\cite{hartley1998chirality}. 
There is now a mature theory of multiview geometry that ignores this constraint~\cite{HartleyZisserman2004} modeling image formation only via 
algebraic constraints coming from a (projective) camera being a (rational) 
linear map from $\P^3$ to $\P^2$. Chirality imposes additional semialgebraic conditions. 

The study of chirality was initiated by Hartley 
in his seminal paper~\cite{hartley1998chirality}, much of which forms 
\cite[Chapter 21]{HartleyZisserman2004}. Faugeras and Laveau introduced the oriented projective geometry framework to model chirality constraints in vision \cite{laveau1996oriented}. Werner and Pajdla built on this framework deriving a theory of oriented matching constraints which enforce chirality \cite{werner2001oriented}.  In the case of two cameras, they gave a geometric interpretation of these constraints in the epipolar plane and suggest methods to use chirality for reducing the search space in stereo matching \cite{WernerPajdla2001}. 
Werner further showed that such orientation constraints naturally give rise to  combinatorial conditions on sets of images necessary for them to correspond to a true scene
\cite{werner2003combinatorial,werner2003constraint}.

In this paper we develop a general theory of {\em multiview chirality} for an arrangement of projective  
cameras $\mathcal{A} = \{A_1, \ldots, A_m\}$ with distinct centers. Our central contribution is the notion of the {\em chiral domain} of $\mathcal{A}$ which is the subset of $\P^3$ (the world) that 
is visible in the cameras of $\mathcal{A}$. This is a multiview generalization of the classical 
definition of chirality, and covers all of $\P^3$ including infinite points,
namely vanishing points,
and points on the principal planes of the cameras. The previous definition only covered 
finite world points. The extension is easy for one camera but subtle for multiple cameras 
as we explain. We show that the chiral domain admits a simple semialgebraic description by quadratic inequalities determined by the principal planes of the cameras and the plane at infinity. This description is the workhorse of the paper.

Recall that the {\em joint image} of $\mathcal{A}$ due to Triggs~\cite{triggs1995geometry}, 
\cite{triggs95}, is the set of all ``images'' of $\P^3$ in the cameras of  $\mathcal{A}$, ignoring chirality. Starting with the seminal work of Longuet-Higgins~\cite{longuet1981computer}, there is now a complete algebraic and set theoretic characterization of the joint image~\cite{idealsofthemultiviewvariety,M12,FLP01,HartleyZisserman2004,HA97,YM12,THP15}. The closure of the joint image is an algebraic variety in $(\P^2)^m$ called the {\em joint image variety} in \cite{THP15} and the {\em multiview variety} in \cite{idealsofthemultiviewvariety}, \cite{AST11}.

We define and describe the {\em chiral joint image} of $\mathcal{A}$, which is the analog of the joint image under the requirement of 
chirality. It is the image of the chiral domain of $\mathcal{A}$ 
and is thus the true image of the world in $\mathcal{A}$. 
The chiral joint image is a subset of the joint image.
One of our main results is a 
semialgebraic description  (using polynomial equalities and inequalities) of the chiral joint image. 
This semialgebraic description is a refinement of the multiview constraints (epipolar \& trifocal), when the world points are constrained to lie in front of the cameras. 
As a simple application, just like the joint image (epipolar constraints) is used to limit stereo matching to epipolar lines,  the chiral joint image can be used to further limit stereo matching to a subregion of the epipolar line (See \Cref{fig:cjiallregionsseperate} and~\Cref{fig:cjiallregions}).

% Chiral multiview geometry is still in its infancy and most of the basic questions remain unanswered. We will discuss three:
% \begin{enumerate}
% \item When can a nonchiral reconstruction be made chiral? 
% \item Given a set of cameras, what is the set of images of world points that lie in front of them?
% \item Given image matches, when does there exist a chiral reconstruction corresponding to them?
% \end{enumerate}

% In his seminal paper, Hartley not only introduced the term chirality, but also gave a complete answer to the first question for two views in the projective case~\cite{hartley1998chirality}. His results are constructive and efficient, i.e. they require solving up to two linear programs that are linear in the size of the reconstruction. Concurrently, Werner et al. also discovered some of the same results~\cite{Werner-OAGM98}. 

% If we ignore chirality, the answer to question 2 is known as the {\em joint image}~\cite{triggs1995geometry}. Starting with the seminal work of Longuet-Higgins~\cite{longuet1981computer}, there is now a complete algebraic and set theoretic characterization of the joint image~\cite{idealsofthemultiviewvariety,M12,FLP01,HartleyZisserman2004,YM12,HA97,THP15}. In the chiral case, Werner et al. provide a number of necessary conditions, but a complete characterization is not available~\cite{werner2001oriented,werner2003combinatorial}. 

Hartley studies the question of when a projective reconstruction of a set of point correspondences in $(\P^2)^m$ can be turned into a chiral reconstruction by applying a $\P^3$-homography. His answer is in terms of the feasibility of a system of linear inequalities called {\em chiral inequalities} \cite[Chapter 21]{HartleyZisserman2004} which boils down to solving two linear programs. Using the chiral domain we recover this result. We are also able to interpret quasi-affine transformations in this language. 

A limitation of Hartley's definition of chirality is that it only works for finite points. This is not a problem when reasoning about chiral projective reconstructions because 
for general projective cameras, if there is a 3-d reconstruction from images, then there is always a 3-d reconstruction with finite cameras and world points (a theorem of H-L. Lee [9]). This is not true for Euclidean cameras. Using the chiral domain (which defines chirality for all of $\mathbb{P}^3)$ we are able to extend Hartley's results to Euclidean reconstructions.

In \cite{hartley1998chirality}, Hartley also characterizes the existence of a chiral reconstruction for two-views in terms of a sign condition on the given projective reconstruction. Werner et. al. also study the two-view case, considering both minimal and nonminimal configurations~\cite{werner2003constraint,WernerPajdla2001}. Nist{\'e}r \& Schafflitzky consider the minimial problem in the Euclidean case~\cite{nisterschaffalitzky}. Our polyhedral approach provides a simple proof of this two-view result. In \cite{hartley1998chirality} Hartley remarks (without proof or explanation) that the result does not extend beyond two-views. We use our tools to construct a counterexample with three cameras and offer an explanation for the gap.

% Our results are complete in the sense that, except for the assumption of distinct camera centers, we do not make any other genericity assumptions. 

% To summarize, this paper 
% \begin{enumerate}
%     \item Introduces the concept of the Chiral Domain of an Arrangement of Cameras as a way of characterizing the visible part of $\mathbb{P}^3$. A semi-algebraic description of this set is provided.
%     \item  Using the semi-algebraic description of the chiral domain we are able to derive the chiral version of Trigg's joint image.
%     \item We provide alternate an alternate shorter of Hartley's result of chiral reconstructability in two views and show why the sign condition that characterize=
% \end{enumerate}

Hartley develops chirality using  classical projective geometry~\cite{hartley1998chirality,HartleyZisserman2004}. Other authors have used {\em oriented projective geometry} \cite{stolfi1991oriented} to model chirality (see for example, \cite{laveau1996oriented}, \cite{werner2001oriented}). This approach requires the choice of an orientation of the cameras involved based on a world point that is known to be in front of the cameras. The initial choice of orientation percolates down a chain of subsequent choices. In particular, the projective camera $A$ is considered different from $-A$ in this theory.

Following Hartley, we use classical projective geometry in this paper.
By staying in this setup we, like Hartley, are able to avoid all of the choices needed 
in oriented projective geometry, and still obtain a perfectly valid theory of chirality. 
Working in the projective framework is especially handy when describing the chiral joint image in this paper which is naturally a subset of the joint image, a quasi-projective algebraic variety. The trick is to derive meaningful inequalities in projective space, by which we mean inequalities that are invariant under scaling. 

This paper is organized as follows. In \Cref{sec:background}, we provide some background and set the notation. \Cref{sec:chirality} introduces the 
chiral domain of a camera arrangement. This is then used to define and describe the 
chiral joint image of the camera arrangement in \Cref{sec:chiraljointimage}. 
In \Cref{sec:chiral-reconstructions} we establish the connections to Hartley's results about when a projective reconstruction can be made chiral using a homography.
We also show how our results connect to quasi-affine transformations and Hartley's two-view results on chirality. The case of Euclidean reconstructions is treated in \Cref{sec:euclideanexistence}. Section~\ref{sec:summary} summarizes our contributions.
Many of the technical proofs in the three main sections can be found in~\Cref{sec:appendix}. 

{\bf Acknowledgments.} We thank Tomas Pajdla for discussions at the start of this project 
and for pointers to the chirality literature.

% !TEX root =  main.tex
\section{Background and Notation}
\label{sec:background}

 The sets of nonnegative integers, nonnegative real numbers, and positive real numbers are denoted by $\N, \R_+$, and $\R_{++}$, respectively. 
 $\P^n$ denotes n-dimensional projective space over the reals, which is $\R^{n+1} \minus \{0\}$ modulo the equivalence relation $\sim$ where 
$\mathbf{x} \sim \mathbf{y}$ if $\mathbf{x}$ is a scalar multiple of $\mathbf{y}$. If $\mathbf{x} \sim \mathbf{y}$, then we say that 
$\mathbf{x}$ and $\mathbf{y}$ are equal in $\P^n$, or $\mathbf{x}$ is {\em identified with} $\mathbf{y}$. We use $=$ to denote coordinate wise equality in $\R^n$.

In multiview geometry, we focus on $\P^3, \P^2$ and $\R^3, \R^2$, where $\P^n$ is a compactificaction of $\R^n$ with respect to the embedding $\R^n\to \P^n$, $\mathbf{x}\mapsto \widehat{\mathbf{x}} = (\mathbf{x},1)$. So points whose last coordinate is nonzero are said to be \emph{finite}, whereas points whose last coordinate is $0$ form the \emph{hyperplane at infinity}. We write the plane at infinity as 
$L_\infty := \{ \mathbf{q} \in \P^3 \,:\, \mathbf{n}_\infty^\top \mathbf{q} = 0 \},
$ where we fix the normal $\mathbf{n}_\infty  = (0,0,0,1)^\top$. 

We denote points in $\P^3$ and $\R^3$ by $\mathbf{q}$ allowing the context to decide where $\mathbf{q}$ lies. Similarly we denote points in $\P^2$ and $\R^2$ by $\mathbf{p}$. 
The dehomogenization of a finite point $\mathbf{q} \in \P^3$ is denoted $\widetilde{\mathbf{q}} := (q_1/q_4, q_2/q_4, q_3/q_4)^\top$.

% \sanote{Andrew refine this by first defining this affinely and then projectively}
%  The {\em projectivization} of a set $S \subseteq \R^{n+1}$ is the set
% $\P(S) = \{ [\mathbf{x}] \in \P^{n}\ |\ \exists \lambda \in \R\setminus\{0\} \text{ s.t. } 
% \lambda \mathbf{x} \in S \}$,  where $[x]$ denotes the equivalence class of $x\in\R^{n+1}\setminus\{0\}$ in $\P^n$.\sanote{Why do we need to introduce $[x]$ here? why can't we just say $\P(S) = \{ [\mathbf{y} \in \P^{n}\ |\ \exists \lambda \in \R\setminus\{0\} \text{ s.t. } 
% \lambda \mathbf{y} \in S \}$} In other words, $\P(S)$ is the image of $S$ under the quotient map $\R^{n+1}\setminus\{0\} \to \P^n(\R)$, $x\mapsto [x]$. 
% For an example see \Cref{fig:projectivization}.
The {\em projectivization} of a set $S \subseteq \R^{n+1}$ is the union of all lines through the origin in $\R^{n+1}$ that intersect $S$.
%In other words, $\P(S)$ is the image of $S$ under the quotient map $\R^{n+1}\setminus\{\mathbf{0}\} \to \P^n(\R)$, sending $\mathbf{x}$ to the line spanned by $\mathbf{x}$ and the origin. 
For an example see \Cref{fig:projectivization}.

\begin{figure}
  \centering
  \includegraphics[width=.6\linewidth]{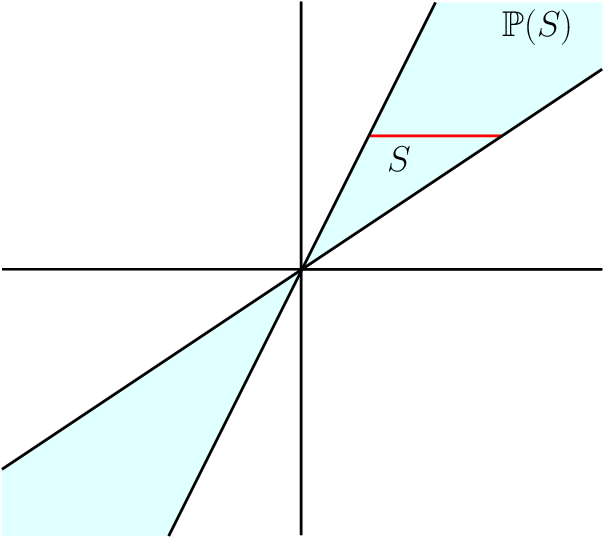} 
  \caption{Projectivization of a set $S \subset \R^2$. }
%   The lines through the origin intersecting $S$ are the \textcolor{red}{points} in $\P(S)$. \rtnote{delete 2nd line?}
  \label{fig:projectivization}
\end{figure}

A {\em projective} camera is a matrix $A= \begin{bmatrix} G & \mathbf{t} \end{bmatrix} \in 
\R^{3 \times 4}$ of rank $3$. The camera A is {\em finite} if $\det(G) \neq 0$. The {\em center} of the camera $A$ is the unique point $\mathbf{c} \in \P^3$ such that 
$A \mathbf{c}=0$. The camera $A$ is 
finite if and only if its center $\mathbf{c}\in \P^3$ is finite.
All cameras considered in this paper are finite. For consistency, we will choose the $\R^4$ representative $\mathbf{c}_A = \begin{bmatrix} -G^{-1} \mathbf{t} \\ 1 \end{bmatrix}$ for the center of $A$ .

The {\em principal plane} of a finite 
camera $A= \begin{bmatrix} G & \mathbf{t} \end{bmatrix}$ is the hyperplane
 $L_A := \{ \mathbf{q} \in \P^3 \,:\, A_{3,\bullet} \mathbf{q} = 0 \},$
 where $A_{3,\bullet}$ is the third row of $A$, i.e. $L_A$ is the set of points in $\P^3$ that image to infinite points in $\P^2$ under camera $A$. Note that the camera center $\mathbf{c}$ lies on $L_A$. We regard $L_A$ as an oriented hyperplane in $\R^4$ with normal vector $\mathbf{n}_A := \det(G) A_{3 \bullet}^\top$, which we call the \emph{principal ray} of $A$. 
 The $\det(G)$ factor makes sure that if we pass from $A$ to $\lambda A$ for some nonzero scalar $\lambda \in \R$, the normal vector of the 
principal plane does not change sign. 

The {\em world} $\R^3$, which is to be imaged by $A$, is modeled as the affine patch in $\P^3$ with $q_4 = 1$. This allows the 
identification of a 
finite point  $\mathbf{q} \in \P^3$ with the world point $\widetilde{\mathbf{q}} \in \R^3$, and a world point $\mathbf{q} \in \R^3$ with the finite point 
$\widehat{\mathbf{q}} \in \P^3$. The image of $\mathbf{q} \in \P^3$, in the camera $A$ is  $A\mathbf{q} \in \P^2$. The rational map $A\colon \P^3 \dashrightarrow \P^2$, $\mathbf{q}\mapsto A\mathbf{q}$, is defined for all
$\mathbf{q} \in \P^3$ except the center $\mathbf{c}$ of $A$\footnote{The broken arrow ($\dashrightarrow$) and the phrase ``rational map" mean here that the domain of the map $A$ is not actually $\P^3$ but rather $\P^3\setminus\{\mathbf{c}\} $.}. 

Our theory is developed using explicit coordinate representations of geometric objects such as cameras, their principal rays, and the plane at infinity as discussed above. Throughout the paper, we favor expressions in terms of concrete vectors such as $\mathbf{n}_\infty^\top \mathbf{q}$ and $\mathbf{n}_A^\top \mathbf{q}$. We remark that these expressions are not intended to be thought of as coordinate-free.

Let $\mathcal{A} = \{A_1, \ldots, A_m\}$ denote 
an arrangement of $m$ cameras with distinct centers. We use the shorthand $\mathbf{n}_i$ for the principal ray of camera $A_i$. Given a pair of cameras $A_i,A_j$ with centers $\mathbf{c}_i$ and $\mathbf{c}_j$, let $\mathbf{e}_{ij}$  denote the image of $\mathbf{c}_j$ in $A_i$. The points $\mathbf{e}_{ij}$ are called {\em epipoles}.  The line through $\mathbf{c}_i, \mathbf{c}_j$ is 
called the {\em baseline} of the pair of cameras $\{A_i, A_j\}$.
All points on the base line (except for the centers themselves) will image in the two cameras at their respective epipoles $(\mathbf{e}_{ij}, \mathbf{e}_{ji})$.

We now recall some basics of convex geometry. Further details can be found in \cite{boyd2004convex}. The (polyhedral) {\em cone} $K_U$ spanned by a set of vectors $U = \{\mathbf{u}_1, \ldots, \mathbf{u_\ell}\}\subseteq \R^n$ is the set of all nonnegative linear combinations of the vectors in $U$, so
\begin{align*}
K_U &:=  \textup{cone}(\mathbf{u}_1, \ldots, \mathbf{u}_\ell) \\
    &= \{ x_1 \mathbf{u}_1 + \cdots +  x_\ell \mathbf{u}_\ell \ |\ x_1, \ldots, x_\ell \geq 0 \}.
\end{align*}

The \emph{dimension} of a cone $K_U \subseteq \R^n$ is the dimension of the smallest vector space containing it. Let $\interior K_U$ denote the relative interior of $K_U$, namely the 
interior of $K_U$ in the linear span of $U$. A cone is {\em pointed} if it does not contain a line.

The {\em dual cone} $K_U^\ast$ to $K_U$ is the set of all vectors that make nonnegative inner product with every vector of $K_U$, so
\begin{equation}
K_U^\ast := \{\mathbf{y}\ |\ \forall\ \mathbf{x} \in K_U \colon \mathbf{y}^\top \mathbf{x} \geq 0 \}.
\end{equation}

\begin{figure}[ht]
  \centering
  \includegraphics[width=.7\linewidth]{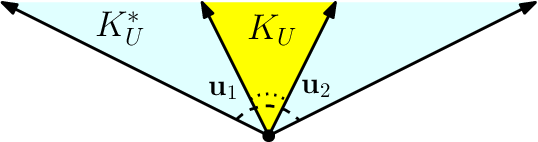} 
  \caption{A cone $K_U$ and its dual cone $K_U^\ast$. }
  \label{fig:dualcone}
\end{figure}

Polyhedral cones are bidual in the sense that $K_U = (K_U^\ast)^\ast$.
The cone $K_U$ is pointed if and only if $K_U^\ast$ is full-dimensional, and because of 
biduality, $K_U$ is full-dimensional if and only if $K_U^\ast$ is pointed. 
If $K_U^\ast$ is full-dimensional, then  
$\interior K_U^\ast = 
\{\mathbf{y}\ |\ \mathbf{y}^\top \mathbf{u}_i > 0, \,\,\forall\, i=1,\ldots, \ell \}$. Otherwise, $K_U = L + K_W$ where $L$ is a subspace and $K_W = \textup{cone}(\mathbf{w}_1, \ldots, \mathbf{w}_p)$ is a pointed cone. Writing $L^\perp$ as the kernel of a matrix $A$, 
$K_U^\ast = L^\perp \cap K_W^\ast = 
\{\mathbf{y} \,:\, A \mathbf{y} = 0, \mathbf{y}^\top \mathbf{w}_j > 0 \,\, \forall \, j=1,\ldots,p \}$. See \Cref{fig:dual of cone sum}.

\begin{figure}[ht]
  \centering
  \includegraphics[width=.9\linewidth]{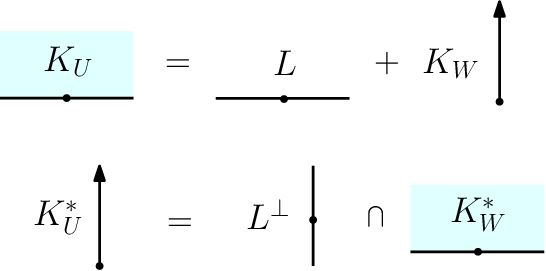} 
  \caption{The non-pointed cone $K_U$ on the top left is the sum of the subspace 
  $L$ and the pointed cone $K_W$. Its dual cone $K_U^\ast$ in the second row is the 
  intersection of $L^\perp$ and $K_W^\ast$.}
  \label{fig:dual of cone sum}
\end{figure}

\begin{remark}\label{rem:LP check}
Membership in $\interior K_U^\ast$ can be determined via linear programming. 
Indeed, the optimal value of the linear program 
$$\max \{ \varepsilon  \,:\, A\mathbf{y} = 0, \,\,\varepsilon \leq 1, \,\,
\mathbf{y}^\top \mathbf{w}_j \geq \varepsilon, \,\,j=1,\ldots,p \}$$
is positive if and only if $\interior K_U^\ast \neq \emptyset$.
\end{remark}

\section{The Chiral Domain of an Arrangement of Cameras}
\label{sec:chirality}
%In this section we take a closer look at definition of chirality of a point and develop the central contribution of this paper, the concept of the chiral domain of an arrangement of cameras.

We begin by recalling the definition of the depth of a finite point $\mathbf{q}$ in a finite camera $A$. It is essentially the projection of $\widetilde{\mathbf{q}}-\widetilde{\mathbf{c}}$ along the principal ray, see \cite{HartleyZisserman2004}.
Formally, it is defined as
\begin{equation}
\operatorname{depth}(\mathbf{q};A) := 
\left(\frac{1}{|\det(G)|\|G_{3,\bullet}\|} \right) \frac{( \mathbf{n}_A^\top \mathbf{q})}{(\mathbf{n}_\infty^\top \mathbf{q})}. \label{def:depth}
\end{equation}
Notice that the sign of $\operatorname{depth}(\mathbf{q};A)$ is unaffected by scaling $\mathbf{q}$. 
In fact, this definition of depth as a rational function of degree $0$ in $\mathbf{q}$ (meaning that the degree of numerator and denominator are equal) underlines the inherently projective nature of the notion of depth. Furthermore, scaling $A$ also does not affect the sign of $\operatorname{depth}(\mathbf{q};A)$ because the orientation of $\mathbf{n}_A = \det(G)A_{3,\bullet} $ is independent of scaling.

We say that a finite point
$\mathbf{q}$ not on the principal plane is {\em in front of} the camera
if $\operatorname{depth}(\mathbf{q};A) > 0$~\cite{hartley1998chirality}.  Since only the sign of 
$\operatorname{depth}(\mathbf{q};A)$ matters, we define the {\em chirality} of $\mathbf{q}$ in $A$ 
to be $\chi(\mathbf{q};A) = \sign(\operatorname{depth}(\mathbf{q};A))$. This is either $1$ or $-1$. This definition of chirality excludes finite points with zero depth and points at infinity.
Also, it treats chirality  as a per camera concept.

% The depth of a finite point $\mathbf{q} \in \P^3$ in a finite camera $A$ defined in \Cref{def:depth} is $0$ if and only if $\mathbf{n}_A^\top \mathbf{q}=0$. This happens if and only if $\mathbf{q}$ lies on the principal plane $L_A$. %Otherwise, $\mathbf{n}_A^\top \mathbf{q} \neq 0$ and the sign of $\operatorname{depth}(\mathbf{q};A)$ is the same as the sign of the product $(\mathbf{n}_A^\top \mathbf{q})(\mathbf{n}_\infty^\top \mathbf{q})$ which is either positive or negative. 
% It is then natural to say that a finite point $\mathbf{q}$ is {\em in front of} the camera $A$ if $\operatorname{depth}(\mathbf{q};A) > 0$~\cite{hartley1998chirality}. Since only the sign of 
% $\operatorname{depth}(\mathbf{q};A)$ matters, it is usual to refer to this sign as the {\em chirality} of $\mathbf{q}$ in $A$, denoted as $\chi(\mathbf{q};A)$, which is either $1$ or $-1$. 

%The depth of a finite point $\mathbf{q} = (q_1,q_2,q_3,1) \in \P^3$ in a finite camera 
%$A$ (\Cref{def:depth}) is finite since $\mathbf{n}_\infty^\top \mathbf{q} = 1$.  It is 
%$0$ if and only if $\mathbf{n}_A^\top \mathbf{q}=0$ which happens if and only if 
%$\mathbf{q}$ lies on the principal plane $L_A$. A point $\mathbf{q}$ is 
%{\em in front} of the camera if $\mathbf{q}$ lies in the same half space of $L_A$ in $\R^4$ 
%as the principal ray, i.e., $\mathbf{n}_A^\top \mathbf{q} > 0$. This inequality is equivalent to $\operatorname{depth}(\mathbf{q};A) > 0$ for finite points. 
%Since scaling $\mathbf{q}$ and/or $A$ does not change the sign of 
%$\operatorname{depth}(\mathbf{q};A)$, 
 
 In this section we will extend the above notion of chirality to all points in $\P^3$ with respect to one or more finite cameras. This will then lead to the central 
concept of this paper, the {\em chiral domain of an arrangement of cameras}, which we use to develop a unified theory of multiview chirality.

%The exclusion of points on the principal plane and the plane at infinity forces us to work 
%with strict inequalities which makes the algebraic treatment of chirality complicated.
The exclusion of points on the principal plane and the plane at infinity makes the algebraic 
treatment of chirality complicated because it forces us to work with strict inequalities 
to avoid boundary points where depth is not defined. Our generalization 
below remedies this situation and in particular leads to  
an algebraic description of the {\em chiral joint image} of a camera arrangement as a subset of the classical {\em joint image}~\cite{idealsofthemultiviewvariety,THP15}. 

We now discuss how we might decide the chirality of points on the principal plane and the plane at infinity. Let us begin by considering the case of one camera: Traveling along the line through a point $\mathbf{a}\in\R^3$ in direction $\mathbf{q}\in\R^3$ corresponding to a point $\widehat{\mathbf{q}}\in\P^3$, 
we see from the definition that chirality %$\chi(\mathbf{a} + t \mathbf{q};A)$ 
changes only when we cross the principal plane or the plane at infinity (or it is always $0$). 
So any point on either plane is arbitrarily close to finite points of chirality $1$ in the camera $A$.
More precisely, we should argue in $\P^3$ directly: Suppose $\mathbf{q} = (q_1,q_2,q_3,0)^\top \in L_\infty$ and 
$\mathbf{a} = (a_1, a_2, a_3, 1)^\top$ has positive depth in $A$. 
We may assume that $\mathbf{n}_A^\top \mathbf{q} \geq  0$ since otherwise we can work with $-\mathbf{q}$. Then the points
$\mathbf{q}_t := \frac{1}{t} \mathbf{a} + \mathbf{q}$ have positive depth for $t>0$, 
and since $\mathbf{q} = \lim_{t \rightarrow \infty} \mathbf{q}_t$, it is arbitrarily close 
to points with chirality $1$ in $A$. Now suppose $\mathbf{q} \in L_A \backslash L_\infty$. 
We may assume that 
$q_4 > 0$ since otherwise we can take $-\mathbf{q}$. As before, pick a finite point $\mathbf{a} = (a_1, a_2, a_3, 1)^\top$ with positive depth in $A$. Then the sequence of points 
$\mathbf{q}_t := \frac{1}{t} \mathbf{a} + \mathbf{q}$ have chirality $1$ for all $t > 0$ and again,  
$\mathbf{q}$ is arbitrarily close to finite points of positive depth in $A$.
Thus it seems natural to consider all points on $L_\infty \cup L_A$ to have
chirality $1$ in $A$. We could just as well argue that all points in $L_A \cup L_\infty$ should 
have chirality $-1$ with respect to the camera $A$ by a similar argument to the above. However, 
since points on $L_\infty$ are vanishing points of rays with positive depth in $A$,
our physical intuition is that $L_\infty$ is visible in $A$, so points on $L_\infty$ should have chirality 1 in $A$.

For arrangements of cameras, even just two cameras, the situation is more complicated.
Consider \Cref{fig:trackoppositeface}, where two cameras 
$A_1, A_2$ are placed on a train track looking in opposite directions so that 
$\mathbf{n}_{A_1} = - \mathbf{n}_{A_2}$. For instance, we could choose the camera facing right 
to be $A_1 = \begin{bmatrix} I & \mathbf{0} \end{bmatrix}$ and the camera facing left to be $$A_2 = 
\begin{bmatrix} 1 & 0 & 0 & 0 \\ 0 & -1 & 0 & 1 \\ 
0 & 0 & -1 & 0 \end{bmatrix}.$$
Then $\mathbf{c}_1 = (0,0,0,1)^\top$ and $\mathbf{c}_2 = (0,1,0,1)^\top$ which 
differ in the vertical direction and the principal rays drawn in $\R^3$ are 
$(0,0,1)$ and $(0,0,-1)$.
For any finite $\mathbf{q} \not \in L_{A_1} = L_{A_2}$, $\chi(\mathbf{q};A_1) = - \chi(\mathbf{q};A_2)$, and hence there is no 
sequence of finite points that each have positive depth in 
the two cameras that can approach points on $L_\infty$ or $L_{A_1} = L_{A_2}$. This is in contrast to the single camera case where all points can be approached by finite points with positive depth in the camera.

\begin{figure}[h]
    \centering
    \includegraphics[width=\linewidth]{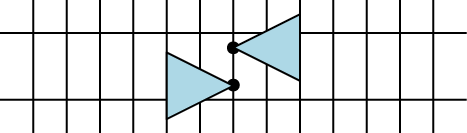}
    \caption{Cameras facing in opposite directions on train tracks.}
    \label{fig:trackoppositeface}
\end{figure}
\begin{figure*}[h]
\begin{subfigure}{.49\textwidth}
  \centering
  \includegraphics[width=\linewidth]{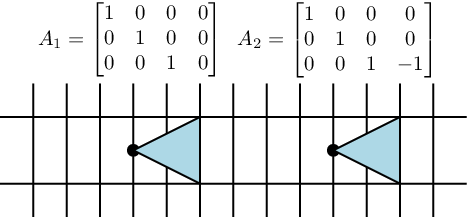} 
  \caption{}
  \label{fig:tracksameface}
  \centering
  \includegraphics[width=.49\linewidth]{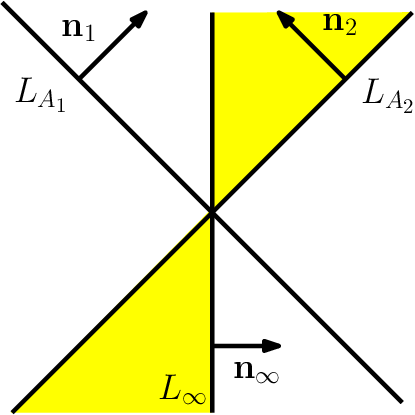}
  \caption{}
\label{fig:biginfiniteintersection}
\end{subfigure}
\begin{subfigure}{.49\textwidth}
  \centering
  \includegraphics[width=\linewidth]{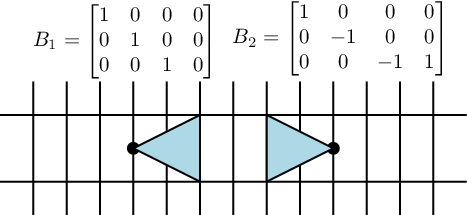}
  \caption{}
  \label{fig:trackopposingcams}
  \centering
  \includegraphics[width=.49\linewidth]{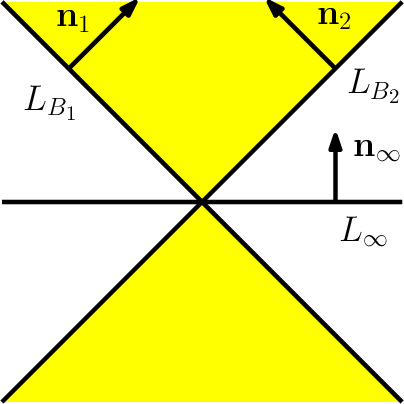}
\caption{}
\label{fig:smallinfiniteintersection}
\end{subfigure}

\caption{Two cameras on parallel tracks facing the same direction in \Cref{fig:tracksameface} and opposing directions in \Cref{fig:trackopposingcams}. The shaded area in \Cref{fig:biginfiniteintersection} and \Cref{fig:smallinfiniteintersection} represents the chiral domain of the arrangements  $\mathcal{A}$ and $\mathcal{B}$ around their intersection points with the hyperplane at infinity (that is after projective transformation of the corresponding \Cref{fig:tracksameface} and \Cref{fig:trackopposingcams}). Every infinite point is in the chiral domain of $\mathcal{A}$. The only infinite points in the chiral domain of $\mathcal{B}$ are those in the intersection $L_\infty \cap L_{B_1} \cap L_{B_2}$.}
\label{fig:two cam chiral domain}
\end{figure*}

This brings us to the following definition, that 
declares a point to be in front of a collection of cameras if and only if 
the point can be approached by a sequence of finite points that have 
chirality $1$ in each camera. 
%But what if we demanded that an infinite point is only visible in two or more cameras, if it can be approached by traveling in front of both cameras? This motivates the following definition of multiview chirality:

% Unfortunately, this definition of chirality for points at infinity does not work for multiple cameras. Consider Figure~\ref{fig:chiraldomain}(b), where two cameras are placed on a train track looking in opposite directions. It does not make any sense to say that both these cameras simultaneously see any point in $\P^3$ (except perhaps the points at infinity corresponding to the intersection of their principal planes). 

%The problem here is that we can approach the plane at infinity from the front or the back of a camera, and if we define chirality of a point on a per camera basis, the entire plane at infinity is visible in all cameras, regardless of their orientation. 

\begin{definition}[Chiral Domain of $\mathcal{A}$ and Chirality]
 \label{def:chiral domain}
 Let $\mathcal{A}$ be an arrangement of finite cameras. Then the \emph{chiral domain} of $\mathcal{A}$, denoted as 
 $D_\mathcal{A}$, is the closure of the set
  \[
 \{\mathbf{q} \in \P^3 \,|\, \mathbf{q} \text{ finite},\,\ \forall A\in \mathcal{A}, \ \operatorname{depth}(\mathbf{q}; A) > 0 \}.
 \]
Moreover, a point $\mathbf{q} \in \P^3$ is said to have \emph{chirality 1 with respect to }$\mathcal{A}$, denoted as  $\chi(\mathbf{q} ; \mathcal{A}) = 1$, if and only if $\mathbf{q} \in D_\mathcal{A}$.
\end{definition}

The limit in the above definition is defined using the natural topology in $\P^3$ induced by the quotient map $\pi: \R^4 \minus \{0\} \to \P^3$ in which  $\pi(\mathbf{v}) = \pi(\mathbf{w})$ if and only if 
$\mathbf{v} \sim \mathbf{w}$. In this 
quotient topology, a set $U \subseteq \P^3$ is open if
and only if its preimage $\pi^{-1}(U)$ is open in $\R^4 \minus \{0\}$ in the Euclidean topology. 
Thus $\mathbf{q}\in\P^3$ is a limit point of  a sequence $\{\mathbf{q}_i\} \subseteq \P^3$ if and only if all 
open sets $\pi^{-1}(U)$ containing the line $\pi^{-1}(\mathbf{q})$ contains the line $\pi^{-1}(\mathbf{q}_i)$ for some $i$. The closure of a set $S \subseteq \mathbb{P}^3$ is the set of all limit points of sequences in $S$.

\begin{remark}
The chiral domain is nonempty if and only if it has nonempty interior. Indeed, if $D_\mathcal{A}$ is nonempty, there is a finite point $\mathbf{q} \in\P^3$ that has positive depth in all cameras of $\mathcal{A}$. Since depth depends continuously on the finite point, there is a neighborhood $U\subseteq \P^3$ of finite points with positive depth in all cameras. 
\end{remark}

We illustrate~\Cref{def:chiral domain} using the camera arrangements in~\Cref{fig:two cam chiral domain}. In~\Cref{fig:tracksameface} 
$\mathbf{n}_{1} = (0,0,1,0)$ and $\mathbf{n}_{2}=(0,0,1,-1)$ and hence the 
principal rays point in the same direction $(0,0,1)$ in $\R^3$.
The principal planes are parallel but distinct in $\R^3$, hence $L_{A_1}$ and 
$L_{A_2}$ intersect $L_\infty$ 
in a two-dimensional subspace 
of $\R^4$ (line in $\P^3$). \Cref{fig:biginfiniteintersection} shows this intersection after projecting from a common point of $L_{A_1}, L_{A_2}$ and $L_\infty$ in order to get a $2$-dimensional picture, where the projections of these three planes intersect in a point.
The vanishing point of the train tracks has chirality 1 with respect to both cameras as there is a sequence of finite points with chirality 1 in both cameras, that converges to it.  The chiral domain of the pair of cameras is shaded yellow and 
all of $L_\infty$ is visible in both cameras.

The cameras $B_1, B_2$ in 
\Cref{fig:trackopposingcams} have principal rays $\mathbf{n}_{B_1} = (0,0,1,0)$ and 
$\mathbf{n}_{B_2} = (0,0,-1,1)$. In $\R^3$, they are 
pointed in opposite directions.
The principal planes are again parallel but distinct in $\R^3$, hence $L_{B_1}$ and 
$L_{B_2}$ intersect $L_\infty$ in a projective line. The only part of $L_\infty$ in 
the chiral domain (shaded in yellow) is the intersection $L_\infty \cap L_{B_1} \cap L_{B_2}$ as seen in \Cref{fig:smallinfiniteintersection}. These examples show that when there are multiple cameras, 
the chirality of points on $L_{A_i}$ and $L_\infty$ need more care. 

Our next goal is to give an algebraic description of the chiral domain in terms of inequalities. The above example shows that $D_{\{A_1,\ldots,A_m\}}$ is not equal to the intersection of the chiral domains $D_{\{A_i\}}$ of the individual cameras. \Cref{fig:trackopposingcams} shows a counter example: The plane $L_\infty$ at infinity is in the chiral domain of both cameras $B_1$ and $B_2$ but only a part of the plane at infinity is visible in both cameras, that is in $D_{\{B_1,B_2\}}$. This also implies that, in order to obtain an inequality description of $D_\mathcal{A}$, it is not enough to simply relax the strict inequalities $\depth(\mathbf{q};A)>0$, which leads to the inequalities $(\mathbf{n}_\infty^\top \mathbf{q}) (\mathbf{n}_i^\top \mathbf{q}) \geq 0$. These inequalities cut out a set that is too big: again, all of them are satisfied for every point on the plane at infinity but $D_\mathcal{A}$ usually only contains a subset of it. The following inequality description of $D_\mathcal{A}$ takes this subtlety into account.

\begin{theorem} \label{thm:chiral set of an arrangement}
Let $\mathcal{A} = \{A_1,\ldots,A_m\}$ be an arrangement of finite cameras. If the chiral domain, $D_\mathcal{A}$, is nonempty, then it has the semi-algebraic description
\begin{equation} \label{eq:same sign conditions}
    D_\mathcal{A} = \left \{ 
    \mathbf{q} \in \P^3\ \vert \ 
    \forall i,j, \begin{array}{l} (\mathbf{n}_\infty^\top \mathbf{q}) (\mathbf{n}_i^\top \mathbf{q})   \geq 0,\\
    (\mathbf{n}_i^\top \mathbf{q})(\mathbf{n}_j^\top \mathbf{q})\geq 0
    \end{array}\right\},
\end{equation}
where $\mathbf{n}_i$ is the principal ray of $A_i$.
\end{theorem}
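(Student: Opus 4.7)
The plan is to prove the two set inclusions separately, writing $S$ for the right-hand side of the claimed description. The direction $D_\mathcal{A}\subseteq S$ is a short continuity argument, while $S\subseteq D_\mathcal{A}$ is the substantive direction where the hypothesis $D_\mathcal{A}\ne\emptyset$ plays a crucial role.

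First I would prove $D_\mathcal{A}\subseteq S$. Take $\mathbf{q}\in D_\mathcal{A}$, so there is a sequence of finite points $\mathbf{q}_k\in\P^3$ with $\depth(\mathbf{q}_k;A_i)>0$ for every $i$ converging to $\mathbf{q}$. For any representative of a finite point, positive depth in $A_i$ means that $\mathbf{n}_\infty^\top \mathbf{q}_k$ and $\mathbf{n}_i^\top \mathbf{q}_k$ are nonzero of the same sign; consequently all of $\mathbf{n}_\infty^\top \mathbf{q}_k,\mathbf{n}_1^\top \mathbf{q}_k,\ldots,\mathbf{n}_m^\top \mathbf{q}_k$ share a common sign, and each of the products appearing in $S$ is strictly positive. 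Because these products are homogeneous quadratic forms in $\mathbf{q}$, their sign is constant along each projective line and hence well-defined on $\P^3$. Using the description of the quotient topology recalled just after \Cref{def:chiral domain}, one can lift the projective convergence $\mathbf{q}_k\to \mathbf{q}$ to convergence of representatives in $\R^4\minus\{0\}$, and taking limits of the continuous quadratic forms then yields the weak inequalities defining $S$.

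For $S\subseteq D_\mathcal{A}$, I would use the nonemptiness hypothesis to pick a finite point $\mathbf{a}\in D_\mathcal{A}$ with representative satisfying $\mathbf{n}_\infty^\top \mathbf{a}>0$ and $\mathbf{n}_i^\top \mathbf{a}>0$ for every $i$. Given $\mathbf{q}\in S$, the inequalities force the nonzero values among $\mathbf{n}_\infty^\top \mathbf{q}, \mathbf{n}_1^\top \mathbf{q},\ldots,\mathbf{n}_m^\top \mathbf{q}$ to share a common sign, so after possibly replacing $\mathbf{q}$ by $-\mathbf{q}$ I may assume each of these values is $\geq 0$. The perturbation
\begin{equation*}
\mathbf{q}_t \;:=\; \frac{1}{t}\,\mathbf{a} + \mathbf{q}, \qquad t>0,
\end{equation*}
then satisfies $\mathbf{n}_\infty^\top \mathbf{q}_t>0$ and $\mathbf{n}_i^\top \mathbf{q}_t>0$ for every $i$, so $\mathbf{q}_t$ is a finite point of strictly positive depth in every camera of $\mathcal{A}$. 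Since $\mathbf{q}_t\to \mathbf{q}$ in $\R^4\minus\{0\}$ as $t\to\infty$, the corresponding convergence holds in $\P^3$, placing $\mathbf{q}$ in the closure $D_\mathcal{A}$.

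The point I expect to require the most care is the sign-alignment step in the reverse direction, when $\mathbf{q}$ lies on several of the principal planes or on $L_\infty$ so that some of the inner products $\mathbf{n}_i^\top\mathbf{q}$ or $\mathbf{n}_\infty^\top\mathbf{q}$ vanish. The two quadratic inequalities of $S$ are designed precisely for this situation: they force the nonzero values to agree in sign while allowing zeros, and the vanishing values are then rendered strictly positive by the perturbation $\mathbf{q}_t$, so the argument goes through uniformly and no separate case analysis for points at infinity or on principal planes is needed.
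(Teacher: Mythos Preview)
Your proof is correct and follows essentially the same mathematical path as the paper's, though the framing differs. The paper works through a polyhedral cone: it sets $Q=\{\mathbf{q}\in\R^4:\mathbf{n}_i^\top\mathbf{q}\geq 0,\ \mathbf{n}_\infty^\top\mathbf{q}\geq 0\}$, identifies the positive-depth locus with the projectivization of $\interior Q$, and then observes that $D_\mathcal{A}$ is the projectivization of $Q$ (the closure of $\interior Q$), which is exactly the set cut out by the pairwise sign products. Your explicit perturbation $\mathbf{q}_t=\tfrac{1}{t}\mathbf{a}+\mathbf{q}$ is precisely what makes the implicit step ``$\overline{\interior Q}=Q$'' concrete, and your sign-alignment observation is what underlies the identification of $\P(Q)$ with the right-hand side $S$. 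So your argument is really a hands-on unpacking of the paper's compressed cone-based proof rather than a genuinely different route; it has the advantage of being self-contained, while the paper's version highlights the polyhedral structure that is reused later.
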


Note that the inequalities $(\mathbf{n}_\infty^\top \mathbf{q}) (\mathbf{n}_i^\top \mathbf{q})   \geq 0$ (and also $(\mathbf{n}_i^\top \mathbf{q})(\mathbf{n}_j^\top \mathbf{q})\geq 0$) are well-defined in  $\P^3$ because in each case, the polynomial $f$ on the left hand side has even degree (namely $2$) in $\mathbf{q}$, that is $f(\lambda \mathbf{q}) = \lambda^2 f(\mathbf{q})$ for every real scalar $\lambda$. Therefore, the sign of $f(\mathbf{q})$ is constant along the line spanned by $\mathbf{q}$ in $\R^4$. 

\begin{proof}[of \Cref{thm:chiral set of an arrangement}] 
If $D_\mathcal{A}$ is nonempty, then it has a nonempty interior. Let $S$ be the interior of $D_\mathcal{A}$, i.e., the set of finite points in $\P^3$ that have positive depth in all cameras in $\mathcal{A}$. Such points correspond to lines through the origin in $\R^4$, 
consisting of points $\mathbf{q}$ that have 
($\mathbf{n}_i^\top\mathbf{q} > 0$ and $\mathbf{n}_\infty^\top \mathbf{q} > 0$) or 
($\mathbf{n}_i^\top\mathbf{q} < 0$ and $\mathbf{n}_\infty^\top \mathbf{q} <  0$). 
Let $Q\subseteq \R^4$ be the polyhedral cone defined by the inequalities 
$\mathbf{n}_i^\top\mathbf{q} \geq 0$ and $\mathbf{n}_\infty^\top \mathbf{q}\geq 0$.
Then we see that $S = \P(\interior Q \cup - \interior Q)$ and hence 
the projectivization of $\interior Q$. This implies that  
$D_\mathcal{A}$ is the projectivization of 
$Q$ which can be defined by the quadratic inequalities $(\mathbf{n}_i^\top\mathbf{q})(\mathbf{n}_j^\top \mathbf{q})\geq 0$, where $\{i,j\}$ ranges over all $2$-element subsets of $\{1,2,\ldots,m,\infty\}$. \qed
\end{proof}

\begin{remark}
The inequality description of $D_\mathcal{A}$ 
in \eqref{eq:same sign conditions} is only valid when 
$D_\mathcal{A}$ is nonempty. Indeed, the set on the right hand side can be 
nonempty even if $D_\mathcal{A}$ is empty. 
For example, for the cameras in \Cref{fig:trackoppositeface}, $D_\mathcal{A} = \emptyset$ as there 
is no finite point with positive depth in both cameras. However, all points that lie on the line 
that is the intersection of $L_\infty$ with the (common) principal plane of the cameras 
satisfies the (non-strict) inequalities on the right hand side of \eqref{eq:same sign conditions} even though 
there is no point in $\P^3$ where the inequalities are satisfied strictly.
In general, the nonstrict inequalities admit all points that are on the principal planes of some cameras in $\mathcal{A}$ and have nonnegative depth in the others. 
\end{remark}

\begin{remark}
Note from the proof of \Cref{thm:chiral set of an arrangement} that 
$D_\mathcal{A}$ is the projectivization of the polyhedral cone $Q \subseteq \R^4$ 
defined by the linear inequalities $q_4\geq 0$ and $\mathbf{n}_i^\top \mathbf{q} \geq 0$ ($i=1,2,\ldots,m$). In other words, the lines in $\R^4$ corresponding to the points in 
$D_\mathcal{A}$ are exactly the lines through the origin in $Q \cup -Q$. Therefore, 
$D_\mathcal{A}$ is inherently a polyhedral set even though 
\Cref{thm:chiral set of an arrangement} describes $D_\mathcal{A}$ using quadratic inequalities.
% that it is equal to the projectivization $\P(Q)$ of a polyhedral cone $Q\subseteq \R^4$, namely the polyhedral cone defined by the linear factors of the defining inequalities in the right hand side of Equation \eqref{eq:same sign conditions}. Concretely, $Q\subset \R^4$ is defined by the linear inequalities $q_4\geq 0$ and $\mathbf{n}_i^\top \mathbf{q} \geq 0$ ($i=1,2,\ldots,m$). Note that $\P(Q) = \P(Q\cup (-Q))$.
\end{remark}

\begin{remark}
Specializing \Cref{thm:chiral set of an arrangement} to one camera, we get
$D_{\{A\}} = \{\mathbf{q}\in\P^3\colon (\mathbf{n}_A^\top \mathbf{q}) (\mathbf{n}_\infty^\top \mathbf{q}) \geq 0\}$, which implies that $\chi(\mathbf{q};\{A\}) = 1$ if $\mathbf{q} \in L_\infty \cup L_A$ for one camera $A$. This specialization matches our expectation for one camera that  
we explained earlier. 
\end{remark}

We now give a criterion for the non-emptiness of $D_\mathcal{A}$ in terms of linear programming.

\begin{theorem}\label{lem:chiralexistence}
Let $\mathcal{A} = \{A_1,A_2,\ldots,A_m\}$ be an arrangement of finite cameras.
Then $D_\mathcal{A} \neq \emptyset$ if and only if the row space of the $4 \times (m+1)$ matrix $N$ with columns ${\mathbf{n}}_1, \ldots, {\mathbf{n}}_m, {\mathbf n}_\infty$ intersects the positive orthant 
 $\R^{m+1}_{++}$.

In particular, for all arrangements of $m \le 3$ cameras such that $\mathbf{n}_\infty$ and the principal rays $\mathbf{n}_1, \dots, \mathbf{n}_m$ are linearly independent, $D_\mathcal{A} \neq \varnothing$.
\end{theorem}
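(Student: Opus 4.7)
The plan is to unpack the definition of $D_\mathcal{A}$ into a system of strict linear inequalities on $\R^4$ and then recognize that system as a feasibility question about the row space of $N$.

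First, I would use the observation (noted just before \Cref{thm:chiral set of an arrangement}) that $D_\mathcal{A}$ is nonempty if and only if it has nonempty interior, so nonemptyness is equivalent to the existence of a finite point $\mathbf{q} \in \P^3$ with $\operatorname{depth}(\mathbf{q}; A_i) > 0$ for every $i$. By the formula \eqref{def:depth}, the strict positivity of $\operatorname{depth}(\mathbf{q}; A_i)$ amounts to $\mathbf{n}_i^\top \mathbf{q}$ and $\mathbf{n}_\infty^\top \mathbf{q}$ having the same nonzero sign; replacing $\mathbf{q}$ by $-\mathbf{q}$ if necessary (this does not change the projective point), we may assume $\mathbf{n}_\infty^\top \mathbf{q} > 0$. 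Thus $D_\mathcal{A} \neq \emptyset$ if and only if there exists $\mathbf{q} \in \R^4$ with $\mathbf{n}_i^\top \mathbf{q} > 0$ for all $i=1,\ldots,m$ and $\mathbf{n}_\infty^\top \mathbf{q} > 0$.

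Next, I would observe that, with $N$ the $4 \times (m+1)$ matrix whose columns are $\mathbf{n}_1,\ldots,\mathbf{n}_m,\mathbf{n}_\infty$, the vector whose entries are $\mathbf{n}_i^\top \mathbf{q}$ and $\mathbf{n}_\infty^\top \mathbf{q}$ is exactly $N^\top \mathbf{q}$, and the set $\{N^\top \mathbf{q} : \mathbf{q} \in \R^4\}$ is precisely the row space of $N$. The feasibility condition from the previous paragraph therefore reads: the row space of $N$ meets $\R^{m+1}_{++}$. This is exactly the equivalence claimed in \Cref{lem:chiralexistence}.

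For the ``in particular'' clause, I would argue as follows. If $m \le 3$ and $\mathbf{n}_1,\ldots,\mathbf{n}_m,\mathbf{n}_\infty$ are linearly independent, then $N$ has $m+1 \le 4$ linearly independent columns, so $\operatorname{rank}(N) = m+1$ and the row space of $N$ is all of $\R^{m+1}$. In particular it contains the all-ones vector, hence meets $\R^{m+1}_{++}$, and the first part of the theorem yields $D_\mathcal{A} \neq \emptyset$. There is no serious obstacle here: the entire argument is a direct algebraic rewriting of the existence of a strictly chiral finite point, and the main subtlety, already handled in the preceding theorem, is the sign-swap needed to pass between $\mathbf{q}$ and $-\mathbf{q}$ when normalizing $\mathbf{n}_\infty^\top \mathbf{q}$ to be positive.
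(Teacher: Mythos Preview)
Your proof is correct and follows essentially the same approach as the paper: both reduce $D_\mathcal{A}\neq\emptyset$ to the existence of $\mathbf{q}\in\R^4$ with $\mathbf{n}_i^\top\mathbf{q}>0$ for all $i$ and $\mathbf{n}_\infty^\top\mathbf{q}>0$ (after a sign swap), identify this with $N^\top\mathbf{q}\in\R^{m+1}_{++}$, and then note that linear independence forces the row space to be all of $\R^{m+1}$ when $m\le 3$. Your exposition of the sign-swap step is in fact a bit more explicit than the paper's.
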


\begin{proof}
The set $D_\mathcal{A} \neq \varnothing$ if and only if there is a finite point with positive depth in all 
cameras. Equivalently, if and only if there is a $\mathbf{q}\in\R^4$ such that $q_4 \neq 0$ where  $\mathbf{q}^\top N = [\mathbf{q}^\top \mathbf{n}_1, \ldots, \mathbf{q}^\top \mathbf{n}_m, q_4]$ lies in the positive or negative orthant. Thus $D_\mathcal{A} \neq \emptyset$ if and only if the row space of $N$ has an  intersection with $\R^{m+1}_{++}$. 

If $m\le 3$ and the columns of $N$ are linearly independent then $N$ has row  rank $m+1$, and the rows of $N$ span 
$\R^{m+1}$. So the rowspace of $N$ intersects $\R^{m+1}_{++}$. 
\qed\end{proof}

The following example shows that the three camera result in Theorem~\ref{lem:chiralexistence} is tight in the sense that when $m > 3$, the chiral domain may be empty.

\begin{example}
Consider an arrangement $\mathcal{A}$ with principal rays $\mathbf{n}_{1} = (-1,0,0,0)^\top, \mathbf{n}_2 = (1,-1,0,0)^\top, \mathbf{n}_3 = (0,1,-1,0)^\top,$ and $\mathbf{n}_4 =(0,0,1,-1)^\top$. The matrix 
\[
N = \begin{bmatrix} -1 & 1& 0 & 0 & 0 \\ 0 & -1 & 1 & 0 & 0 \\ 0&0&-1&1 &0 \\ 0&0&0&-1&1  \end{bmatrix} 
\]
has full rank. However, the row space of $N$ has empty intersection with $\R_{++}^5$, so  $D_\mathcal{A}$ is empty. 
\end{example}

\Cref{lem:chiralexistence} provides an efficient method for checking if  $D_\mathcal{A}$ is nonempty by checking the feasibility of a linear program (see \Cref{rem:LP check}), whose size scales linearly with the number of cameras.

\section{The Chiral Joint Image}
\label{sec:chiraljointimage}
%The set of images of world points in the chiral domain of a camera arrangement $\mathcal{A}$ is precisely the part of the world that is visible in the cameras in $\mathcal{A}$. Our goal in this section is to give an algebraic description of this set, which we called the {\em chiral joint image} of $\mathcal{A}$, and denote by $\mathcal{X}_\mathcal{A}$.

%The chiral joint image $\mathcal{X}_\mathcal{A}$ is contained in the {\em joint image} of $\mathcal{A}$, which we will denote by $\mathcal{J}_\mathcal{A}$. 

Recall that world points are imaged in an arrangement of finite cameras $\mathcal{A}=\{A_1,\ldots,A_m\}$ via the 
rational map\footnote{Again, the broken arrow ($\dashrightarrow$) and the words ``rational map" refer to the fact that the domain of the map $\varphi_\mathcal{A}$ is not $\P^3$ but rather $\P^3\setminus\{\mathbf{c}_1,\ldots,\mathbf{c}_m\}$.}
\begin{align}
\varphi_\mathcal{A}: \left\{
\begin{array}{l}
\P^3 \dashrightarrow (\P^2)^m \\
\mathbf{q} \mapsto (A_1\mathbf{q},A_2\mathbf{q},\ldots,A_m\mathbf{q})
\end{array}
\right.
\end{align}
%that sends world points to their images in the cameras of $\mathcal{A}$. 
%It is not defined in the centers of the cameras. 
%We denote the image of $\P^3$ under $\varphi_A$ by $\varphi_\mathcal{A}(\P^3)$.
Triggs calls $\varphi_\mathcal{A}(\P^3) =: \mathcal{J}_\mathcal{A}$ the joint image~\cite{triggs1995geometry,triggs95} and Heyden-\AA str\"{o}m call it the {\em natural descriptor}~\cite{HA97}. In this section we will describe the chiral analog of the joint image, i.e. the set of images of points that lie in front of an arrangement of cameras.
\begin{definition}[Chiral Joint Image]
The \emph{chiral joint image} of a camera arrangement $\mathcal{A}$ is 
$\mathcal{X}_\mathcal{A} := \varphi_\mathcal{A}(D_\mathcal{A})$, the image of the chiral domain of $\mathcal{A}$ under $\varphi_\mathcal{A}$.
\end{definition}

Our goal will be to get an 
algebraic description of the chiral joint image $\mathcal{X}_\mathcal{A}$ and its 
Euclidean closure.
Since $\mathcal{X}_\mathcal{A}$ lies in the joint image $\mathcal{J}_\mathcal{A}$, we begin by 
looking at $\mathcal{J}_\mathcal{A}$ and its closures.

A {\em variety} in $\P^n$ 
is the set of solutions to a finite set of homogeneous polynomial equations. The varieties in $\P^n$ are the closed sets of the 
Zariski topology on $\P^n$.
The Zariski closure of a set $S \subset \P^n$, denoted by $\overline{S}^{Zar}$ is the smallest 
variety containing $S$.  

Closure in the Zariski topology is not just a mathematical nicety. In order to compute with a set, one needs a representation. By passing to its Zariski closure, we get the smallest algebraically representable set containing $\mathcal{J}_\mathcal{A}$.

Trager et al. \cite{THP15} refer to $\overline{\mathcal{J}}_\mathcal{A}^{Zar}$ 
in $(\P^2)^m$ as the {\em joint image variety of $\mathcal{A}$}. Recall that the {\em epipolar} and {\em trifocal} constraints cut out the joint image variety~\cite{idealsofthemultiviewvariety}. These are known as 
the {\em multiview constraints} on the image points.  Trager et al. also characterize the points added to $\mathcal{J}_\mathcal{A}$ 
by the closure operation, that is the difference between $\mathcal{J}_\mathcal{A}$ and $\overline{\mathcal{J}}_\mathcal{A}^{Zar}$. They do this using the following sets.

\begin{definition} \label{def:EA}
Given an arrangement of cameras  $\mathcal{A} = \{A_1,\ldots,A_m\}$, let  $E_j = \mathbf{e}_{1j} \times \hdots \P^2_j \hdots \times \mathbf{e}_{mj}$, 
where $\P^2_j$ represents a copy of $\P^2$ in the $j$th slot. Set $E_\mathcal{A} =  \bigcup_{j=1}^m E_j$, where $A_i \mathbf{c}_j = \mathbf{e}_{ij}$ are the epipoles of $\mathcal{A}$.
\end{definition}

\begin{theorem}[{\cite[Proposition~1]{THP15}}]\label{thm:thp15prop1}
Given an arrangement of cameras  $\mathcal{A} = \{A_1,\ldots,A_m\}$, with distinct camera centers, \[
%\overline{\varphi_\mathcal{A}(\P^3)}^{Zar} = \varphi_\mathcal{A}(\P^3)\cup E_\mathcal{A}.
\overline{\mathcal{J}}_\mathcal{A}^{Zar} = \mathcal{J}_\mathcal{A} \cup E_\mathcal{A}.
\]
\end{theorem}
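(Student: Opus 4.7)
The plan is to prove the two inclusions $\mathcal{J}_\mathcal{A}\cup E_\mathcal{A}\subseteq\overline{\mathcal{J}}_\mathcal{A}^{Zar}$ and $\overline{\mathcal{J}}_\mathcal{A}^{Zar}\subseteq\mathcal{J}_\mathcal{A}\cup E_\mathcal{A}$ separately, using the closure of the graph of $\varphi_\mathcal{A}$ in $\P^3\times(\P^2)^m$ as the main device. For the first inclusion, containment of $\mathcal{J}_\mathcal{A}$ is immediate. To place a point $(\mathbf{e}_{1j},\ldots,\mathbf{p},\ldots,\mathbf{e}_{mj})\in E_j$ (with $\mathbf{p}$ in the $j$th slot) in $\overline{\mathcal{J}}_\mathcal{A}^{Zar}$, I would pick $\mathbf{v}\in\R^4$ with $A_j\mathbf{v}=\mathbf{p}$, which is possible because $A_j$ has rank $3$, and choose $\mathbf{v}$ generically so that the line $\mathbf{c}_j+t\mathbf{v}$ misses the other centers. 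A direct computation gives $\varphi_\mathcal{A}(\mathbf{c}_j+t\mathbf{v})=(\mathbf{e}_{1j}+tA_1\mathbf{v},\ldots,tA_j\mathbf{v},\ldots,\mathbf{e}_{mj}+tA_m\mathbf{v})$, whose $i$th entry tends to $\mathbf{e}_{ij}$ in $\P^2$ for $i\neq j$ as $t\to 0$, while the $j$th entry remains $A_j\mathbf{v}\sim\mathbf{p}$. This exhibits the chosen point as a Euclidean limit of points in $\mathcal{J}_\mathcal{A}$, and hence as a point of $\overline{\mathcal{J}}_\mathcal{A}^{Zar}$, since any real polynomial vanishing on $\mathcal{J}_\mathcal{A}$ also vanishes on its Euclidean closure.

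For the reverse inclusion, let $\Gamma\subseteq\P^3\times(\P^2)^m$ be the Zariski closure of the graph of $\varphi_\mathcal{A}$ over $U=\P^3\setminus\{\mathbf{c}_1,\ldots,\mathbf{c}_m\}$, with projections $\pi_1,\pi_2$. Completeness of $\P^3$ makes $\pi_2$ a closed map, so $\pi_2(\Gamma)$ is closed and contains $\mathcal{J}_\mathcal{A}$, hence contains $\overline{\mathcal{J}}_\mathcal{A}^{Zar}$; continuity of $\pi_2$ yields the reverse containment, so $\pi_2(\Gamma)=\overline{\mathcal{J}}_\mathcal{A}^{Zar}$. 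Any point of the closure therefore lifts to some $(\mathbf{q},\mathbf{p}_1,\ldots,\mathbf{p}_m)\in\Gamma$. If $\mathbf{q}\in U$, the graph is already closed in $U\times(\P^2)^m$, forcing $\mathbf{p}_i=A_i\mathbf{q}$ and placing the point in $\mathcal{J}_\mathcal{A}$. If $\mathbf{q}=\mathbf{c}_j$, I would appeal to the bilinear incidence conditions $\mathbf{p}_i\times A_i\mathbf{q}=0$: these polynomials vanish on the graph over $U$, hence on $\Gamma$, and at $\mathbf{q}=\mathbf{c}_j$ they specialize to $\mathbf{p}_i\times\mathbf{e}_{ij}=0$ for $i\neq j$, which pins $\mathbf{p}_i=\mathbf{e}_{ij}$, while the $j$th condition becomes vacuous because $A_j\mathbf{c}_j=0$. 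Consequently $(\mathbf{p}_1,\ldots,\mathbf{p}_m)\in E_j$.

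I expect the main obstacle to be the fiber analysis of $\pi_1\colon\Gamma\to\P^3$ over the camera centers in the reverse inclusion. One has to check that the bilinear incidence relations, which obviously hold on the open part of the graph, extend to all of $\Gamma$ by virtue of being polynomial (and hence closed), and that at $\mathbf{c}_j$ they truly leave the $j$th coordinate unconstrained. The required completeness and closed-map properties are standard over $\C$; over $\R$ one obtains the same conclusion by working with Euclidean limits, justified by the general fact that the Zariski closure of any real set contains its Euclidean closure.
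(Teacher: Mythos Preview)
The paper does not prove this statement itself; it is quoted from \cite{THP15} without argument. So there is no proof here to compare your reverse inclusion against. Your first inclusion, however, is essentially the curve argument the paper does give in the proof of the next theorem (\Cref{thm:euclidean-is-zariski}): there the authors take $\mathbf{v}(s)=(sG_j^{-1}\mathbf{p}_j,0)^\top+\mathbf{c}_j$ and let $s\to 0$, which is your line $\mathbf{c}_j+t\mathbf{v}$ with the particular preimage $\mathbf{v}=(G_j^{-1}\mathbf{p}_j,0)^\top$.

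Your reverse inclusion via the graph closure $\Gamma$ and the bilinear incidence relations $\mathbf{p}_i\times A_i\mathbf{q}=0$ is the standard blowup argument and is sound. One small gap to tighten: your closing remark that over $\R$ ``one obtains the same conclusion by working with Euclidean limits, justified by the general fact that the Zariski closure of any real set contains its Euclidean closure'' helps only with the \emph{first} inclusion (it places Euclidean limit points into the Zariski closure). It does not, by itself, give you the reverse inclusion, because you need that $\pi_2$ is a \emph{Zariski}-closed map. The clean route is to run your graph argument over $\C$ (where completeness of $\P^3$ gives the closed-map property) and then descend: since $\P^3(\R)$ is Zariski-dense in $\P^3(\C)$, the image $\mathcal{J}_\mathcal{A}$ over $\R$ is Zariski-dense in its complexification, so the real Zariski closure is exactly the set of real points of the complex one, and the real points of $\mathcal{J}_{\mathcal{A},\C}\cup E_{\mathcal{A},\C}$ are $\mathcal{J}_\mathcal{A}\cup E_\mathcal{A}$.
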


While the Zariski topology is natural for algebraic sets, it is too coarse for semialgebraic sets. 
 For example, consider the set $C = \{(x,y) \in \mathbb{R}^2 | x^2 + y^2 = 1, x > 0, y > 0\}$, i.e., the unit circle restricted to the positive quadrant. When talking about its closure, one would want to talk about the set $\bar{C} = \{(x,y) \in \mathbb{R}^2 \,|\, x^2 + y^2 = 1, x \geq 0, y \geq 0\}$. However, the Zariski closure of $C$, $\overline{C}^{Zar} = \{(x,y) \in \mathbb{R}^2 | x^2 + y^2 = 1\}$ is the entire unit circle.  Luckily for us, as the following theorem shows, the Euclidean closure
 $\overline{\mathcal{J}}_\mathcal{A}$ 
 and the Zariski closure 
 $\overline{\mathcal{J}}_\mathcal{A}^{Zar}$ 
  of the joint image are the same~\footnote{Recall that the topology we use on $\P^n$ is induced by the Euclidean topology on $\R^{n+1} \minus \{0\}$. This induces a topology on the product of real projective spaces $(\P^2)^m$. Explicitly, a set $U_1 \times U_2 \times \ldots \times U_m \subseteq (\P^2)^m$ is open if and only if the sets $U_i \subseteq \P^2$ are all open sets.}.
 
\begin{theorem}
\label{thm:euclidean-is-zariski}
$\overline{\mathcal{J}}_\mathcal{A} =\overline{\mathcal{J}}_\mathcal{A}^{Zar} =   {\mathcal{J}}_\mathcal{A} \cup E_\mathcal{A}.$
\end{theorem}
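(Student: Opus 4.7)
The plan is to sandwich $\overline{\mathcal{J}}_\mathcal{A}$ between things already understood. Since every Zariski-closed set in $(\P^2)^m$ is also closed in the Euclidean topology (polynomials are continuous, and the quotient topology on $\P^2$ makes projective hypersurfaces Euclidean-closed), the inclusion $\overline{\mathcal{J}}_\mathcal{A} \subseteq \overline{\mathcal{J}}_\mathcal{A}^{Zar}$ is automatic. Together with \Cref{thm:thp15prop1}, which identifies $\overline{\mathcal{J}}_\mathcal{A}^{Zar} = \mathcal{J}_\mathcal{A} \cup E_\mathcal{A}$, the problem reduces to proving the reverse inclusion $\mathcal{J}_\mathcal{A} \cup E_\mathcal{A} \subseteq \overline{\mathcal{J}}_\mathcal{A}$. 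Since $\mathcal{J}_\mathcal{A} \subseteq \overline{\mathcal{J}}_\mathcal{A}$ trivially, the real content is to exhibit every point of $E_\mathcal{A}$ as a Euclidean limit of points of $\mathcal{J}_\mathcal{A}$.

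Fix $j \in \{1,\ldots,m\}$ and a point
\[
\mathbf{p} = (\mathbf{e}_{1j},\ldots,\mathbf{e}_{j-1,j},\mathbf{p}_j,\mathbf{e}_{j+1,j},\ldots,\mathbf{e}_{mj}) \in E_j,
\]
with $\mathbf{p}_j \in \P^2$ arbitrary. The idea is to approach $\mathbf{p}$ along a line through the camera center $\mathbf{c}_j$ chosen so that it images to $\mathbf{p}_j$ in camera $A_j$. Concretely, because $A_j \colon \P^3 \dashrightarrow \P^2$ is surjective off its center, pick $\mathbf{q}^\ast \in \P^3 \setminus \{\mathbf{c}_j\}$ with $A_j \mathbf{q}^\ast = \mathbf{p}_j$ as vectors in $\R^3$, and define
\[
\mathbf{q}_t := \mathbf{c}_j + t\, \mathbf{q}^\ast \in \R^4, \qquad t > 0.
\]
For all sufficiently small $t > 0$, $\mathbf{q}_t$ avoids every camera center, so $\varphi_\mathcal{A}(\mathbf{q}_t)$ is defined.

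Now compute each coordinate of $\varphi_\mathcal{A}(\mathbf{q}_t)$. Since $A_j \mathbf{c}_j = 0$, one has $A_j \mathbf{q}_t = t\, \mathbf{p}_j$, which represents the projective point $\mathbf{p}_j$ for every $t > 0$. For $i \neq j$,
\[
A_i \mathbf{q}_t = A_i \mathbf{c}_j + t\, A_i \mathbf{q}^\ast = \mathbf{e}_{ij} + t\, A_i \mathbf{q}^\ast \longrightarrow \mathbf{e}_{ij} \quad \text{in } \R^3 \text{ as } t \to 0^+.
\]
Because $\mathbf{e}_{ij}$ is a nonzero representative and the quotient map $\R^3 \minus \{0\} \to \P^2$ is continuous, this gives convergence in $\P^2$. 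Componentwise convergence in $(\P^2)^m$ then yields $\varphi_\mathcal{A}(\mathbf{q}_t) \to \mathbf{p}$, proving $\mathbf{p} \in \overline{\mathcal{J}}_\mathcal{A}$.

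The only subtlety, rather than a true obstacle, is verifying convergence in the quotient topology on $(\P^2)^m$; but because the chosen representatives $A_i \mathbf{q}_t$ converge on the nose to the nonzero representatives $\mathbf{e}_{ij}$ in $\R^3$, no renormalization is needed. This completes all three equalities in the statement.
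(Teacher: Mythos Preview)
Your proof is correct and follows essentially the same approach as the paper: both argue that $E_j \subseteq \overline{\mathcal{J}}_\mathcal{A}$ by approaching a point of $E_j$ along a line through $\mathbf{c}_j$ in a direction mapping to $\mathbf{p}_j$ under $A_j$. The paper writes this line explicitly as $\mathbf{v}(s) = (s\,G_j^{-1}\mathbf{p}_j,\,0)^\top + \mathbf{c}_j$, choosing the preimage of $\mathbf{p}_j$ at infinity, whereas you take an arbitrary preimage $\mathbf{q}^\ast$; the limit computation and the sandwich $\mathcal{J}_\mathcal{A} \subseteq \overline{\mathcal{J}}_\mathcal{A} \subseteq \overline{\mathcal{J}}_\mathcal{A}^{Zar} = \mathcal{J}_\mathcal{A} \cup E_\mathcal{A}$ are otherwise identical.
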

\begin{proof}
Recall that $\mathbf{p}=(\mathbf{p}_{1},\mathbf{p}_2,\ldots,\mathbf{p}_m)\in E_j$ is of the form $(\mathbf{e}_{1j},\hdots \mathbf{p}_j, \hdots, \mathbf{e}_{mj})$ for some  $\mathbf{p}_j \in \P^2$. So all coordinates of $\mathbf{p}$ except $\mathbf{p}_j$ are the images of $\mathbf{c}_j = \begin{bmatrix}-G^{-1}_j \mathbf{t}_j\\ 1 \end{bmatrix}$. Consider now the curve 
\[
    \mathbf{v}(s) = \begin{pmatrix}
     s G_j^{-1}\mathbf{p}_j - G_j^{-1}\mathbf{t}_j\\
     1
    \end{pmatrix} = \begin{pmatrix}
     s G_j^{-1}\mathbf{p}_j \\ 0 \end{pmatrix} + \mathbf{c}_j
\]
as $s$ varies over $\R$. Then $\lim_{s \rightarrow 0} \varphi_\mathcal{A}(\mathbf{v}(s)) = \mathbf{p}$, since for $i \neq j$, 
$A_i \mathbf{v}(s) = s G_iG_j^{-1} \mathbf{p}_j + \mathbf{e}_{ij}$ and $A_j \mathbf{v}(s) = s \mathbf{p}_j \sim \mathbf{p}_j$.  
So $\mathbf{p} \in \overline{\mathcal{J}}_\mathcal{A}$, and hence $E_j \subseteq \overline{\mathcal{J}}_\mathcal{A}$. Therefore, by \Cref{thm:thp15prop1}, 
$\overline{\mathcal{J}}_\mathcal{A}^{Zar} \subseteq  \overline{\mathcal{J}}_\mathcal{A}$.
This means that 
\begin{align}
    \mathcal{J}_\mathcal{A} \subseteq \overline{\mathcal{J}}_\mathcal{A}^{Zar} \subseteq  \overline{\mathcal{J}}_\mathcal{A}
    \end{align}
and taking Euclidean closure throughout and 
noting that Zariski closed sets are also closed in the Euclidean topology, we get 
the first equality. The second equality is \Cref{thm:thp15prop1}.
\qed
\end{proof}

\begin{remark}
In general, over the complex numbers, the Euclidean closure and the Zariski closure of a \ constructible set are equal~\cite[Theorem I.10.1]{mumford1996red}. 
~\Cref{thm:euclidean-is-zariski}, however, is about real numbers, and the real Euclidean closure of a set is not always equal to the real part of the complex Euclidean closure. This is because, the Euclidean closure of the complex points recovers the real points and can sometimes produce isolated real points (real points that can be separated from the real points in the constructible set). For example, consider the map $(a:b) \mapsto (a^3+ab^2:a^2b+b^3:a^3)$ from $\mathbb{P}^1$ to $\mathbb{P}^2$. The image is a rational curve in $\mathbb{P}^2$ defined by the equation $y^2z = x^2(x-z)$ and has an isolated real singularity at $(0:0:1)$, which is not in the closure of the image of $\mathbb{P}^1$. Our proof of~\Cref{thm:euclidean-is-zariski} rests on the multi-linearity of the image formation map.
\end{remark}

We will now focus on giving a semialgebraic description of the chiral joint image $\mathcal{X}_\mathcal{A}$ and its Euclidean closure.
The essential inequalities enforcing chirality in the space of images are given in the following definition. 
\begin{definition}
\label{def:c_a}
Given an arrangement of finite cameras $A_i = \begin{bmatrix} G_i & \mathbf{t}_i\end{bmatrix}$, define $C_\mathcal{A}$ to be the set 
%of all points $\mathbf{p} \in (\P^2)^m$ satisfying the following inequalities:
\begin{equation*}\label{eq:ca}
\left\{ \mathbf{p}:\,
\begin{array}{l}
      \det(G_i)p_{i3} (\mathbf{a}_i\times \mathbf{a}_j)^\top (\mathbf{b}_{ij} \times \mathbf{a}_j) \geq 0, \\
      \det(G_i)\det(G_j) p_{i3} p_{j3} (\mathbf{b}_{ij} \times \mathbf{a}_i)^\top(\mathbf{b}_{ij}\times \mathbf{a}_j) \geq 0
\end{array} \right\}
\end{equation*}
where $\mathbf{p} = (\mathbf{p}_1 ,\mathbf{p}_2,\ldots,\mathbf{p}_m)  \in(\P^2)^m$, $\mathbf{b}_{ij} = G_i^{-1}\mathbf{t}_i - G_j^{-1}\mathbf{t}_j$ is a direction of the baseline connecting the centers of cameras $A_i$ and $A_j$, and $\mathbf{a}_i = G_i^{-1} \mathbf{p}_i$.
\end{definition}

The inequalities describing $C_\mathcal{A}$ come from those describing the chiral domain $D_\mathcal{A}$. See proof of  \Cref{lem:worldpointCa2} in 
\Cref{sec:appendix}.

%\begin{figure*}[ht]
%\begin{subfigure}{.25\linewidth}
%  \centering
%  \includegraphics[width=\linewidth]{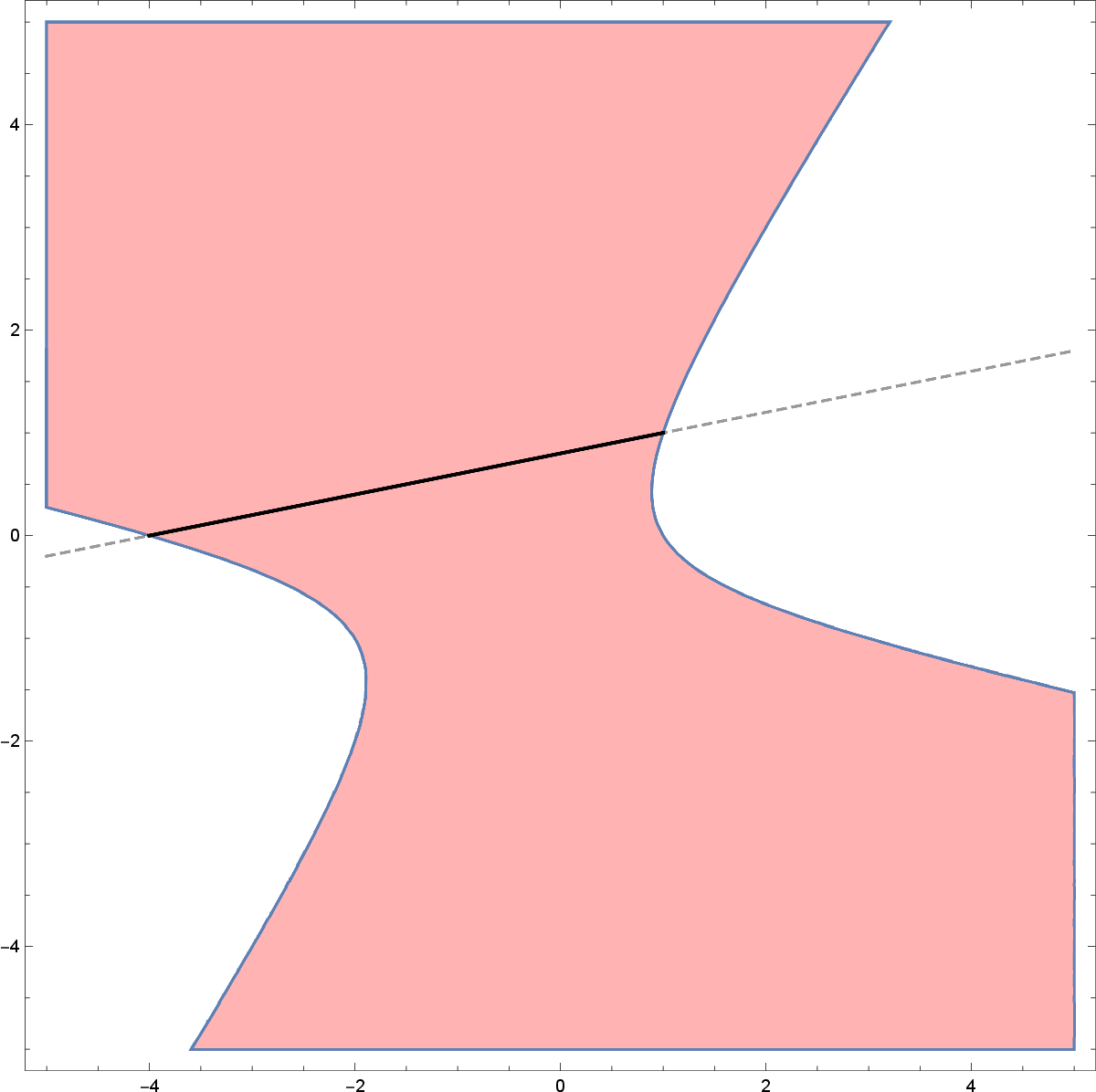}
%\caption{The red region satisfies \ $(\mathbf{a}_1 \times \mathbf{a}_2)^\top (\mathbf{b}_{12} \times \mathbf{a}_2)\ge 0$.}
%\end{subfigure}
%\begin{subfigure}{.25\linewidth}
%  \centering
%  \includegraphics[width=\linewidth]{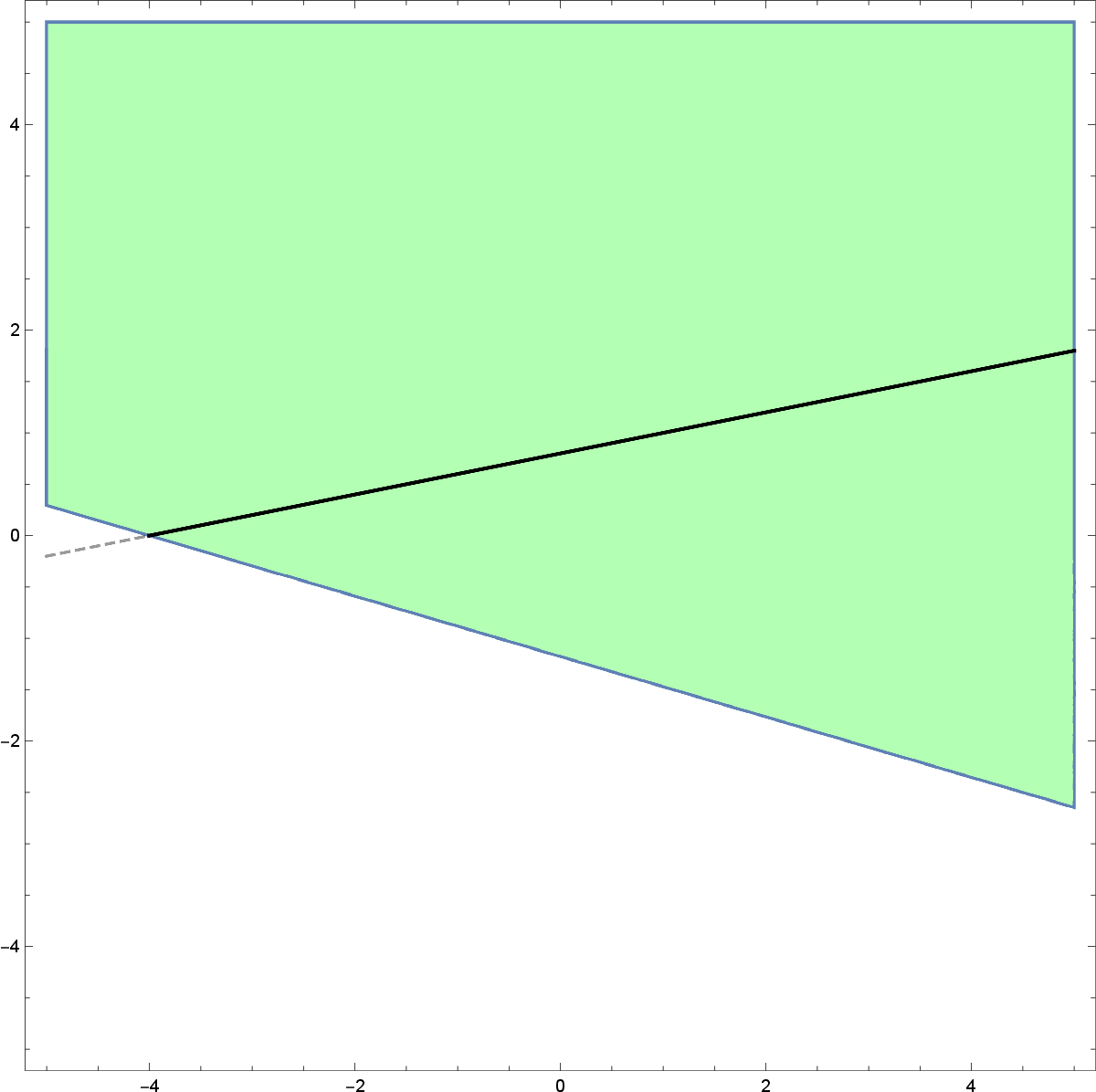} 
%  \caption{Green region satisfies $(\mathbf{a}_1 \times \mathbf{a}_2)^\top (\mathbf{b}_{12} \times \mathbf{a}_1)\ge 0$. }
%% \label{}
%\end{subfigure}
%% \hspace{0.05\textwidth}
%\begin{subfigure}{.25\textwidth}
%  \centering
%  \includegraphics[width=\linewidth]{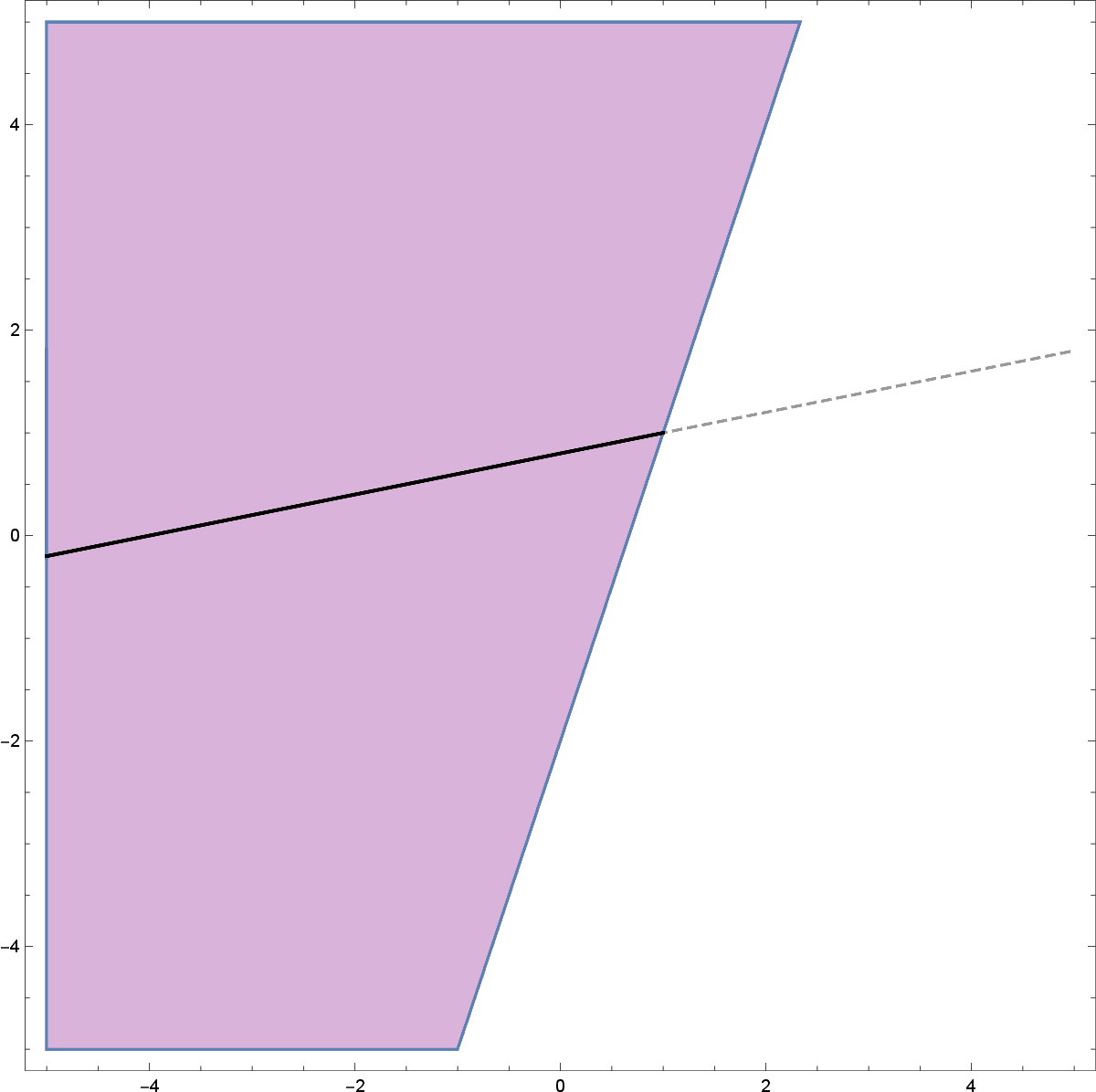}
%\caption{Purple region satisfies $(\mathbf{b}_{12} \times \mathbf{a}_1)^\top (\mathbf{b}_{12} \times \mathbf{a}_2)\ge 0$.}
%\end{subfigure}
%% \hspace{0.05\textwidth}
%\begin{subfigure}{.25\textwidth}
%  \centering
%  \includegraphics[width=\linewidth]{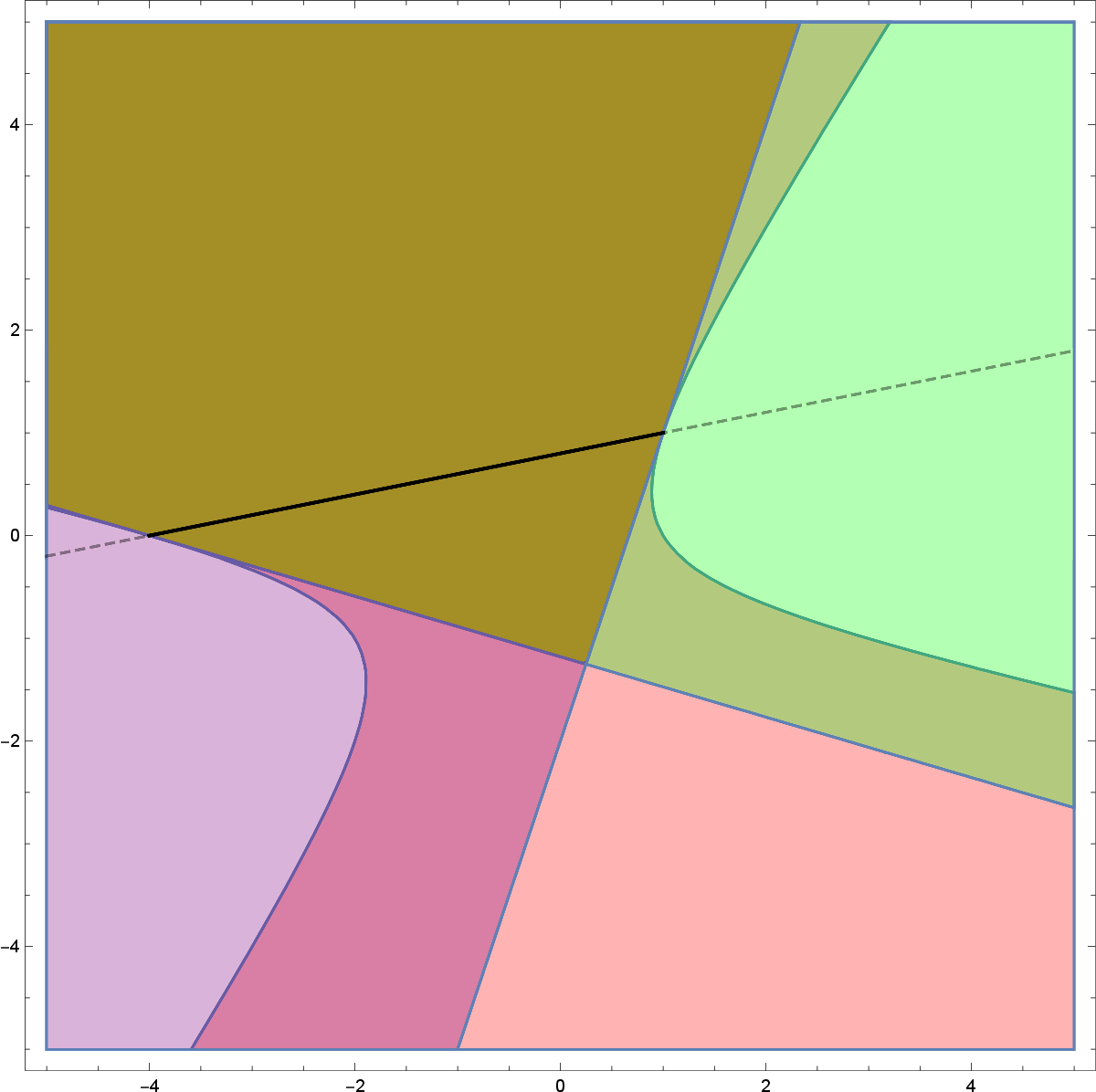} 
%  \caption{Intersection of all regions satisfies all three inequalities.}
%  \label{subfig:ca}
%\end{subfigure}

\begin{figure*}[h]


\begin{subfigure}{.49\textwidth}
  \centering
  \includegraphics[width=\linewidth]{region1.eps}
\caption{The red region satisfies the inequality \\ \centering $(\mathbf{a}_1 \times \mathbf{a}_2)^\top (\mathbf{b}_{12} \times \mathbf{a}_2)\ge 0$.}
  \centering
 \includegraphics[width=\linewidth]{region2.eps} 
  \caption{The green region satisfies the inequality  \\ \centering $(\mathbf{a}_1 \times \mathbf{a}_2)^\top (\mathbf{b}_{12} \times \mathbf{a}_1)\ge 0$. }
\end{subfigure}
\begin{subfigure}{.49\textwidth}
  \centering
 \includegraphics[width=\linewidth]{region3.eps}
\caption{The purple region satisfies the inequality  \\ \centering $(\mathbf{b}_{12} \times \mathbf{a}_1)^\top (\mathbf{b}_{12} \times \mathbf{a}_2)\ge 0$.}
  \centering
  \includegraphics[width=\linewidth]{CA1allregions.eps} 
  \caption{Intersection of all colored regions satisfies all three inequalities.}
  \label{subfig:ca}
\end{subfigure}

% \hspace{0.05\textwidth}
\caption{ An illustration of how the various inequalities defining $C_\mathcal{A}$ (\Cref{def:c_a}) cut out the chiral joint image $\mathcal{X}_\mathcal{A}$ in \Cref{ex:cji}. In each figure, we fix $\mathbf{p}_1 = (-4,0,1)^\top$ and consider corresponding points in the second image. The dashed black line is in the epipolar line $\overline{\mathcal{J}}_\mathcal{A}$. The solid black line segments in figures (a), (b) and (c) is the intersection of the epipolar line with the (colored) region defined by one of the inequalities. The three inequalities are combined in figure (d) and their joint intersection with the epipolar line is shown in solid black. This is the closure of the chiral joint image $\overline{\mathcal{X}}_\mathcal{A}$ with $\mathbf{p}_1 = (-4,0,1)^\top$. }
\label{fig:cjiallregionsseperate}
\end{figure*}

\begin{figure*}[ht]
\begin{subfigure}{.3\textwidth}
  \centering
  \includegraphics[width=\linewidth]{CA1allregions.eps} 
  \caption{ $\mathbf{p}_1 = (-4,0,1)^\top$.}
\label{fig:ca minus four}
\end{subfigure}
\hspace{0.03\textwidth}
\begin{subfigure}{.3\textwidth}
  \centering
  \includegraphics[width=\linewidth]{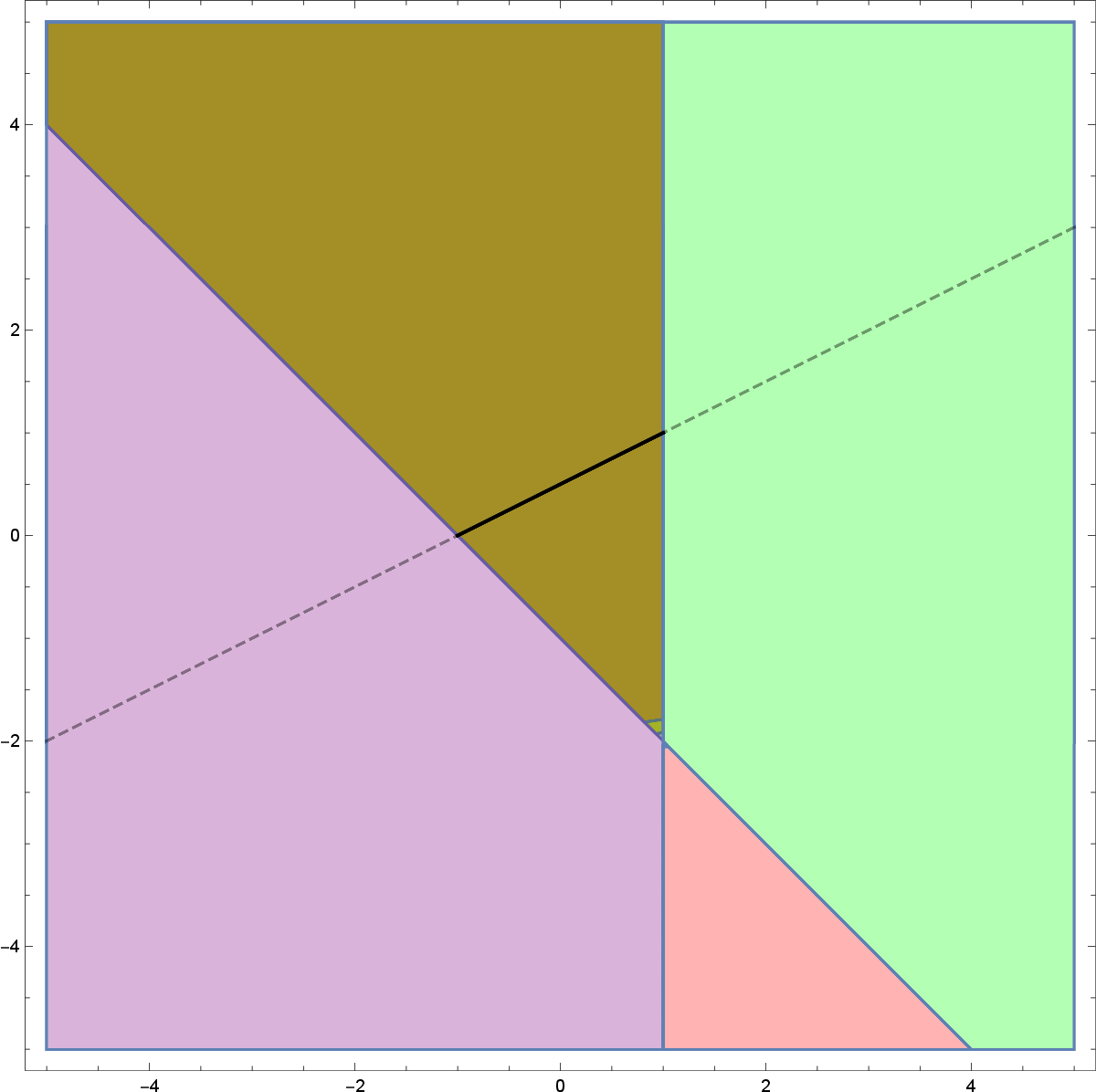}
\caption{ $\mathbf{p}_1 = (-1,0,1)^\top$.}
\end{subfigure}
\hspace{0.03\textwidth}
\begin{subfigure}{.3\textwidth}
  \centering
  \includegraphics[width=\linewidth]{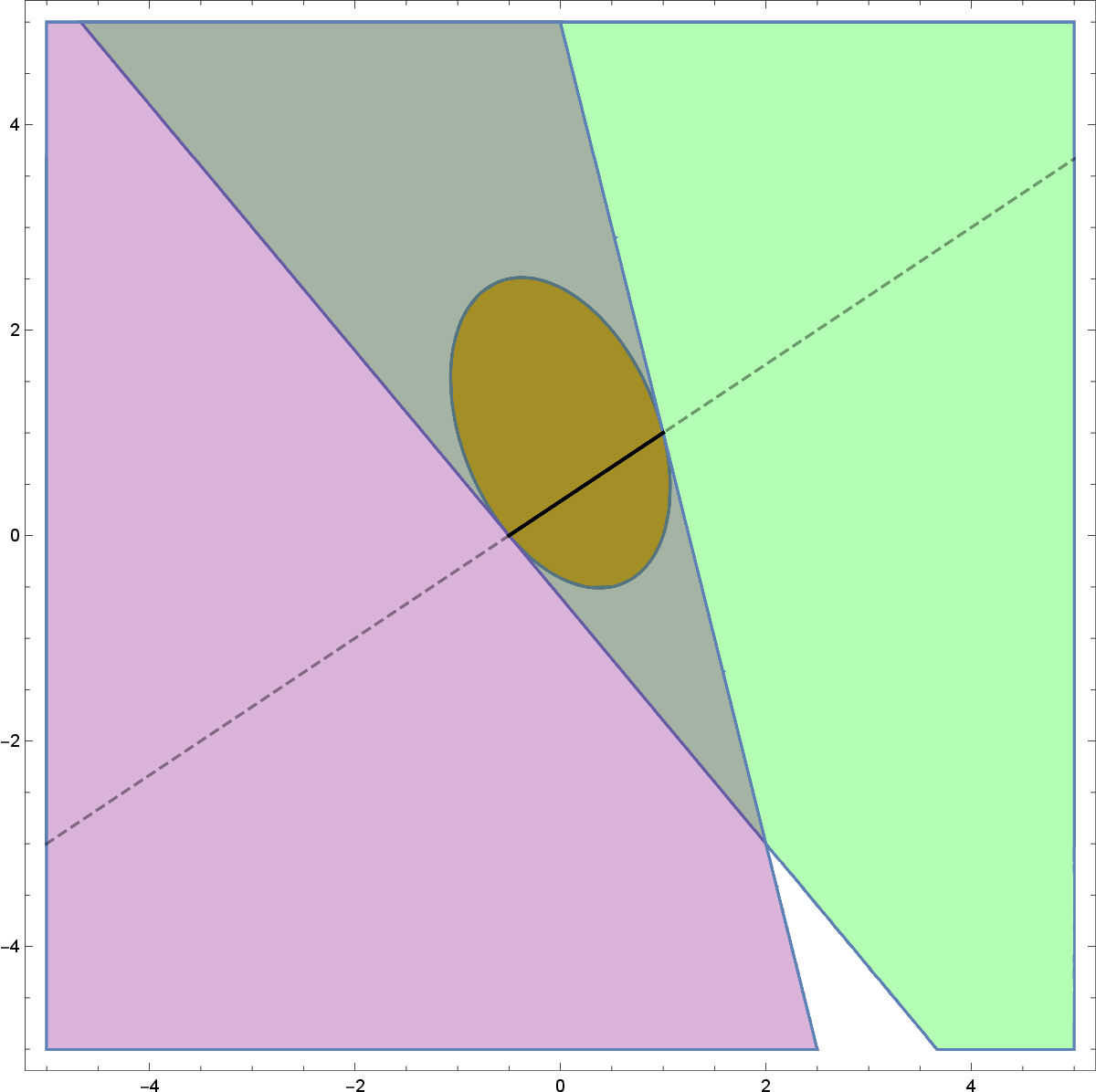} 
  \caption{ $\mathbf{p}_1 = (-.5,0,1)^\top$.}
% \label{}
\end{subfigure}
\hspace{0.04\textwidth}
\begin{subfigure}{.3\textwidth}
  \centering
  \includegraphics[width=\linewidth]{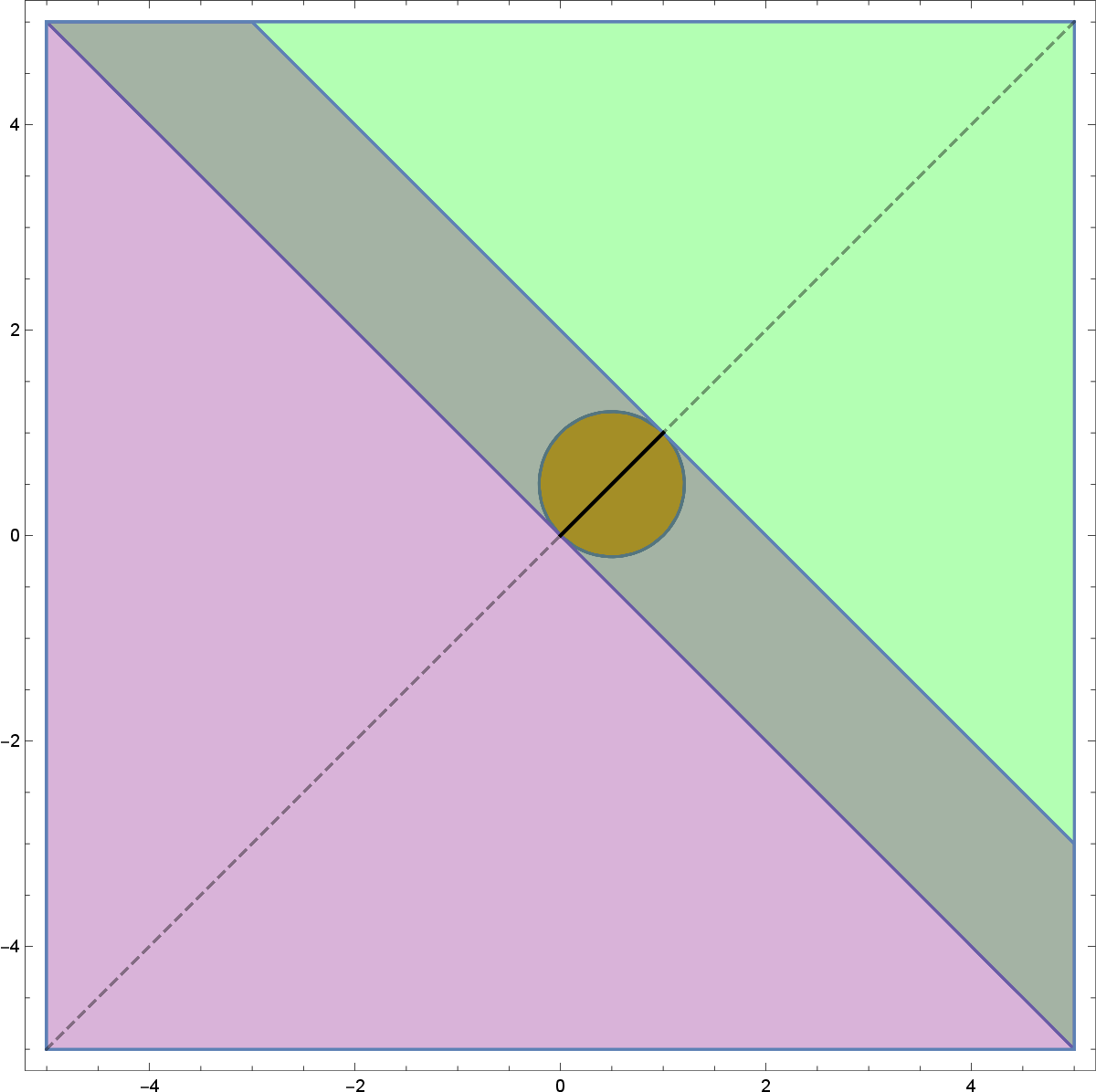}
\caption{ $\mathbf{p}_1 = (0,0,1)^\top$.}
\end{subfigure}
\hspace{0.03\textwidth}
\begin{subfigure}{.3\textwidth}
  \centering
  \includegraphics[width=\linewidth]{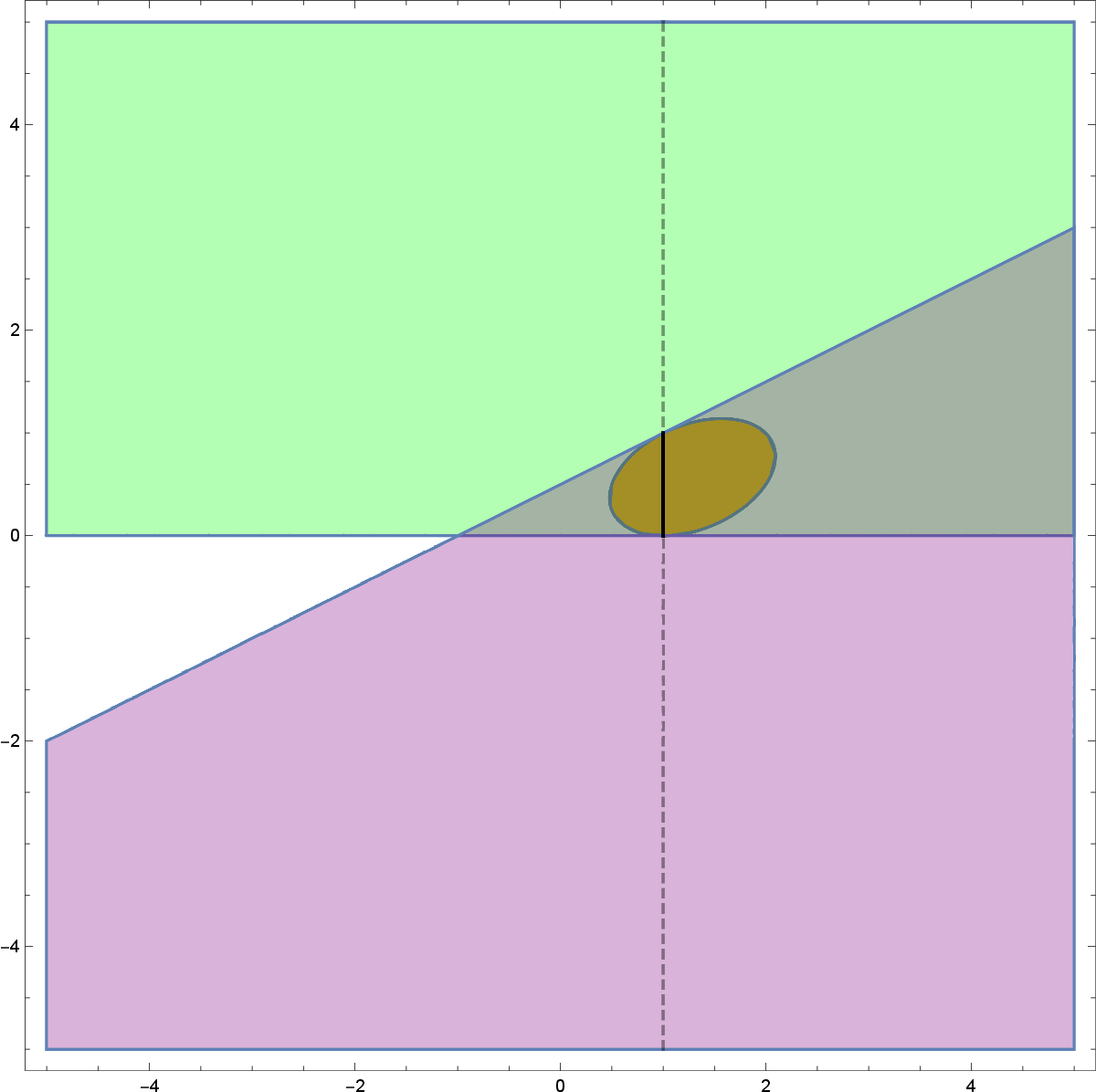}
\caption{ $\mathbf{p}_1 = (1,0,1)^\top$.}
\end{subfigure}
\hspace{0.03\textwidth}
\begin{subfigure}{.3\textwidth}
  \centering
  \includegraphics[width=\linewidth]{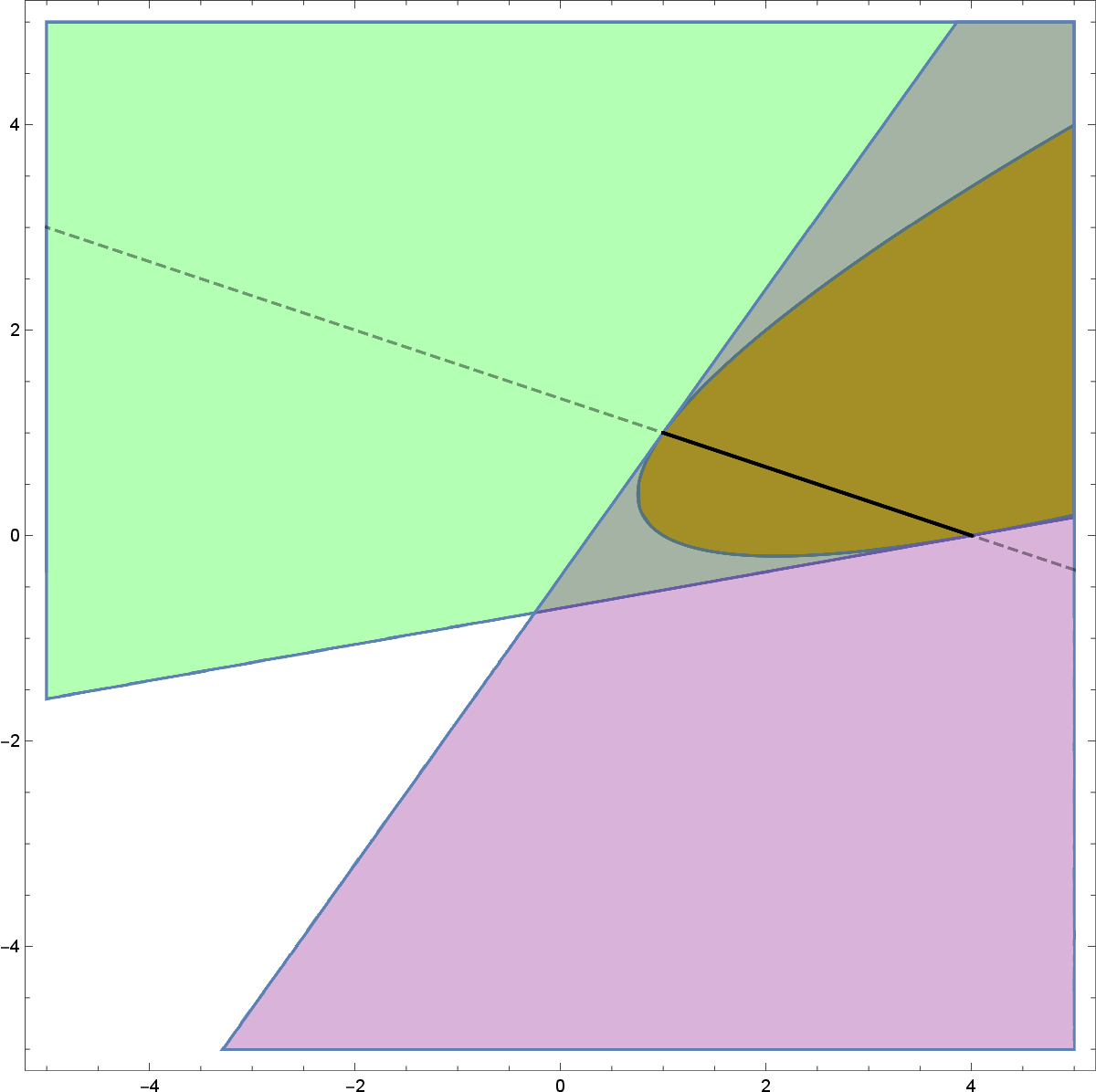}
\caption{ $\mathbf{p}_1 = (4,0,1)^\top$.}
\end{subfigure}
\hspace{0.04\textwidth}
\caption{
The chiral joint image for \Cref{ex:cji} as $\mathbf{p}_1$ varies along a line. The dashed black line is the epipolar line in ($\overline{\mathcal{J}}_\mathcal{A}$). The solid black line segment is in the closure of the chiral joint image ($\overline{\mathcal{X}}_\mathcal{A}$). See \Cref{fig:cjiallregionsseperate} for complete description of the regions. 
\label{fig:cjiallregions} }
\end{figure*}

Note that each inequality in $C_\mathcal{A}$ involves only two cameras in the arrangement. They are well-defined on $(\P^2)^m$ because every inequality has even degree in the coordinates on the $\P^2$-factors. In fact, the inequalities are all biquadratic, i.e.~of degree $(2,2)$. Moreover, the sign does not depend on the choice of the order of the cameras in the arrangement because this choice is implicit in $\mathbf{b}_{ij}$ and explicit in the terms $(\mathbf{a}_i\times \mathbf{a}_j)$ in the inequalities. So a relabeling of the cameras will not change the signs involved.

The following lemma (whose proof can be found in \Cref{sec:appendix}) gives the set theoretic relationship between the joint image, the chiral joint image and the set $C_\mathcal{A}$.  
\begin{lemma}\label{lem:worldpointCa2}
Let $\mathcal{A} = \{A_1,\ldots,A_m\}$ be an arrangement of finite cameras such that $D_\mathcal{A}$ is nonempty. If the centers of $\mathcal{A}$ are not collinear, then  
\begin{align}
\mathcal{X}_\mathcal{A} = \mathcal{J}_\mathcal{A}  \cap C_\mathcal{A}.
\end{align}
If the centers are collinear, then set 
$\mathbf{e} := (\mathbf{e}_1, \dots, \mathbf{e}_m)$ to be the image of the 
common baseline under $\varphi_\mathcal{A}$. 
Then 
\begin{align}
\mathcal{X}_\mathcal{A} \minus \{\mathbf{e}
\} = \left(\mathcal{J}_\mathcal{A}  \cap C_\mathcal{A}\right) \minus  \{\mathbf{e}\}
\end{align}
In both cases, $\overline{\mathcal{X}}_\mathcal{A} \subseteq C_\mathcal{A}$.
\end{lemma}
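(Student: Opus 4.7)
The plan is to set up an explicit triangulation dictionary that expresses the depth-sign conditions defining $D_\mathcal{A}$ in \Cref{thm:chiral set of an arrangement} in terms of the image-side quantities appearing in \Cref{def:c_a}, and then read off both inclusions from this dictionary. For any $\mathbf{p}=(\mathbf{p}_1,\ldots,\mathbf{p}_m)\in\mathcal{J}_\mathcal{A}$ with a finite preimage $\mathbf{q}$, write $A_i\mathbf{q}=\lambda_i\mathbf{p}_i$ and $\mathbf{a}_i := G_i^{-1}\mathbf{p}_i$. Choosing $q_4 = 1$, one obtains $\lambda_i\mathbf{a}_i - \lambda_j\mathbf{a}_j = \mathbf{b}_{ij}$. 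Crossing by $\mathbf{a}_j$ and $\mathbf{a}_i$ yields
\[
\lambda_i\,(\mathbf{a}_i\times\mathbf{a}_j) = \mathbf{b}_{ij}\times\mathbf{a}_j,\qquad
\lambda_j\,(\mathbf{a}_i\times\mathbf{a}_j) = \mathbf{b}_{ij}\times\mathbf{a}_i,
\]
and dotting these against $\mathbf{a}_i\times\mathbf{a}_j$ (when nonzero) extracts $\sign(\lambda_i)$ and $\sign(\lambda_i\lambda_j)$ in exactly the form demanded by $C_\mathcal{A}$. Combined with $\mathbf{n}_i^\top\mathbf{q} = \det(G_i)\lambda_i\,p_{i3}$ and $\mathbf{n}_\infty^\top\mathbf{q}=1$, the defining inequalities of $D_\mathcal{A}$, namely $(\mathbf{n}_\infty^\top\mathbf{q})(\mathbf{n}_i^\top\mathbf{q})\ge 0$ and $(\mathbf{n}_i^\top\mathbf{q})(\mathbf{n}_j^\top\mathbf{q})\ge 0$, translate term-for-term into the first and second families of inequalities defining $C_\mathcal{A}$.

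With this dictionary in hand, the forward inclusion $\mathcal{X}_\mathcal{A}\subseteq \mathcal{J}_\mathcal{A}\cap C_\mathcal{A}$ is immediate for images of finite points of $D_\mathcal{A}$, and the remaining points of $\mathcal{X}_\mathcal{A}$ (those arising from the closure in \Cref{def:chiral domain}) are absorbed by noting that $C_\mathcal{A}$ is closed, being cut out by non-strict polynomial inequalities. For the reverse inclusion, given $\mathbf{p}\in\mathcal{J}_\mathcal{A}\cap C_\mathcal{A}$, I would choose a pair of indices $i,j$ with $\mathbf{a}_i\times\mathbf{a}_j\ne 0$, triangulate $\mathbf{p}$ uniquely to a preimage $\mathbf{q}\in\P^3$, and apply the dictionary in reverse to deduce $\mathbf{q}\in D_\mathcal{A}$, i.e., $\mathbf{p}\in\mathcal{X}_\mathcal{A}$. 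When the camera centers are not collinear, such a pair is always available; when they are collinear the only $\mathbf{p}\in\mathcal{J}_\mathcal{A}$ for which \emph{no} such pair exists is the common baseline image $\mathbf{e}$, which is therefore excised from both sides of the identity.

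The closure statement $\overline{\mathcal{X}}_\mathcal{A}\subseteq C_\mathcal{A}$ then follows at once from $\mathcal{X}_\mathcal{A}\subseteq C_\mathcal{A}$ (in the collinear case, from $\mathcal{X}_\mathcal{A}\setminus\{\mathbf{e}\}\subseteq C_\mathcal{A}$, together with the observation that $\mathbf{e}$, when it lies in $\mathcal{X}_\mathcal{A}$, is a limit of images of finite chiral baseline points and hence lies in the closed set $C_\mathcal{A}$), so taking closure on the left keeps us inside the closed set on the right.

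The main obstacle is the careful handling of the degenerate locus $\mathbf{a}_i\times\mathbf{a}_j = 0$, where the triangulation identities break down. In the non-collinear setting this locus for a fixed pair is a proper subset of $(\P^2)^m$, and one can rescue the argument by switching to a different pair of cameras or by a limit argument from finite non-baseline points; in the collinear setting all pairs degenerate simultaneously exactly at $\mathbf{e}$, which is why the statement must excise that point. Verifying that every remaining boundary case — points at infinity, points on principal planes, and (in the non-collinear setting) points on individual baselines — can be realized as the limit of interior points of $D_\mathcal{A}$ whose images stay inside $C_\mathcal{A}$ is where the detailed bookkeeping of the proof concentrates.
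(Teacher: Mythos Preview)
Your approach is essentially the paper's: both set up the same cross-product dictionary from the identity $\lambda_i\mathbf{a}_i-\lambda_j\mathbf{a}_j=q_4\mathbf{b}_{ij}$ and translate the sign conditions of \Cref{thm:chiral set of an arrangement} into the inequalities of \Cref{def:c_a}. The paper isolates the sign identities in a preparatory lemma and then proves the set equality by two explicit combinatorial observations about noncollinear centers (for every $i$ there is a $j$ with $\mathbf{q}\notin l_{ij}$; for every pair with $\mathbf{q}\in l_{ij}$ there is a third $k$ off both baselines), which is exactly what your ``switching pairs'' remark is gesturing at.

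There is one concrete gap in your sketch. By normalizing to $q_4=1$ and crossing only with $\mathbf{a}_i$ and $\mathbf{a}_j$, your dictionary is valid only when $\mathbf{a}_i\times\mathbf{a}_j\neq 0$. But when the unique preimage $\mathbf{q}$ is at infinity, \emph{every} pair satisfies $\mathbf{a}_i\times\mathbf{a}_j=0$ (since then $\lambda_i\mathbf{a}_i=\lambda_j\mathbf{a}_j$), so ``switching pairs'' does not rescue the reverse inclusion there, and your limit heuristic would require an additional argument that the approximating finite preimages can be chosen inside $C_\mathcal{A}$. The fix is to add the third cross product: crossing $\lambda_i\mathbf{a}_i-\lambda_j\mathbf{a}_j=q_4\mathbf{b}_{ij}$ with $\mathbf{b}_{ij}$ gives $\lambda_i(\mathbf{b}_{ij}\times\mathbf{a}_i)=\lambda_j(\mathbf{b}_{ij}\times\mathbf{a}_j)$ regardless of $q_4$, and hence $\sign(\lambda_i\lambda_j)=\sign\bigl((\mathbf{b}_{ij}\times\mathbf{a}_i)^\top(\mathbf{b}_{ij}\times\mathbf{a}_j)\bigr)$ whenever $\mathbf{q}\notin l_{ij}$. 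This is precisely how the paper handles the $q_4=0$ case, and with it your reverse inclusion goes through directly for infinite preimages as well. Once you incorporate this identity and state the two ``switching'' observations precisely, your argument coincides with the paper's.
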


\Cref{lem:worldpointCa2}, while interesting, is not useful in practice, since it does not give us a way of algebraically representing $\mathcal{X}_\mathcal{A}$. This 
is because it is stated using $\mathcal{J}_\mathcal{A}$ which is not an algebraic set. A more useful description involves $\overline{\mathcal{J}}_\mathcal{A}$ which is algebraic. 

Recall that going from  $\mathcal{J}_\mathcal{A}$  to $\overline{\mathcal{J}}_\mathcal{A}$ brings in the set $E_\mathcal{A}$. For the chiral joint image the relevant part of $E_\mathcal{A}$  is 
    \[
     E^+_\mathcal{A} := \bigcup_{j\vert \mathbf{c}_j\in D_\mathcal{A}} E_j
    \]
    The set $E^+_\mathcal{A}$ can be divided into two parts as follows:
\begin{itemize}
        \item $E_\mathcal{A}^{++}$, the union of the sets $E_j$ such that $\mathbf{c}_j$ has positive depth in every camera $A_i$ with 
        $i \neq j$, and 
        \item $E^0_\mathcal{A}$, the union of all $E_j$ such that 
        $\mathbf{c}_j\in D_\mathcal{A}$ and the 
        depth of $\mathbf{c}_j$ is zero in some camera $A_i$ with $i\neq j$.
        \end{itemize}
Of these, the second set $E^0_\mathcal{A}$ causes the most technical issues as it may intersect (but not be contained in) $\overline{\mathcal{X}}_\mathcal{A}$, whereas it is always contained in $C_\mathcal{A}$. Armed with these definitions we state the main theorem of this section.

\begin{theorem}
\label{thm:chiraljointimageCa}
Let $\mathcal{A}$ be an arrangement of finite cameras with distinct centers. Further, assume that the chiral domain $D_\mathcal{A}$ is nonempty. Let $E^+_\mathcal{A}$ be the union of all $E_j$ such that $\mathbf{c}_j$ lies in $D_\mathcal{A}$.  
If the camera centers are in general position, i.e. $E^0_\mathcal{A} = \emptyset$, then
\begin{equation}
    \overline{\mathcal{J}}_\mathcal{A} \cap C_\mathcal{A}   =   \overline{\mathcal{X}}_\mathcal{A}. \label{eq:generic-chiral-joint-image}
\end{equation}
If $E^0_\mathcal{A} \neq \emptyset$  and the camera centers are not collinear, then
\begin{equation}
\overline{\mathcal{J}}_\mathcal{A} \cap C_\mathcal{A}  = \mathcal{X}_\mathcal{A}\cup E^+_\mathcal{A} =   \overline{\mathcal{X}}_\mathcal{A}\cup E^0_\mathcal{A}. \label{eq:full-chiral-joint-image}
\end{equation}
If the camera centers are collinear then
\begin{align}
\overline{\mathcal{J}}_\mathcal{A}\cap C_\mathcal{A} 
&= \mathcal{X}_\mathcal{A} \cup E^+_\mathcal{A} \cup \{\mathbf{e}\} = \overline{\mathcal{X}}_\mathcal{A} \cup E^0_\mathcal{A} \cup \{\mathbf{e}\}. \label{eq:collinear-chiral-joint-image} 
\end{align}
where $\mathbf{e} = ( \mathbf{e}_1, \ldots, 
\mathbf{e}_m)$ is the image of the common 
baseline under $\varphi_\mathcal{A}$.
\end{theorem}

By \Cref{thm:chiraljointimageCa}, the epipolar and trifocal constraints together with the inequalities defining $C_\mathcal{A}$ are the {\em chiral multiview constraints}. In the generic case they give an explicit semi-algebraic description of the closure of the chiral joint image (\cref{eq:generic-chiral-joint-image}).

Things get complicated when one or more of the cameras lie on the principal plane of another camera, i.e. the set $E^0_{\mathcal{A}}$ is non-empty. In this case the description becomes implicit (\cref{eq:full-chiral-joint-image}). Unfortunately the case of the non-empty $E^0_\mathcal{A}$ is not a pathology. Stereo cameras commonly involve two cameras whose centers are sitting on the common principal plane. More generally planar camera arrays have the same problem. Making the description explicit when $E^0_\mathcal{A} \neq \emptyset$ would require a more refined analysis because only parts of $E^0_\mathcal{A}$ are included in the closed set $\overline{\mathcal{X}_{\mathcal{ A}}}$. Such a refined analysis would be particularly relevant for specific cases like the stereo pair and the planar camera array.

%When the centers of cameras in $\mathcal{A}$ are coplanar, $\overline{\mathcal{J}_\mathcal{A}}$ requires both epipolar and trifocal constraints \cite{idealsofthemultiviewvariety} which respectively involve two and three cameras at a time. In contrast, the inequalities which impose chirality only ever involve two cameras at a time.

Specializations of \Cref{thm:chiraljointimageCa} to Euclidean cameras are straightforward.

The proof of \Cref{thm:chiraljointimageCa} relies on the following additional lemmas, the proofs of which can be found in \Cref{sec:appendix}. 

\begin{lemma}\label{lem:eaplus}
Let $\mathcal{A} = \{A_1,\ldots,A_m\}$ be an arrangement of finite cameras. If the centers $\mathbf{c}_i$ are not collinear then $E_\mathcal{A} \cap C_\mathcal{A} = E^+_\mathcal{A}$.  Otherwise, 
 $E_\mathcal{A} \cap C_\mathcal{A} = E^+_\mathcal{A} \cup \{\mathbf{e}
 \}$.
\end{lemma}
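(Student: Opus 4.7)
The plan is to substitute the coordinates of $\mathbf{p}\in E_k$ into each inequality defining $C_\mathcal{A}$, show that the resulting sign conditions exactly track whether $\mathbf{c}_k\in D_\mathcal{A}$, and identify the residual point $\mathbf{e}$ that the collinear setting adds. The key setup uses the standard lift $\mathbf{c}_k=(-G_k^{-1}\mathbf{t}_k,1)^\top$, which forces the scaling $\mathbf{e}_{ik}=A_i\mathbf{c}_k=G_i\mathbf{b}_{ik}$, hence $\mathbf{a}_i=G_i^{-1}\mathbf{e}_{ik}=\mathbf{b}_{ik}$ for $i\neq k$ and $\det(G_i)p_{i3}=\mathbf{n}_i^\top\mathbf{c}_k$, whose sign is the sign of $\operatorname{depth}(\mathbf{c}_k;A_i)$. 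Applying \Cref{thm:chiral set of an arrangement} to $\mathbf{c}_k$, membership $\mathbf{c}_k\in D_\mathcal{A}$ is then equivalent to $\det(G_i)p_{i3}\geq 0$ for every $i\neq k$.

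Combining these substitutions with the identity $\mathbf{b}_{ij}=\mathbf{b}_{ik}-\mathbf{b}_{jk}$ and case-splitting by whether $i,j$ equal $k$, the two families of inequalities in $C_\mathcal{A}$ collapse as follows. For $i,j\neq k$, both inequalities reduce to $\det(G_i)p_{i3}\|\mathbf{b}_{ik}\times\mathbf{b}_{jk}\|^2\geq 0$ and $\det(G_i)\det(G_j)p_{i3}p_{j3}\|\mathbf{b}_{ik}\times\mathbf{b}_{jk}\|^2\geq 0$. For $j=k$ and $i\neq k$, the first-type inequality becomes $\det(G_i)p_{i3}\|\mathbf{b}_{ik}\times\mathbf{a}_k\|^2\geq 0$. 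Every remaining subcase has a cross-product factor that vanishes and becomes $0\geq 0$. The inclusion $E^+_\mathcal{A}\subseteq E_\mathcal{A}\cap C_\mathcal{A}$ is then immediate: if $\mathbf{c}_k\in D_\mathcal{A}$, all $\det(G_i)p_{i3}$ are nonnegative and every reduced inequality holds regardless of the free coordinate $\mathbf{p}_k$, so $E_k\subseteq C_\mathcal{A}$.

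For the reverse inclusion, suppose $\mathbf{p}\in E_k\cap C_\mathcal{A}$ with $\mathbf{c}_k\notin D_\mathcal{A}$, so some $\det(G_i)p_{i3}<0$. In the non-collinear case, not all centers lie on the line through $\mathbf{c}_i,\mathbf{c}_k$, so there exists $j\neq i,k$ with $\mathbf{b}_{ik}\times\mathbf{b}_{jk}\neq 0$; this violates the reduced first-type inequality, contradicting $\mathbf{p}\in C_\mathcal{A}$. Hence $E_k\cap C_\mathcal{A}\subseteq E^+_\mathcal{A}$. In the collinear case all $\mathbf{b}_{ik}\times\mathbf{b}_{jk}=0$, so only $\det(G_i)p_{i3}\|\mathbf{b}_{ik}\times\mathbf{a}_k\|^2\geq 0$ remains nontrivial; with the negative sign it forces $\mathbf{a}_k\parallel\mathbf{b}_{ik}$, i.e.\ $\mathbf{p}_k=\mathbf{e}_{ki}$. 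Since all centers share the baseline, $\mathbf{e}_{ki}$ is independent of $i\neq k$ and equals the common baseline image $\mathbf{e}_k$, so $\mathbf{p}_k=\mathbf{e}_k$ and $\mathbf{p}=\mathbf{e}$; a direct check using the same vanishing shows $\mathbf{e}\in C_\mathcal{A}$.

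The main technical hurdle is the disciplined bookkeeping: one must commit to a single lift of each epipole so that the sign of $\det(G_i)p_{i3}$ is unambiguous, and then verify the four subcase simplifications of the biquadratic inequalities without error. Once the identities $\mathbf{a}_i=\mathbf{b}_{ik}$ and $\mathbf{b}_{ij}=\mathbf{b}_{ik}-\mathbf{b}_{jk}$ are in hand, the rest is a short sign chase.
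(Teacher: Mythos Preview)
Your argument is correct and follows essentially the same route as the paper. The paper factors the computation through two auxiliary lemmas (\Cref{lem:comp2} and \Cref{lem:epipoleCa}): the first handles the pairs $\{A_i,A_j\}$ with $i,j\neq k$ by reading off $\sign(\lambda_i q_4)$ from the cross-product expressions, and the second handles the pairs $\{A_i,A_k\}$ by computing exactly your identity $\det(G_i)p_{i3}\|\mathbf{b}_{ik}\times\mathbf{a}_k\|^2$. You instead inline both lemmas by substituting $\mathbf{a}_i=\mathbf{b}_{ik}$ and $\mathbf{b}_{ij}=\mathbf{b}_{ik}-\mathbf{b}_{jk}$ directly into the defining inequalities of $C_\mathcal{A}$, arriving at the same reduced forms. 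The non-collinear witness $j$ with $\mathbf{b}_{ik}\times\mathbf{b}_{jk}\neq 0$ and the collinear collapse to $\mathbf{p}_k=\mathbf{e}_k$ match the paper's reasoning exactly. One small point worth making explicit: your equivalence ``$\mathbf{c}_k\in D_\mathcal{A}$ iff $\det(G_i)p_{i3}\geq 0$ for all $i\neq k$'' invokes \Cref{thm:chiral set of an arrangement}, which presumes $D_\mathcal{A}\neq\emptyset$; the paper's proof carries the same implicit hypothesis, and the lemma is only applied under that assumption in \Cref{thm:chiraljointimageCa}.
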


\begin{lemma}
\label{lem:eaplusplus}
Let $\mathcal{A} = \{A_1,\ldots,A_m\}$ be an arrangement of finite cameras.
Then
%Let $E_\mathcal{A}^{++}$ be the union of the sets $E_j$ such that $\mathbf{c}_j$ has positive depth in every camera $A_i\in \mathcal{A}\setminus\{A_j\}$, then % We have 
$E_\mathcal{A}^{++}\subseteq  \overline{\mathcal{X}}_\mathcal{A}$.
\end{lemma}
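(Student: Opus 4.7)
The plan is to reuse the approximating curve from the proof of \Cref{thm:euclidean-is-zariski} and verify that, with an appropriate sign choice for the parameter, it lies in the chiral domain. Fix a point $\mathbf{p} = (\mathbf{e}_{1j},\ldots,\mathbf{p}_j,\ldots,\mathbf{e}_{mj}) \in E_j \subseteq E^{++}_\mathcal{A}$, so that by definition the center $\mathbf{c}_j$ has strictly positive depth in every $A_i$ with $i\neq j$. Consider the curve
\[
\mathbf{v}(s) = \mathbf{c}_j + \begin{pmatrix} s\, G_j^{-1}\mathbf{p}_j \\ 0 \end{pmatrix}, \qquad s \in \R,
\]
exactly as in the proof of \Cref{thm:euclidean-is-zariski}, which already established that $\varphi_\mathcal{A}(\mathbf{v}(s)) \to \mathbf{p}$ in $(\P^2)^m$ as $s\to 0$.

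The key step is to show that for $s$ of appropriate sign and sufficiently small absolute value, $\mathbf{v}(s) \in D_\mathcal{A}$. Note $\mathbf{n}_\infty^\top \mathbf{v}(s) = 1 > 0$ and $A_j\mathbf{v}(s) = s\mathbf{p}_j$, so a direct computation from \eqref{def:depth} gives
\[
\mathbf{n}_j^\top \mathbf{v}(s) = \det(G_j)\, s\, p_{j3}.
\]
For the $i \neq j$ cameras, $\mathbf{n}_i^\top \mathbf{v}(s) \to \mathbf{n}_i^\top \mathbf{c}_j > 0$ as $s\to 0$ (the inequality because $\mathbf{c}_j$ has positive depth in $A_i$), so by continuity there is $\varepsilon > 0$ with $\mathbf{n}_i^\top \mathbf{v}(s) > 0$ for all $i\neq j$ and all $|s|<\varepsilon$. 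Choose the sign of $s$ so that $\sign(\det(G_j)) \cdot s \cdot p_{j3} \geq 0$ (this is trivially achievable: if $p_{j3}=0$ either sign works, otherwise take $s$ of the matching sign). Then $\mathbf{n}_j^\top \mathbf{v}(s) \geq 0$ as well.

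Since $\mathbf{v}(s)$ is finite and, by the choice made above, every product $(\mathbf{n}_\infty^\top \mathbf{v}(s))(\mathbf{n}_i^\top \mathbf{v}(s))$ and $(\mathbf{n}_i^\top \mathbf{v}(s))(\mathbf{n}_k^\top \mathbf{v}(s))$ is nonnegative, \Cref{thm:chiral set of an arrangement} places $\mathbf{v}(s) \in D_\mathcal{A}$ (the hypothesis that $D_\mathcal{A}$ is nonempty is satisfied, as one sees by choosing any $\mathbf{p}_j$ with $p_{j3}\neq 0$ and applying the same construction with strict signs). Moreover, for $|s|$ small and nonzero, $\mathbf{v}(s)$ is distinct from every camera center, so $\varphi_\mathcal{A}(\mathbf{v}(s))$ is defined and lies in $\mathcal{X}_\mathcal{A} = \varphi_\mathcal{A}(D_\mathcal{A})$. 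Letting $s \to 0$ along the chosen sign yields $\mathbf{p} \in \overline{\mathcal{X}}_\mathcal{A}$.

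The only subtle point is handling the boundary case $p_{j3}=0$, where $\mathbf{v}(s)$ has exactly zero depth in $A_j$ for all $s$. There one cannot produce an approximating sequence of \emph{strictly} chiral points using this specific curve, and a naive argument that asks for strict positive depth would fail. The remedy is precisely \Cref{thm:chiral set of an arrangement}: its nonstrict quadratic inequality description of $D_\mathcal{A}$ already absorbs points on principal planes, so $\mathbf{v}(s)$ belongs to $D_\mathcal{A}$ without further perturbation. This reduces the case $p_{j3}=0$ to the same uniform argument as $p_{j3}\neq 0$.
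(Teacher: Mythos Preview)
Your proof is correct and follows essentially the same approach as the paper: both use the identical curve $\mathbf{v}(s) = \mathbf{c}_j + (sG_j^{-1}\mathbf{p}_j,0)^\top$, invoke continuity of depth for the cameras $A_i$ with $i\neq j$, and handle the $j$th camera by choosing the sign of $s$ (with the boundary case $p_{j3}=0$ absorbed by the nonstrict description of $D_\mathcal{A}$). Your write-up is somewhat more explicit in computing $\mathbf{n}_j^\top\mathbf{v}(s)=\det(G_j)\,s\,p_{j3}$ and in justifying that $D_\mathcal{A}\neq\emptyset$, but the argument is the same.
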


% \begin{lemma}\label{lem:worldpointCa2}
% Let $\mathcal{A} = \{A_1,\ldots,A_m\}$ be an arrangement of finite cameras such that $D_\mathcal{A}$ is nonempty. If the centers of $\mathcal{A}$ are not collinear, then  
% \begin{align}
% \mathcal{X}_\mathcal{A} = \mathcal{J}_\mathcal{A}  \cap C_\mathcal{A}.
% \end{align}
% If the centers are collinear, then 
% \begin{align}
% \mathcal{X}_\mathcal{A} \minus \{(\mathbf{e}_1, \dots, \mathbf{e}_m)\} = \left(\mathcal{J}_\mathcal{A}  \cap C_\mathcal{A}\right) \minus  \{(\mathbf{e}_1, \dots, \mathbf{e}_m)\}.
% \end{align}
% In both cases, $\overline{\mathcal{X}}_\mathcal{A} \subseteq C_\mathcal{A}$.
% \end{lemma}

\begin{proof}[of \Cref{thm:chiraljointimageCa}]
We will first prove \Cref{eq:full-chiral-joint-image} which is the case of noncollinear centers. 
By \Cref{thm:euclidean-is-zariski}, $\overline{\mathcal{J}_\mathcal{A}} = \mathcal{J}_\mathcal{A} \cup E_\mathcal{A}$. Therefore, 
$$\overline{\mathcal{J}}_\mathcal{A} \cap C_\mathcal{A} = (\mathcal{J}_\mathcal{A} \cap C_\mathcal{A}) 
\cup (E_\mathcal{A} \cap C_\mathcal{A}) = \mathcal{X}_\mathcal{A} \cup E^+_\mathcal{A}
$$
where the last equality follows from Lemmas \ref{lem:worldpointCa2} and \ref{lem:eaplus}. This proves the 
first equality in \Cref{eq:full-chiral-joint-image}. 

By \Cref{lem:eaplusplus}, 
$\overline{\mathcal{J}}_\mathcal{A} \cap C_\mathcal{A} \subseteq \overline{\mathcal{X}}_\mathcal{A} \cup E^0_\mathcal{A}$ since 
$$\overline{\mathcal{J}}_\mathcal{A} \cap C_\mathcal{A} = \mathcal{X}_\mathcal{A} \cup E^+_\mathcal{A} = \mathcal{X}_\mathcal{A} \cup E^{++}_\mathcal{A} \cup E^0_\mathcal{A} \subseteq \overline{\mathcal{X}}_\mathcal{A} \cup E^0_\mathcal{A}.$$
By \Cref{lem:worldpointCa2}, we get 
$\overline{\mathcal{X}}_\mathcal{A}\subseteq C_\mathcal{A}$.
Therefore, since $\mathcal{X}_\mathcal{A} \subseteq \mathcal{J}_\mathcal{A}$, it follows that $\overline{\mathcal{X}}_\mathcal{A}\subseteq \overline{\mathcal{J}}_\mathcal{A} \cap  C_\mathcal{A}$.
\Cref{lem:eaplus} shows that $E_\mathcal{A}^+$ (so in particular $E_\mathcal{A}^0$) is contained in $C_\mathcal{A}$, but since all of $E_\mathcal{A}$ is inside $\overline{\mathcal{J}}_\mathcal{A}$, we also have $E^0_\mathcal{A} \subseteq \overline{\mathcal{J}}_\mathcal{A} \cap  C_\mathcal{A}$. Therefore, 
$\overline{\mathcal{X}}_\mathcal{A} \cup E^0_\mathcal{A} \subseteq \overline{\mathcal{J}}_\mathcal{A} \cap  C_\mathcal{A}$. Putting both these containments together we get the second equality in 
\Cref{eq:full-chiral-joint-image}. 

The generic case, \Cref{eq:generic-chiral-joint-image} follows by observing that for cameras in general position $E^0_\mathcal{A} = \emptyset$.

The collinear case follows the same proof mechanics as above, utilizing the parts of  Lemmas \ref{lem:worldpointCa2}, \ref{lem:eaplus}, and \ref{lem:eaplusplus} that deal with collinear cameras.
\qed\end{proof}

We now illustrate the chiral joint image with the help of an example.

\begin{example}
\label{ex:cji}
Consider the pair of cameras $A_1 = \begin{bmatrix} I & \mathbf{0}\end{bmatrix}$, $A_2 = \begin{bmatrix} I & \mathbf{t}\end{bmatrix} $ where $\mathbf{t} = (1,1,1)^\top$.  We depict the set $C_\mathcal{A}$ and its intersection with 
$\overline{\mathcal{J}}_\mathcal{A}$ in \Cref{fig:cjiallregionsseperate} and \Cref{fig:cjiallregions}. Fixing $\mathbf{p}_1$, we plot the points on the corresponding epipolar line in the second image satisfying the inequalities defining $C_\mathcal{A}$.  Restricting to $p_{23} = 1$, we observe that one inequality is quadratic in the $\mathbf{p}_2$ factor while the other two inequalities are linear.
\end{example}

%Figures~\ref{fig:cjiallregionsseperate} and~\ref{fig:cjiallregions} illustrate the chiral joint image as the intersection $\overline{\mathcal{X}}_\mathcal{A} = \overline{\mathcal{J}}_\mathcal{A} \cap C_\mathcal{A}$. 

Observe how strong of a constraint chirality is by comparing the length of the epipolar line (dashed black) in each image to the length of the chiral joint image (solid black). 

This observation that chirality can be used to clip epipolar lines was first made by Werner and Pajdla ~\cite[Section 6]{WernerPajdla2001}. They argue geometrically that chirality may be used to restrict the search space for stereo-matching from a full epipolar line to a segment of the line. In practice, however, for an arbitrary pair of two cameras, enforcing chirality in their setting amounts to determining the feasibility of a linear program. Our methods reduce enforcing chirality to evaluating the closed form inequalities which define $C_\mathcal{A}$. In effect our derivation of the chiral joint image amounts to performing quantifier elimination on~\cite[Theorem 4]{WernerPajdla2001}.
Consequently, these inequalities offer a powerful new tool to constrain triangulation and multiview stereo matching algorithms.

\section{Chiral Reconstructions}
\label{sec:chiral-reconstructions}

In this section we 
show that the chiral domain recovers Hartley's seminal results on chirality. Our statements are sharper and more general because the chiral domain also includes points at infinity which were excluded from Hartley's framework. 
As a result we are also able to extend Hartley's results to Euclidean reconstructions.

% We now revisit results of Hartley from 
% \cite{hartley1998chirality} and \cite[Chapter 21]{HartleyZisserman2004} on
% conditions under which a projective reconstruction of a collection of image correspondences 
% \[\mathcal{P} := \{(\mathbf{p}_{1k},\dots,  \mathbf{p}_{mk}) \in (\R^2)^m, \,\,k=1, \ldots, n \}\]
% can be made chiral by a homography of $\P^3$. Hartley proves that deciding 
%when a reconstruction can be made chiral reduces to solving a pair of linear programs.
% We give a conic interpretation of this in $\R^4$ and relate to quasi-affine transformations.
% Furthermore, we show that transforming to a chiral reconstruction naturally gives rise to the algebraic notion of signing a reconstruction. We describe how projective reconstructions that can be signed interpolate between projective and chiral reconstructions. In the special case of two cameras, Hartley shows that reconstructions that can be signed coincide with those that can be made chiral. We construct an 
% example to show that when $m \geq 3$, these sets do not coincide in the sense that 
% signed reconstructions may fail to become chiral.

In \cite{hartley1998chirality} and \cite[Chapter 21]{HartleyZisserman2004} Hartley shows that deciding whether a projective reconstruction of a collection of image correspondences 
\[\mathcal{P} := \{(\mathbf{p}_{1k},\dots,  \mathbf{p}_{mk}) \in (\R^2)^m, \,\,k=1, \ldots, n \}\]
can be made chiral by a homography of $\P^3$ reduces to solving a pair of linear programs. We recover this result using the chiral domain and present it in the language of polyhedral cones. 
This formulation leads to the notion of a {\em signed reconstruction} which we argue interpolates between projective and chiral reconstructions. For two cameras, Hartley proves that a projective reconstruction can be made chiral if and only if it can be signed. Our approach provides a concise algebraic proof of this result. While one direction of the proof is easy, the other direction is presented by Hartley only in \cite{hartley1998chirality} and is quite a bit more complicated than our argument. We then show via an example that the equivalence between signed and chiral reconstructions fails for more than two cameras and 
explain the reason for the gap.

% Our chirality framework in projective space works seamlessly with projective reconstructions and directly leads to Hartley's results. Our approach, based on the chiral domain, leads to algebraically concise arguments based on geometric intuition. In particular, we give a simple polyhedral proof of Hartley's two view result.

\subsection{Projective Reconstructions}
\label{sec:projectiveexistence}

A \emph{projective reconstruction} of $\mathcal{P}$ is a pair $(\mathcal{A}, \mathcal{Q})$ consisting of  
an arrangement of $m$ finite cameras  $\mathcal{A} := \{A_1, \dots, A_m\}$ and a set of $n$ points $\mathcal{Q} := \{ \mathbf{q}_1, \dots, \mathbf{q}_n \} \subseteq \R^4 \minus \{\mathbf{0}\}$ such that $A_i\mathbf{q}_k = w_{ik} \widehat{\mathbf{p}}_{ik}$ for some scalars $w_{ik}$. 
By our definition of the chiral domain, it makes sense to say that the reconstructed scene is {\em in front of} the cameras in $\mathcal{A}$ if and only if the points $\mathbf{q}_k$ are in the chiral domain $D_\mathcal{A}$, which leads to the following definition:

\begin{definition}
\label{def:chiral reconstruction}
A \emph{chiral reconstruction} of $\mathcal{P}$ is a projective reconstruction $(\mathcal{A}, \mathcal{Q})$ of $\mathcal{P}$ such that $\mathbf{q}_k \in D_\mathcal{A}$ for all $k$.
\end{definition}

We call a reconstruction {\em finite} if the points in $\mathcal{Q}$ are finite. Recall that 
cameras are already required to be finite.
 In \cite{hartley1998chirality} and 
\cite{WernerPajdla2001} a {\em finite} projective reconstruction of $\mathcal{P}$ is called a {\em weak realization}, and a {\em finite} chiral reconstruction a {\em strong realization}. 
If there is a projective reconstruction of $\mathcal{P}$, there is always a finite one \cite{existenceprojectivereconstruction}. 
%\textcolor{red}{Chiral reconstructions are called {\em oriented projective reconstructions} in papers that model chirality using oriented projective geometry 
% \cite{werner2003combinatorial}, \cite{werner2003constraint}, \cite{werner2001oriented}.}

% Recall that $\mathbf{n}_i^\top \mathbf{q}_k = \det(G_i) w_{ik}$ for all $i,k$. Since all $\widehat{\mathbf{p}}_{ik}$ have last coordinate $1$, $w_{ik} \neq 0$, and since all cameras are finite $\det(G_i) \neq 0$ for all $i$. Therefore, $\mathbf{n}_i^\top \mathbf{q}_k \neq 0$ for any $i,k$, or equivalently, no point $\mathbf{q}_k \in \mathcal{Q}$ lies on the principal plane of any camera $A_i$. Define $\si_{ik} = \sign(\mathbf{n}_i^\top \mathbf{q}_k) \in \{-1,1\}$ for all $i,k$. 

Consider now the question of when a given projective reconstruction $(\mathcal{A}, \mathcal{Q})$ of $\mathcal{P}$ can be transformed to a  {\em projectively equivalent} reconstruction $(\mathcal{A}H^{-1} , H \mathcal{Q})$ that is chiral, by a homography $H \in \GL_4$ of $\P^3$ where
\begin{align*}
\mathcal{A}H^{-1} : =&  \{A_1H^{-1}, \dots, A_m H^{-1}\}, \\
H\mathcal{Q}:=& \{H\mathbf{q}_1, \dots, H\mathbf{q}_n\}.
\end{align*}

The following lemma, parts of which appear in \cite[Section 5]{hartley1998chirality}, describes the effects of a homography $H$ on a reconstruction. A proof is provided in \Cref{sec:appendix}. Recall that, for the center $\mathbf{c}_A$ of a finite camera $A = \begin{bmatrix} G \,& \,{\mathbf t} \end{bmatrix}$, we choose the representative in $\R^4$ $$\mathbf{c}_A = \begin{bmatrix} -G^{-1} \mathbf{t} \\ 1 \end{bmatrix}.$$

\begin{lemma}
\label{lem:transformations}
Let $A = \begin{bmatrix} G \,& \,{\mathbf t} \end{bmatrix}$ be a finite camera with center $\mathbf{c}_A$.
Let $H \in \GL_4$ with last row $\mathbf{h}^\top$ and 
$\delta := \det(H^{-1})$. Then
\begin{enumerate}
    \item Under the homography $\mathbf{q} \mapsto H \mathbf{q}$, the plane $\mathbf{h}^\top \bf{q} = 0$ maps to the plane at infinity.
    \item The camera $AH^{-1}$ is finite if and only if $\mathbf{h}^\top \mathbf{c}_A \neq 0$. Its center then is $\mathbf{c}_{AH^{-1}} = \frac{1}{{\mathbf h}^\top \mathbf{c}_A} H\mathbf{c}_A$.
    \item The principal ray of $AH^{-1}$ is  
    $$\mathbf{n}_{AH^{-1}} = \delta ( \mathbf{h}^\top {\mathbf{c}}_A )  H^{-\top}\mathbf{n}_A,
    $$
    and so for all $\mathbf{q} \in \R^4$, we have 
    $$
    \mathbf{n}_{AH^{-1}}^\top(H\mathbf{q}) = \delta (\mathbf{h}^\top \mathbf{c}_A ) (\mathbf{n}_{A}^\top \mathbf{q}).
    $$
\end{enumerate}
\end{lemma}

Suppose we have a projective reconstruction $(\mathcal{A},\mathcal{Q})$ of $\mathcal{P}$.
Then recall that $\mathbf{n}_i^\top \mathbf{q}_k = \det(G_i) w_{ik}$ for all $i,k$. Since all $\widehat{\mathbf{p}}_{ik}$ have last coordinate $1$, $w_{ik} \neq 0$, and since all cameras are finite $\det(G_i) \neq 0$ for all $i$. Therefore, $\mathbf{n}_i^\top \mathbf{q}_k \neq 0$ for any $i,k$, or equivalently, no point $\mathbf{q}_k \in \mathcal{Q}$ lies on the principal plane of any camera $A_i$. Define $\si_{ik} = \sign(\mathbf{n}_i^\top \mathbf{q}_k) \in \{-1,1\}$ for all $i,k$. 
Using this notation we can state necessary and sufficient conditions for a projective reconstruction to become chiral under a homography.

\begin{theorem} \label{thm:conditions for mview chiral recon}
Suppose we have a projective reconstruction $(\mathcal{A} , \mathcal{Q})$ of $\mathcal{P}$ and  
$\si_{ik} = \sign(\mathbf{n}_i^\top \mathbf{q}_k)$. Then there is a $H \in \GL_4$ with last row $\mathbf{h}^\top$ such that 
$(\mathcal{A} H^{-1} , H\mathcal{Q})$ is a chiral reconstruction of $\mathcal{P}$ if and only if one of the following 
sets $\mathcal{S}_1$ or $\mathcal{S}_2$ is nonempty:
\begin{align}
  \label{eq:samesidesystem} \mathcal{S}_1 &= \left\{  \mathbf{h}\ | \ \forall i,j,k,\  \begin{array}{l} (\mathbf{h}^\top \mathbf{q}_k) (\mathbf{h}^\top \mathbf{c}_i)\si_{ik} \ge 0, \\
    (\mathbf{h}^\top \mathbf{c}_i) (\mathbf{h}^\top \mathbf{c}_j)\si_{ik}\si_{jk} > 0  \end{array}\right\} \\
  \label{eq:oppositesidesystem} \mathcal{S}_2 &= \left \{\mathbf{h}\ |\ \forall i,j,k,\ \begin{array}{l}  (\mathbf{h}^\top \mathbf{q}_k) (\mathbf{h}^\top \mathbf{c}_i)\si_{ik} \le 0, \\
(\mathbf{h}^\top \mathbf{c}_i) (\mathbf{h}^\top \mathbf{c}_j)\si_{ik}\si_{jk} > 0 \end{array}\right\}
 \end{align}
\end{theorem}
\begin{proof}
The reconstruction $(\mathcal{A}H^{-1}, H\mathcal{Q})$ of $\mathcal{P}$ is chiral 
if and only if for each $k$, $H\mathbf{q}_k$ lies in the chiral domain $D_{\mathcal{A}H^{-1}}$ of the camera arrangement $\mathcal{A}H^{-1}$. Therefore, 
from  \Cref{thm:chiral set of an arrangement}, Lemma \ref{lem:transformations},  and the 
requirement that cameras in the chiral reconstruction need to be finite, i.e., $\mathbf{h}^\top {\mathbf{c}}_i \neq 0$ for all $i$, $(\mathcal{A} H^{-1} , H\mathcal{Q})$ is chiral if and only if there exist $\mathbf{h}, \de$ such that for all $i,j,k$, 
\begin{eqnarray}
   &(\mathbf{n}_\infty^\top H \mathbf{q}_k)(\mathbf{n}_{A_i H^{-1}}^\top H \mathbf{q}_k) \\
   & = \de (\mathbf{h}^\top \mathbf{q}_k) (\mathbf{h}^\top {\mathbf{c}}_i) (\mathbf{n}_i^\top \mathbf{q}_k) \\
  \label{eq:niqkproduct}  & = \de (\mathbf{h}^\top \mathbf{q}_k) (\mathbf{h}^\top {\mathbf{c}}_i) \si_{ik} \ge 0
   \end{eqnarray}
   and 
   \begin{eqnarray}
     &(\mathbf{n}_{A_i H^{-1}}^\top H \mathbf{q}_k)(\mathbf{n}_{A_j H^{-1}}^\top H \mathbf{q}_k) \\ 
     & =  (\mathbf{h}^\top {\mathbf{c}}_i) (\mathbf{h}^\top {\mathbf{c}}_j)(\mathbf{n}_i^\top \mathbf{q}_k)(\mathbf{n}_j^\top \mathbf{q}_k) \\
     \label{eq:ninjqkproduct} & = (\mathbf{h}^\top {\mathbf{c}}_i) (\mathbf{h}^\top {\mathbf{c}}_j) \si_{ik} \si_{jk} > 0.   
 \end{eqnarray}
 Recall that we write $\mathbf{n}_i$ as shorthand for $\mathbf{n}_{A_i}$. 
 %, and that $\mathbf{n}_i^\top \mathbf{q}_k \neq 0$ for all $i,k$. Substituting $\si_{ik} \in \{-1,1\}$ for $\sign(\mathbf{n}_i^\top \mathbf{q}_k)$, we get 
 There are two sets  $\mathcal{S}_1$ and $\mathcal{S}_2$ to account for the sign of $\delta$. For 
 $\delta > 0$, the feasibility of \eqref{eq:niqkproduct} and \eqref{eq:ninjqkproduct} is  equivalent to 
$\mathcal{S}_1$ being nonempty. For $\delta < 0$, we get $\mathcal{S}_2$.  Lastly, any tuple $(\mathbf{h}, \de)$ can be completed to a $H \in \textup{GL}_4$ where $\mathbf{h}^\top$ is the last row of $H$ and $\det(H^{-1}) = \de$.
\qed
\end{proof}

% \textcolor{red}{
% For a $H \in \GL_4$ with last row $\mathbf{h}^\top$, the reconstruction $(\mathcal{A}H^{-1}, H\mathcal{Q})$ of $\mathcal{P}$ is chiral 
% if and only if for each $k$, $H\mathbf{q}_k$ lies in the chiral domain $D_{\mathcal{A}H^{-1}}$ of the camera arrangement $\mathcal{A}H^{-1}$.  
% Therefore, 
% from  \Cref{thm:chiral set of an arrangement}, Lemma \ref{lem:transformations},  
% and the requirement that cameras in the chiral reconstruction need to be finite, i.e., $\mathbf{h}^\top {\mathbf{c}}_i \neq 0$ for all $i$, $(\mathcal{A} H^{-1} , H\mathcal{Q})$ is chiral if and only if one of the following sets 
% $\mathcal{S}_1$ or $\mathcal{S}_2$ is nonempty:
% \begin{align}
%   \label{eq:samesidesystem} \mathcal{S}_1 &= \left\{  \mathbf{h}\ | \ \forall i,j,k,\  \begin{array}{l} (\mathbf{h}^\top \mathbf{q}_k) (\mathbf{h}^\top \mathbf{c}_i)\si_{ik} \ge 0, \\
%     (\mathbf{h}^\top \mathbf{c}_i) (\mathbf{h}^\top \mathbf{c}_j)\si_{ik}\si_{jk} > 0  \end{array}\right\} \\
%   \label{eq:oppositesidesystem} \mathcal{S}_2 &= \left \{\mathbf{h}\ |\ \forall i,j,k,\ \begin{array}{l}  (\mathbf{h}^\top \mathbf{q}_k) (\mathbf{h}^\top \mathbf{c}_i)\si_{ik} \le 0, \\
% (\mathbf{h}^\top \mathbf{c}_i) (\mathbf{h}^\top \mathbf{c}_j)\si_{ik}\si_{jk} > 0 \end{array}\right\}
%  \end{align}
%  }

\Cref{thm:conditions for mview chiral recon} inspires the algebraic notion of \emph{signing} a reconstruction. Indeed, if $\mathcal{S}_1$ or 
$\mathcal{S}_2$ is non-empty then the second set of inequalities in each set say that for each pair $i,j$,  the product 
$\si_{ik} \si_{jk}$ must be constant for all $k$. This will be guaranteed if for each camera we can choose one sign for $\si_{ik}$ for all $k$. 

\begin{definition}
\label{def:signed reconstruction}
A \emph{signed reconstruction} $(\mathcal{A}, \mathcal{Q}^s)$ of $\mathcal{P}$ is a projective reconstruction of $\mathcal{P}$ in which for each camera $i$, there exist constants $\si_i^s \in \{-1,1\}$ such that $\sign \left(\mathbf{n}_i^\top \mathbf{q}_k^s\right) = \si_i^s$ for all $k$. We say that a projective reconstruction $(\mathcal{A}, \mathcal{Q})$ can be signed if there exist $\mathbf{q}_k^s\in\R^4$ such that $\mathbf{q}_k^s \sim \mathbf{q}_k$ in $\P^3$ and $(\mathcal{A}, \mathcal{Q}^s)$ is a signed reconstruction.
\end{definition}

Note that signing a projective reconstruction $(\mathcal{A}, \mathcal{Q})$ changes it to $(\mathcal{A}, \mathcal{Q}^s)$ which only 
 amounts to changing the sign of some world points. It does not affect the cameras or chirality of the world points in these cameras. Geometrically, signing a reconstruction puts all chosen representatives of the world points in the same half space in $\R^4$, of the principal plane of each camera.

The following result shows that being able to sign a projective reconstruction is necessary to transform it to a chiral reconstruction. In this sense the signed reconstructions of $\mathcal{P}$ sit in between the projective reconstructions and chiral reconstructions of $\mathcal{P}$.

\begin{lemma}
\label{lem:signed reconstruction}
Suppose that a projective reconstruction $(\mathcal{A}, \mathcal{Q})$ of $\mathcal{P}$ is projectively equivalent to a chiral reconstruction of $\mathcal{P}$. Then for each pair $i,j$, the product $\si_{ik}\si_{jk}$ is constant for all $k$, and $(\mathcal{A}, \mathcal{Q})$ can be signed. 
\end{lemma}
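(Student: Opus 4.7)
The plan is to unpack the hypothesis through the two systems $\mathcal{S}_1$ and $\mathcal{S}_2$ derived in the text just before the lemma, and then read off both conclusions directly from the strict inequalities in those systems. First I would observe: by hypothesis there is an $H \in \GL_4$ with last row $\mathbf{h}^\top$ such that $(\mathcal{A}H^{-1}, H\mathcal{Q})$ is chiral, and this is precisely the statement that $\mathbf{h}$ lies in $\mathcal{S}_1 \cup \mathcal{S}_2$ (recall $\sigma_{ik} = \sign(\mathbf{n}_i^\top \mathbf{q}_k)$). Both $\mathcal{S}_1$ and $\mathcal{S}_2$ contain the same strict inequality
\[
(\mathbf{h}^\top \mathbf{c}_i)(\mathbf{h}^\top \mathbf{c}_j)\,\sigma_{ik}\sigma_{jk} > 0 \quad \text{for all } i,j,k,
\]
so I would base the entire argument on that single family of conditions; the only sign difference between $\mathcal{S}_1$ and $\mathcal{S}_2$ lies in the other inequality, which is not needed here.

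Next I would derive constancy of $\sigma_{ik}\sigma_{jk}$. Since $(\mathbf{h}^\top \mathbf{c}_i)$ and $(\mathbf{h}^\top \mathbf{c}_j)$ are fixed nonzero scalars (independent of $k$), the displayed strict inequality forces
\[
\sigma_{ik}\sigma_{jk} \;=\; \sign\bigl((\mathbf{h}^\top \mathbf{c}_i)(\mathbf{h}^\top \mathbf{c}_j)\bigr),
\]
which has no $k$ dependence. This immediately proves the first claim of the lemma.

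Finally I would construct an explicit signing. Define $\mathbf{q}_k^s := \sigma_{1k}\,\mathbf{q}_k$, so that $\mathbf{q}_k^s \sim \mathbf{q}_k$ in $\P^3$ (only a sign change). Then for every camera $A_i$,
\[
\sign\bigl(\mathbf{n}_i^\top \mathbf{q}_k^s\bigr) \;=\; \sigma_{1k}\,\sigma_{ik},
\]
and by the constancy result just proved (applied to the pair $1, i$), this quantity is independent of $k$. Setting $\sigma_i^s := \sigma_{1k}\sigma_{ik}$ (any $k$) gives the required signs, so $(\mathcal{A}, \mathcal{Q}^s)$ is a signed reconstruction and $(\mathcal{A}, \mathcal{Q})$ can be signed.

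There is no real obstacle here — the argument is essentially bookkeeping on sign patterns once the equivalence between chiral transformability and nonemptiness of $\mathcal{S}_1 \cup \mathcal{S}_2$ is invoked. The only mild care needed is to emphasize that the strict (rather than weak) inequality in the camera-center condition is what pins $\sigma_{ik}\sigma_{jk}$ down to a single value, and that this works uniformly whether $\mathbf{h}$ came from $\mathcal{S}_1$ or $\mathcal{S}_2$, so no case split on the two systems is required.
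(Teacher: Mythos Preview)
Your proposal is correct and follows essentially the same approach as the paper: both invoke the nonemptiness of $\mathcal{S}_1 \cup \mathcal{S}_2$, extract the constancy of $\sigma_{ik}\sigma_{jk}$ from the common strict inequality $(\mathbf{h}^\top \mathbf{c}_i)(\mathbf{h}^\top \mathbf{c}_j)\sigma_{ik}\sigma_{jk} > 0$, and then sign by setting $\mathbf{q}_k^s = \sigma_{1k}\mathbf{q}_k$. Your write-up is in fact a bit more explicit than the paper's in justifying why the strict inequality forces constancy, but the argument is the same.
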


\begin{proof}
We saw that the projective reconstruction $(\mathcal{A}, \mathcal{Q})$ can be made 
chiral only if either $\mathcal{S}_1$ or $\mathcal{S}_2$ is non-empty which happens 
only if for each pair $i,j$, the product $\sigma_{ik}\sigma_{jk}$ is constant for every $k$. 
In this case, we show that $(\mathcal{A}, \mathcal{Q})$ can be signed. For each $k$, define $\mathbf{q}_k^s := \mathbf{q}_k$ if $\si_{1k} = 1 $ or $\mathbf{q}_k^s := -\mathbf{q}_k$ if $\si_{1k} = -1$. By construction, $\si_{1k}^s := \sign( \mathbf{n}_1^\top \mathbf{q}_k^s ) =  1$ for all $k$. After this change, we still have $(\si_{1k}^s \si_{ik}^s)$ is constant for all $k$ since $\mathbf{q}_k$ appears quadratically in this expression. 
Then it follows that for each $i$, $\si_{ik}^s$ is constant for all $k$, and $(\mathcal{A}, \mathcal{Q}^s)$ is a signed reconstruction of $\mathcal{P}$.
\qed\end{proof}

Our concept of signing a reconstruction is 
equivalent to Hartley's Algorithm 21.1 (ii)  \cite{HartleyZisserman2004}. Signed reconstructions are closely related to {\em oriented projective reconstructions} in papers that model chirality using oriented projective geometry. As we do not discuss the oriented projective setup in this paper, we refer the interested reader to \cite{werner2003combinatorial}, \cite{werner2003constraint}, \cite{werner2001oriented} for details.

By \Cref{lem:signed reconstruction}, being able to sign a reconstruction is a necessary step in transforming to a chiral reconstruction.
In what follows we omit the superscript $s$ on a 
signed reconstruction, i.e., if we say that a projective 
reconstruction $(\mathcal{A},\mathcal{Q})$ is signed, then we mean that $\mathcal{Q} = \mathcal{Q}^s$ and $\sigma_i = \sigma_i^s$.

 We now rephrase the necessary and sufficient conditions for when a projective reconstruction can be made chiral in the language of polyhdral cones and their duals. Recall that 
 $K_\mathcal{Q} = \cone\{ \mathbf{q}_1 , \dots, \mathbf{q}_n\}. $ We define 
 $$K_{-\mathcal{Q}} = \cone\{ -\mathbf{q}_1 , \dots, -\mathbf{q}_n\}.$$ 
 Similarly, $K_{\si C}$ and $K_{-\si C}$ are the cones generated by 
 $\sigma C := \{ \si_1 {\mathbf{c}}_1, \dots, \si_m {\mathbf{c}}_m\}$ and 
 $-\si C$.
 
\begin{theorem}
\label{thm:cone conditions for chiral recon}
Given a signed reconstruction $(\mathcal{A} , \mathcal{Q})$ of $\mathcal{P}$, there 
is a $H \in \GL_4$ such that $(\mathcal{A} H^{-1} , H\mathcal{Q})$ is a chiral reconstruction if and only if
\begin{align}
   \label{eq:cone conditions for chiral recon} K_\mathcal{Q}^\ast \cap ( \interior K_{\si C}^\ast \cup \interior K_{-\si C}^\ast ) \neq \{\mathbf{0}\}
\end{align}
where  
%$K_{\si C}$ =  \cone\{ \si_1 {\mathbf{c}}_1, \dots, \si_m {\mathbf{c}}_m\}$, 
$\interior K_{\si C}^\ast$ is the interior of the dual cone of 
$K_{\si C}$, and 
%$K_\mathcal{Q} = \cone\{ \mathbf{q}_1 , \dots, \mathbf{q}_n\}$, and
$K_\mathcal{Q}^\ast$ is the dual cone of $K_{\mathcal{Q}}$.
\end{theorem}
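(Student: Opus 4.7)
The plan is to reduce the statement to the feasibility of the systems $\mathcal{S}_1, \mathcal{S}_2$ from \eqref{eq:samesidesystem} and \eqref{eq:oppositesidesystem}, and then translate the resulting linear inequalities on the last row $\mathbf{h}^\top$ of $H$ into membership in the cones described by the theorem. Throughout, since $(\mathcal{A},\mathcal{Q})$ is signed, we have $\sigma_{ik}=\sigma_i$ for all $k$, so every quantity $\sigma_{ik}\sigma_{jk}$ appearing in $\mathcal{S}_1, \mathcal{S}_2$ equals $\sigma_i\sigma_j$, which is independent of $k$.

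First I would invoke the discussion preceding \eqref{eq:samesidesystem}--\eqref{eq:oppositesidesystem}: by \Cref{lem:transformations} together with \Cref{thm:chiral set of an arrangement}, a homography $H\in\GL_4$ with last row $\mathbf{h}^\top$ yields a chiral reconstruction $(\mathcal{A} H^{-1}, H\mathcal{Q})$ exactly when $\mathbf{h}\in \mathcal{S}_1\cup\mathcal{S}_2$. (One then extends $\mathbf{h}^\top$ to an invertible matrix $H$, which is possible whenever $\mathbf{h}\neq\mathbf{0}$.) Thus the theorem amounts to showing
\[
\mathcal{S}_1\cup\mathcal{S}_2 \neq \emptyset \iff K_\mathcal{Q}^\ast \cap \bigl( \interior K_{\sigma C}^\ast \cup \interior K_{-\sigma C}^\ast\bigr) \neq \{\mathbf{0}\}.
\]

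Next I would unpack the strict pairwise inequalities in $\mathcal{S}_1$ (respectively $\mathcal{S}_2$): for a signed reconstruction, $(\mathbf{h}^\top\mathbf{c}_i)(\mathbf{h}^\top\mathbf{c}_j)\sigma_i\sigma_j>0$ for every $i,j$ says that the scalars $\mathbf{h}^\top(\sigma_i\mathbf{c}_i)$ share a common sign $\varepsilon\in\{+,-\}$. Combined with the characterization of the interior of a dual cone recalled in \Cref{sec:background} (including the non-pointed case, where one intersects with the appropriate linear subspace), $\mathbf{h}^\top(\sigma_i\mathbf{c}_i)>0$ for all $i$ is equivalent to $\mathbf{h}\in\interior K_{\sigma C}^\ast$, while the opposite sign is equivalent to $\mathbf{h}\in\interior K_{-\sigma C}^\ast$. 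The non-strict conditions $(\mathbf{h}^\top\mathbf{q}_k)(\mathbf{h}^\top\mathbf{c}_i)\sigma_i\geq 0$ in $\mathcal{S}_1$ (resp.\ $\leq 0$ in $\mathcal{S}_2$), together with the determined sign of $\sigma_i\mathbf{h}^\top\mathbf{c}_i$, collapse to $\mathbf{h}^\top\mathbf{q}_k\geq 0$ (resp.\ $\leq 0$) for every $k$, i.e.\ $\mathbf{h}\in K_\mathcal{Q}^\ast$ or $-\mathbf{h}\in K_\mathcal{Q}^\ast$.

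Putting these two translations together gives the forward implication: any $\mathbf{h}\in\mathcal{S}_1$ lies in $K_\mathcal{Q}^\ast\cap\interior K_{\sigma C}^\ast$, while for $\mathbf{h}\in\mathcal{S}_2$ one checks that either $\mathbf{h}$ or $-\mathbf{h}$ lies in $K_\mathcal{Q}^\ast\cap\interior K_{-\sigma C}^\ast$ (the sign flip swaps $K_{\sigma C}^\ast$ with $K_{-\sigma C}^\ast$ and $K_\mathcal{Q}^\ast$ with $-K_\mathcal{Q}^\ast$). The reverse implication is the same translation read backwards: a nonzero $\mathbf{h}\in K_\mathcal{Q}^\ast\cap\interior K_{\sigma C}^\ast$ produces an element of $\mathcal{S}_1$, and a nonzero $\mathbf{h}\in K_\mathcal{Q}^\ast\cap\interior K_{-\sigma C}^\ast$ produces an element of $\mathcal{S}_2$ (after possibly replacing $\mathbf{h}$ by $-\mathbf{h}$). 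Because the dual cone intersections are specified to be strictly larger than $\{\mathbf{0}\}$, we obtain a nonzero $\mathbf{h}$ and hence an invertible $H$, completing the equivalence.

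The only delicate point I foresee is the case where $K_{\sigma C}$ fails to be pointed, so that $\interior K_{\sigma C}^\ast$ is lower-dimensional; one must use the general description of $\interior K_{\sigma C}^\ast$ as $L^\perp$ intersected with strict inequalities on a pointed summand, as recalled in \Cref{sec:background}, to ensure that the equivalence between ``strict pairwise sign conditions'' and ``interior of the dual cone'' remains correct. Apart from that subtlety, the argument is a direct dictionary between the linear inequality systems $\mathcal{S}_1,\mathcal{S}_2$ and the conic intersection appearing in \eqref{eq:cone conditions for chiral recon}.
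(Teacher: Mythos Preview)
Your proposal is correct and follows essentially the same route as the paper: substitute $\sigma_i$ for $\sigma_{ik}$ in the systems $\mathcal{S}_1,\mathcal{S}_2$, rewrite the resulting sign conditions on $\mathbf{h}$ as membership in the cones $K_{\pm\mathcal{Q}}^\ast$ and $\interior K_{\pm\sigma C}^\ast$, and then collapse the four pieces to the stated intersection using the symmetry $\mathbf{h}\mapsto -\mathbf{h}$. One small imprecision to fix: your sentence ``any $\mathbf{h}\in\mathcal{S}_1$ lies in $K_\mathcal{Q}^\ast\cap\interior K_{\sigma C}^\ast$'' drops the $\varepsilon=-$ branch; the paper writes $\mathcal{S}_1=(K_\mathcal{Q}^\ast\cap\interior K_{\sigma C}^\ast)\cup(K_{-\mathcal{Q}}^\ast\cap\interior K_{-\sigma C}^\ast)$ explicitly, though as you note the two pieces are exchanged by negation so nothing is lost.
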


\begin{proof}%[of \Cref{thm:cone conditions for chiral recon}]
Since $(\mathcal{A} , \mathcal{Q})$ 
is a signed reconstruction, we may substitute the constants $\si_i$ for $\si_{ik}$.
Then $\mathcal{S}_1$ is the 
union of the cones 
$(K_\mathcal{Q}^\ast \cap \interior K_{\si C}^\ast)$ and
$(K_{-\mathcal{Q}}^\ast \cap \interior K_{-\si C}^\ast)$. Similarly, 
$\mathcal{S}_2$ is the union of   $(K_{-\mathcal{Q}}^\ast \cap \interior K_{\si C}^\ast)$ and $(K_\mathcal{Q}^\ast \cap \interior K_{-\si C}^\ast)$.
Since $K_\mathcal{Q}^\ast \cap \interior K_{\si C}^\ast \neq \{\mathbf{0}\}$ if and only if $K_{-\mathcal{Q}}^\ast \cap \interior K_{-\si C}^\ast \neq \{\mathbf{0}\}$, and $K_{-\mathcal{Q}}^\ast \cap \interior K_{\si C}^\ast \neq \{\mathbf{0}\}$ if and only if $K_\mathcal{Q}^\ast \cap \interior K_{-\si C}^\ast \neq \{\mathbf{0}\}$, finding a chiral reconstruction reduces to checking whether $K_\mathcal{Q}^\ast$ intersects one of the cones $\interior K_{\si C}^\ast $ or $\interior K_{-\si C}^\ast $.
\qed\end{proof}

The inequalities presented by the cone conditions in Theorem~\ref{thm:cone conditions for chiral recon} are essentially Hartley's chiral inequalities \cite[Equation~21.5]{HartleyZisserman2004}. 
In the rest of this section we use this 
interpretation of the chiral inequalities to recover and 
expand on Hartley's results on chirality.
%In the rest of this section, we make several remarks about \Cref{thm:cone conditions for chiral recon} and show how it relates to Hartley's results.

\subsubsection{Quasi-affine transformations}

We first interpret \Cref{eq:samesidesystem} and \Cref{eq:oppositesidesystem} geometrically. \Cref{lem:transformations} shows that the effect of a homography on chirality is determined by the hyperplane it sends to infinity and its position relative to the camera centers and world points. The last row of a $4\times 4$ matrix $H$ representing a homography is the normal vector $\mathbf{h}$ of an oriented hyperplane in $\R^4$. 
By \Cref{lem:signed reconstruction}, a projective 
reconstruction of $\mathcal{P}$ can be made chiral by a homography only if 
it can be signed. Therefore, we may assume without loss of generality 
that we are starting with a signed reconstruction.

Given a signed reconstruction,
the second conditions of \Cref{eq:samesidesystem}, $(\mathbf{h}^\top \mathbf{c}_i) (\mathbf{h}^\top \mathbf{c}_j)\si_{i}\si_{j} > 0$, express that the camera centers should all be in the same (open) half-space given by $\mathbf{h}$.
The first conditions of \Cref{eq:samesidesystem}, $(\mathbf{h}^\top \mathbf{q}_k) (\mathbf{h}^\top \mathbf{c}_i)\si_{i} \ge 0$, say that the (possibly resigned by $\sigma_i$) world points, and camera centers, also lie in the same half spaces of this oriented hyperplane (or, in case of equality, combined with the second conditions, the world point lies on the hyperplane). 
Hence, these conditions encode a linear separation condition on the given points in $\R^4$, which can be checked via linear programming. The geometric interpretation of the inequalities in \Cref{eq:oppositesidesystem} is analogous: the (possibly resigned) world points lie in the opposite closed half-space defined by 
$\mathbf{h}$ to the camera centers.

Hartley presents this geometry in terms of {\em quasi-affine transformations} in  \cite{hartley1998chirality} and \cite[Chapter 21]{HartleyZisserman2004}. In \cite[Definition 21.3]{HartleyZisserman2004}, a homography $H$ is said to be quasi-affine with respect to a set $\mathcal{X} \subseteq \R^4$, with elements having last coordinate $1$, if no point in the convex hull of $\mathcal{X}$ is sent to infinity by $H$. We observe that this is equivalent to saying that $\mathbf{h}$, the last row of $H$, lies in $\interior K_\mathcal{X}^\ast$ or $\interior K_{-\mathcal{X}}^\ast$. To accommodate infinite points, we make a more general definition of a quasi-affine transformation.

\begin{definition}
\label{def:quasiaffine}
A linear map $H \in \textup{GL}_4$ is \emph{quasi-affine} with respect to $\mathcal{X} \subseteq \R^4$ if the last row $\mathbf{h}$ of $H$ lies in $K_\mathcal{X}^\ast \cup K_\mathcal{-X}^\ast$. Further, $H$ is \emph{strictly quasi-affine} with respect to $\mathcal{X}$ if $\mathbf{h} \in \interior K_\mathcal{X}^\ast \cup \interior K_{-\mathcal{X}}^\ast$.
\end{definition}

 Geometrically, $H$ is quasi-affine with respect to $\mathcal{X}$ if $H \mathcal{X}$ lies in one of the closed halfspaces defined by the  
hyperplane $\mathbf{h}^\perp = \{ \mathbf{x} \in\R^4\colon \mathbf{h}^\top \mathbf{x} = 0\}$, which is the plane sent to infinity
by the homography $H$. If $H \mathcal{X}$ lies in a open halfspace of 
$\mathbf{h}^\perp$ (as in Hartley's setup) then $H$ is strictly quasi-affine with respect to $\mathcal{X}$.

Recall that in a signed reconstruction $(\mathcal{A}, \mathcal{Q})$ we have fixed the sign of the last coordinates of all $\mathbf{q}_k \in \mathcal{Q}\subseteq \R^4$ and of all $\sigma_i {\mathbf{c}}_i$, and 
all points in $\mathcal{Q}$ and $\sigma C$ are considered to be in $\R^4$.
This allows Theorem \ref{thm:cone conditions for chiral recon} to be interpreted in terms of quasi-affine transformations.

\begin{theorem}
\label{thm:quasi-affine}
Suppose $(\mathcal{A}, \mathcal{Q})$ is a signed reconstruction of $\mathcal{P}$. Then there exists a chiral reconstruction of $\mathcal{P}$ if and only if there is a homography $H$ that is quasi-affine with respect to $\mathcal{Q}$ and strictly quasi-affine with respect to $\si C$. 
\end{theorem}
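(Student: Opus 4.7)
The plan is to deduce this from \Cref{thm:cone conditions for chiral recon} by unwinding the definition of (strict) quasi-affinity and using the elementary symmetry $K_{-X}^\ast = -K_X^\ast$ for any finite set $X \subseteq \R^4$.

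First I would restate what the two conditions mean at the level of the last row $\mathbf{h}$ of $H$. By \Cref{def:quasiaffine}, $H$ is quasi-affine with respect to $\mathcal{Q}$ iff $\mathbf{h} \in K_\mathcal{Q}^\ast \cup K_{-\mathcal{Q}}^\ast$, and strictly quasi-affine with respect to $\si C$ iff $\mathbf{h} \in \interior K_{\si C}^\ast \cup \interior K_{-\si C}^\ast$. So the conclusion to be proved is simply the existence of $\mathbf{h} \in (K_\mathcal{Q}^\ast \cup K_{-\mathcal{Q}}^\ast) \cap (\interior K_{\si C}^\ast \cup \interior K_{-\si C}^\ast)$ which additionally has $\mathbf{h}^\top \mathbf{c}_i \neq 0$ for every camera center (this last point is automatic from strict quasi-affinity, since $\mathbf{h}^\top(\si_i \mathbf{c}_i) > 0$ or $<0$ on all of $\si C$).

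For the forward direction, assuming a chiral reconstruction of $\mathcal{P}$ exists, \Cref{thm:cone conditions for chiral recon} hands us a nonzero $\mathbf{h} \in K_\mathcal{Q}^\ast \cap (\interior K_{\si C}^\ast \cup \interior K_{-\si C}^\ast)$. Any invertible $H$ having this $\mathbf{h}$ as its last row is then quasi-affine with respect to $\mathcal{Q}$ (since $K_\mathcal{Q}^\ast \subseteq K_\mathcal{Q}^\ast \cup K_{-\mathcal{Q}}^\ast$) and strictly quasi-affine with respect to $\si C$ by definition. For the reverse direction, given an $H$ as in the theorem, look at its last row $\mathbf{h}$. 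If $\mathbf{h}$ already lies in $K_\mathcal{Q}^\ast$, it fits the hypothesis of \Cref{thm:cone conditions for chiral recon} directly. If instead $\mathbf{h} \in K_{-\mathcal{Q}}^\ast$, replace $\mathbf{h}$ by $-\mathbf{h}$; the symmetries $K_{-\mathcal{Q}}^\ast = -K_\mathcal{Q}^\ast$ and $\interior K_{-\si C}^\ast = -\interior K_{\si C}^\ast$ show that $-\mathbf{h} \in K_\mathcal{Q}^\ast \cap (\interior K_{\si C}^\ast \cup \interior K_{-\si C}^\ast)$, so again \Cref{thm:cone conditions for chiral recon} applies and a chiral reconstruction exists.

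The main conceptual point (and only real thing to verify) is that passing from $\mathbf{h}$ to $-\mathbf{h}$ is harmless: the homography represented by the matrix $H$ with last row $\mathbf{h}^\top$ and the matrix $H'$ obtained by negating only the last row act identically on $\P^3$ up to a resigning of the fourth coordinate, which does not change chirality of the reconstructed points (by \Cref{def:chiral domain}, chirality is a property of the line through the origin in $\R^4$, not of the representative). Once this symmetry is observed, the proof is just a one-line translation between the cone formulation of \Cref{thm:cone conditions for chiral recon} and the language of quasi-affine transformations. I do not expect any substantive obstacle; the effort is in writing the four alternatives ($K_\mathcal{Q}^\ast$ vs.\ $K_{-\mathcal{Q}}^\ast$ paired with $\interior K_{\si C}^\ast$ vs.\ $\interior K_{-\si C}^\ast$) cleanly enough that the symmetry collapses them to the two cases already handled in \Cref{thm:cone conditions for chiral recon}.
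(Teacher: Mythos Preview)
Your proposal is correct and follows essentially the same route as the paper: both arguments reduce to \Cref{thm:cone conditions for chiral recon} via the symmetry $K_{-X}^\ast = -K_X^\ast$, observing that $(K_\mathcal{Q}^\ast \cup K_{-\mathcal{Q}}^\ast) \cap (\interior K_{\si C}^\ast \cup \interior K_{-\si C}^\ast)$ is nonempty iff $K_\mathcal{Q}^\ast \cap (\interior K_{\si C}^\ast \cup \interior K_{-\si C}^\ast)$ is. Your additional remarks about $\mathbf{h}^\top \mathbf{c}_i \neq 0$ being automatic and about the homography-level interpretation of negating $\mathbf{h}$ are correct but not strictly needed, since the equivalence is already established purely at the level of cones.
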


\begin{proof}
The intersection $K_Q^*\cap \left( \interior(K_{\si C}^*) \cup \interior(K_{-\si C}^*)\right)$
is nonempty if and only if 
$$\left(K_Q^*\cup K_{-Q}^*\right)\cap \left( \interior(K_{\si C}^*) \cup \interior(K_{-\si C}^*)\right)$$
is nonempty since ${\bf x} \in K_Q^*\cap \left( \interior(K_{\si C}^*) \cup \interior(K_{-\si C}^*)\right)$ if and only if 
$-{\bf x} \in K_{-Q}^*\cap \left( \interior(K_{\si C}^*) \cup \interior(K_{-\si C}^*)\right)$. The statement of the theorem is therefore equivalent to \Cref{thm:cone conditions for chiral recon}, by 
\Cref{def:quasiaffine}.
\qed\end{proof}

In Hartley's language, a ``quasi-affine reconstruction" is one which differs from a true scene by a quasi-affine transformation with respect to only the scene points. This is a weaker notion than a chiral reconstruction as Hartley points out (\cite[Section 8.1]{hartley1998chirality} and \cite[Section 21.]{HartleyZisserman2004}). By differentiating between strict and non-strict quasi-affine transformations we are able to state an if and only if theorem that connects quasi-affine transformations to chiral reconstructions.

Hartley's chiral inequalities are strict while our cone conditions in Theorem~\ref{thm:cone conditions for chiral recon} allow vectors $\mathbf{h}$ in the boundary of $K_\mathcal{Q}^\ast$. 
This is because the chiral domain is described by non-strict inequalities 
(\Cref{thm:chiral set of an arrangement}) which in turn came from extending the definition of chirality to all of $\mathbb{P}^3$. The reason to pass to $\interior K_{\si C}$ and $\interior K_{-\si C}$ was because of the need for finite cameras in a chiral reconstruction. In fact, we could have restricted to $\interior K_\mathcal{Q}^\ast$ in Theorem~\ref{thm:cone conditions for chiral recon} which would exactly give Hartley's chiral inequalities. This is because if the $\mathbf{h}$ produced in \Cref{thm:cone conditions for chiral recon} lies in the boundary of $K_\mathcal{Q}^\ast$, we may replace it by one in $\interior K_\mathcal{Q}^\ast$ by continuity. Geometrically this means that if $\mathcal{P}$ has a chiral reconstruction, then it has one in which all world points are finite and do not lie on any principal planes. 

Hartley's work was done with the aim of upgrading a two view projective reconstruction to a metric reconstruction. In follow up work, Nist{\'e}r addresses this question for multiple views \cite{nister2004untwisting}. He does this by transforming the projective reconstruction into one which is quasi-affine with respect to the camera centers.
As can be seen from \Cref{thm:quasi-affine} above, quasi-affineness with respect to the camera centers is a necessary condition for chirality. He does not enforce quasi-affineness with respect to the scene points, because they are often noisy and their chirality may change as part of the metric upgrade. Nist{\'e}r shows that enforcing the quasi-affineness on camera centers makes the iterative algorithm used to perform the subsequent metric upgrade easier and more reliable.

\subsubsection{Two-view chirality} 
We now recover Hartley's result that a two-view projective reconstruction can be made chiral if and only if it can be signed. Hartley remarks in \cite{hartley1998chirality} that the result 
does not extend to more than two cameras without further explanation. We use our conic tools to 
prove that the two-view result is tight and construct a counterexample with three cameras. We also explain the reason for the gap.

Suppose $(\{A_1, A_2\}, \mathcal{Q})$ is a two-view reconstruction of 
$\mathcal{P}$ 
such that $A_1$ and $A_2$ have distinct centers,  $A_1 \mathbf{q}_k = w_{1k} \widehat{\mathbf{p}}_{1k}$, and $A_2 \mathbf{q}_k = w_{2k} \widehat{\mathbf{p}}_{2k}$. Theorem 17 in \cite{hartley1998chirality} (also \cite[Theorem~1]{WernerPajdla2001}) gives a necessary and sufficient condition for when a two-view projective reconstruction can be transformed 
by a homography to a chiral reconstruction. We rederive this result  
in our language in \Cref{thm:Hartley paper thm} below. 
For the translation, recall that 
$\mathbf{n}_1^\top \mathbf{q}_k = \det (G_1) w_{1k} $, $ \mathbf{n}_2^\top \mathbf{q}_k = \det (G_2) w_{2k} $ and $\sigma_{ik} = \sign(\mathbf{n}_i^\top \mathbf{q}_k)$.
Therefore, the products $w_{1k} w_{2k}$ have the same sign for all $k$ if and only if  $(\mathbf{n}_1^\top \mathbf{q}_k)(\mathbf{n}_2^\top \mathbf{q}_k)$  have the same sign for all $k$, i.e., $\sigma_{1k}\sigma_{2k}$ is constant for all $k$.

The ``only if'' direction of \Cref{thm:Hartley paper thm} appears in both \cite[Theorem~17]{hartley1998chirality} and 
\cite[Theorem~21.7 (i)]{HartleyZisserman2004}, and the proof is straightforward. This argument 
is also the content of our \Cref{lem:signed reconstruction}.
The ``if'' direction appears in \cite{hartley1998chirality} with a rather complicated proof, 
and not in \cite{HartleyZisserman2004}.
We provide a short polyhedral proof of the ``if'' direction using 
\Cref{thm:cone conditions for chiral recon}. 
The conic formulation allows a simple proof via duality.

\begin{theorem}
\cite[Theorem 17]{hartley1998chirality}
\label{thm:Hartley paper thm}
A projective reconstruction $(\{A_1, A_2\}, \mathcal{Q})$ 
of $\mathcal{P}$ 
can be 
transformed by a homography $H$ to a chiral reconstruction if and only if $(\mathbf{n}_1^\top \mathbf{q}_k)(\mathbf{n}_2^\top \mathbf{q}_k)$ have the same sign for all $k$. 
\end{theorem}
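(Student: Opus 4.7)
The forward implication will follow immediately from \Cref{lem:signed reconstruction}: if $(\{A_1, A_2\}, \mathcal{Q})$ is projectively equivalent to a chiral reconstruction, then $\si_{1k}\si_{2k}$ is constant in $k$, which is exactly the statement that $(\mathbf{n}_1^\top \mathbf{q}_k)(\mathbf{n}_2^\top \mathbf{q}_k)$ has a common sign for all $k$. This is the easy direction and requires no further work.

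For the reverse direction, the first step is to apply the sign-flipping construction from the proof of \Cref{lem:signed reconstruction} to resign $\mathcal{Q}$ so that $\si_1^s = \si_2^s = 1$, giving $\mathbf{n}_1^\top \mathbf{q}_k > 0$ and $\mathbf{n}_2^\top \mathbf{q}_k > 0$ for every $k$. By \Cref{thm:cone conditions for chiral recon}, it then suffices to exhibit a nonzero $\mathbf{h} \in K_\mathcal{Q}^\ast \cap (\interior K_C^\ast \cup \interior K_{-C}^\ast)$, where $K_C = \cone(\mathbf{c}_1, \mathbf{c}_2)$.

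The plan is to search for $\mathbf{h}$ inside the plane spanned by the two principal rays: write $\mathbf{h} = \al_1 \mathbf{n}_1 + \al_2 \mathbf{n}_2$ with real (possibly negative) coefficients. Because $\mathbf{n}_i^\top \mathbf{c}_i = 0$, one has $\mathbf{h}^\top \mathbf{c}_1 = \al_2 (\mathbf{n}_2^\top \mathbf{c}_1)$ and $\mathbf{h}^\top \mathbf{c}_2 = \al_1 (\mathbf{n}_1^\top \mathbf{c}_2)$. Assuming the generic situation $\mathbf{n}_1^\top \mathbf{c}_2 \neq 0 \neq \mathbf{n}_2^\top \mathbf{c}_1$, set $\al_1 = \sign(\mathbf{n}_1^\top \mathbf{c}_2)\, a_1$ and $\al_2 = \sign(\mathbf{n}_2^\top \mathbf{c}_1)\, a_2$ with $a_1, a_2 > 0$; this forces $\mathbf{h}^\top \mathbf{c}_1, \mathbf{h}^\top \mathbf{c}_2 > 0$, so $\mathbf{h} \in \interior K_C^\ast$. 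The remaining requirement is that $\mathbf{h}^\top \mathbf{q}_k = \al_1(\mathbf{n}_1^\top \mathbf{q}_k) + \al_2(\mathbf{n}_2^\top \mathbf{q}_k)$ share a single sign across $k$. When $\al_1, \al_2$ share a sign this is automatic from positivity of the $\mathbf{n}_i^\top \mathbf{q}_k$, and $\mathbf{h}$ (or $-\mathbf{h}$, landing in $\interior K_{-C}^\ast$) is the desired vector. When the $\al_i$ differ in sign, the positive ratios $(\mathbf{n}_1^\top \mathbf{q}_k)/(\mathbf{n}_2^\top \mathbf{q}_k)$ over finitely many $k$ are bounded, so $a_1/a_2$ can be taken large or small enough to dominate the negative term and yield $\mathbf{h}^\top \mathbf{q}_k \geq 0$ for every $k$.

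The mixed-sign subcase above is the main obstacle, and is precisely where the two-view hypothesis is used in an essential way: with only two principal rays, a single scalar rescaling of the "good" contribution can absorb the "bad" contribution because only one bounded ratio has to be dominated. This tactic has no analogue when $m \geq 3$, which is consistent with the counterexample later in the section showing that signed reconstructions need not be chiralizable. Finally, the degenerate cases $\mathbf{n}_1^\top \mathbf{c}_2 = 0$ or $\mathbf{n}_2^\top \mathbf{c}_1 = 0$ (an epipole at infinity) will be handled by an additional small perturbation of $\mathbf{h}$ in $\R^4$, using the strict positivity $\mathbf{n}_i^\top \mathbf{q}_k > 0$ to preserve the weak inequalities $\mathbf{h}^\top \mathbf{q}_k \geq 0$ under the perturbation while shifting $\mathbf{h}^\top \mathbf{c}_i$ away from $0$.
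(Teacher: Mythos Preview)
Your approach is close in spirit to the paper's, but there is one slip: you cannot in general resign $\mathcal{Q}$ so that $\sigma_1^s = \sigma_2^s = 1$. The construction in \Cref{lem:signed reconstruction} only guarantees that each $\sigma_i^s$ is \emph{constant} in $k$; if the common value of $\sigma_{1k}\sigma_{2k}$ is $-1$, then any signing satisfies $\sigma_1^s = -\sigma_2^s$, and your premise $\mathbf{n}_2^\top \mathbf{q}_k > 0$ fails. The fix is easy: work with $\sigma_i \mathbf{n}_i$ in place of $\mathbf{n}_i$ throughout, since $(\sigma_i \mathbf{n}_i)^\top \mathbf{q}_k > 0$ always holds after signing, and replace $K_C$ by $K_{\sigma C}$. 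The relevant inner products become $\mathbf{h}^\top(\sigma_j \mathbf{c}_j) = \alpha_i\,\sigma_i\sigma_j\,(\mathbf{n}_i^\top \mathbf{c}_j)$ for $\{i,j\}=\{1,2\}$, and the rest of your argument goes through unchanged.

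With that correction, your proof and the paper's are variants of the same idea. The paper observes directly that $\sigma_1 \mathbf{n}_1 \in \interior K_\mathcal{Q}^\ast$ (from $\sigma_1 \mathbf{n}_1^\top \mathbf{q}_k > 0$) and that $\sigma_1 \mathbf{n}_1 \in K_{\sigma C}^\ast \cup K_{-\sigma C}^\ast$ (from $\mathbf{n}_1^\top \mathbf{c}_1 = 0$, so only the sign of $(\sigma_1\mathbf{n}_1)^\top(\sigma_2\mathbf{c}_2)$ matters). Since the centers are distinct, $K_{\pm\sigma C}^\ast$ is full-dimensional, and one perturbs $\sigma_1 \mathbf{n}_1$ into its interior while remaining in $\interior K_\mathcal{Q}^\ast$. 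You instead parametrize $\mathbf{h}$ explicitly in the span of $\mathbf{n}_1,\mathbf{n}_2$ and solve for the coefficients, which is more constructive but incurs the same-sign/mixed-sign case split and a separate treatment of the degenerate cases $\mathbf{n}_i^\top \mathbf{c}_j = 0$. The paper's single perturbation step absorbs all of these at once.
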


\begin{proof}
Suppose $(\mathbf{n}_1^\top \mathbf{q}_k)({\mathbf{n}_2}^\top 
\mathbf{q}_k)$ have the same sign for all $k$. Then by 
\Cref{lem:signed reconstruction}, $\mathbf{n}_1^\top \mathbf{q}_k = \sigma_{1k} = \sigma_1$ and $\mathbf{n}_2^\top \mathbf{q}_k = \sigma_{2k} = \sigma_2$ for all $k$.

We first note that 
$\si_1 \mathbf{n}_1$ is a nonzero element of either $K_{\si C}^\ast$ or $K_{-\si C}^\ast$. We have that $(\si_1 \mathbf{n}_1)^\top (\si_1 {\mathbf{c}}_1) = 0$. If $\sign(\si_1 \mathbf{n}_1)^\top (\si_2 {\mathbf{c}}_2) = 1$ or $\sign(\si_1 \mathbf{n}_1)^\top (\si_2 {\mathbf{c}}_2) = 0,$ then $\si_1 \mathbf{n}_1 \in K_{\si C}^\ast$. Otherwise if $\sign(\si_1 \mathbf{n}_1)^\top (\si_2 {\mathbf{c}}_2) = -1$, then $\si_1 \mathbf{n}_1 \in (K_{-\si C}^\ast)$. 

Also, since the centers 
${\mathbf{c}}_1$ and ${\mathbf{c}}_2$ are distinct, 
$\sigma_1{\mathbf{c}}_1$ is not a scalar multiple of $\sigma_2 {\mathbf{c}}_2$, hence $K_{\sigma C}$ is a pointed cone.
 This implies that $K_{\sigma C}^\ast$ is full-dimensional and hence has an interior. The same is true for $K_{-\sigma C}^\ast$.

Without loss of generality suppose $\si_1 \mathbf{n}_1 \in  K_{\si C}^\ast$. Since 
$\sign(\mathbf{n}_1^\top\mathbf{q}_k) = \sigma_1$, we have that $\si_1\mathbf{n}_1^\top\mathbf{q}_k  >0$ for all $k$, and so $\si_1 \mathbf{n}_1 \in \interior K_\mathcal{Q}^\ast$. Let $U$ be a neighborhood of $\si_1 \mathbf{n}_1$ contained in $\interior K_\mathcal{Q}^\ast$. 
Since $\sigma_1 \mathbf{n}_1$ is also in $K_{\sigma C}^\ast$, there is some 
$\mathbf{h} \in U$ that lies in the $\interior K_\mathcal{Q}^\ast \cap \interior K_{\si C}^*$. This $\mathbf{h}$ 
is in $\mathcal{S}_1$, so by \Cref{thm:cone conditions for chiral recon}, $\mathcal{P}$ has a chiral reconstruction.
\qed\end{proof}

\Cref{thm:Hartley paper thm} shows that a chiral reconstruction exists if and only if $(\mathbf{n}_1^\top \mathbf{q}_k)(\mathbf{n}_2^\top \mathbf{q}_k)$ has the same sign for all $k$. \Cref{lem:signed reconstruction} shows that this is equivalent to being able to sign the reconstruction $(\{A_1,A_2\}, \mathcal{Q})$. Hence a two-view reconstruction can be made chiral if and only if it can be signed. Our notion of signing readily generalizes to multiple views. However, the following example shows that the ``if'' direction of \Cref{thm:Hartley paper thm} does not generalize to multiple views. In other words, it may not be possible to transform a signed reconstruction with three or more cameras into a chiral one.

\begin{example} \label{ex:signing not enough for 3 cameras}
Consider the reconstruction 
$$(\mathcal{A} = \{A_1,A_2,A_3\} ,\mathcal{Q} = (\mathbf{q}_1, \mathbf{q}_2))$$
where
\begin{align*}
A_1 = \begin{bmatrix} 0&0&-1&-1\\ 0&1&0&1 \\ 1&0&0&0 \end{bmatrix} A_2  = \begin{bmatrix} 1&0&0&1 \\0&0&-1 & 1\\ 0&1&0&0  \end{bmatrix} A_3 = \begin{bmatrix} 1&0&0&1\\ 0&1&0&-1 \\ 0&0&1&0  \end{bmatrix} 
\end{align*}
and $\mathbf{q}_1 = (1,1,2,-6)^\top$ and $\mathbf{q}_2 = (1,1,2,6)^\top$. 
% \rtnote{if we are calling this a reconstruction, shouldn't we give the image points and the $w_{ik}$?}
% \apnote{If we really want them, here are the image points: $\mathcal{P} = \{( (4,-4),(-5,-8),(-5/2,7/2) ), ( (-8,7),(7,4),(7/2,-5/2) )\}$. I don't think they are particularly informative}

The reconstruction is signed and $\si_1 = \si_2 = \si_3 = 1$ because $\si_{ik} = \sign(\mathbf{n}_i^\top \mathbf{q}_k) = 1 $ for all $i,k$. However, $(\mathcal{A}, \mathcal{Q})$ is not chiral because $\mathbf{q}_1$ is not in $D_\mathcal{A}$. Indeed, check that $({\bf n}_\infty^\top {\bf q}_1)({\bf n}_i^\top {\bf q}_1) < 0$ for 
all $i=1,2,3$.

We argue that $(\mathcal{A},\mathcal{Q})$ is not projectively equivalent to a chiral reconstruction using the conditions of \Cref{thm:cone conditions for chiral recon}. Consider the matrices $M^+ = \begin{bmatrix}
\mathbf{c}_1 & \mathbf{c}_2 & \mathbf{c}_3 & \mathbf{q}_1 & \mathbf{q}_2
\end{bmatrix}$ and $M^- = \begin{bmatrix}
-\mathbf{c}_1 & -\mathbf{c}_2 & -\mathbf{c}_3 & \mathbf{q}_1 & \mathbf{q}_2
\end{bmatrix}$, or explicitly
\begin{align*}
M^+ &= \begin{bmatrix} 0&1&-1&1&1 \\ -1&0&1&1&1 \\ -1&1&0&2&2 \\ 1&1&1&-6&6 \end{bmatrix}, \,\,\, M^- &= \begin{bmatrix}  0&-1&1&1&1 \\ 1&0&-1&1&1 \\ 1&-1&0&2&2 \\ -1&-1&-1&-6&6 \end{bmatrix}.
\end{align*}
Both $M^+$ and $M^-$ have a strictly positive kernel element. In particular, $M^+ \mathbf{v}^+ = 0$ and $M^-\mathbf{v}^- = 0$ where $\mathbf{v}^+ = (11,1,6,4,1)^\top  > 0$ and $\mathbf{v}^- =(1,11,6,1,4)^\top > 0 $. Existence of $\mathbf{v}^+$ and $\mathbf{v}^-$ shows that the linear systems 
\[
\{\mathbf{h} : \mathbf{h} \neq 0, \mathbf{h}^\top M^+  \ge 0 \} \, \text{ and }  \, \{\mathbf{h} : \mathbf{h} \neq 0, \mathbf{h}^\top M^-  \ge 0 \}
\]
are infeasible. Indeed, suppose there is a $\mathbf{h}\neq 0$ such that $\mathbf{h}^\top M^+ = \mathbf{y} \ge 0$. As $M^+$ has full rank, $\mathbf{y} \neq 0$. Since $\mathbf{v}^+$ is in the null space of $M^+$, it is orthogonal to the row space of $M^+$. It follows that $\mathbf{y}^\top \mathbf{v}^+ = 0$, but this is a contradiction because $\mathbf{v}^+$ is strictly positive and $\mathbf{y} \neq 0$. An analogous argument applies to the  linear system involving $M^-$.

Translated into cone language, this means $(K_\mathcal{Q}^\ast \cap \interior K_C^\ast) \cup (K_\mathcal{Q}^\ast \cap \interior K_{-C}^\ast) = \varnothing$. By \Cref{thm:cone conditions for chiral recon}, $(\mathcal{A}, \mathcal{Q})$ is not projectively equivalent to a chiral reconstruction. 
\end{example}

In general, if we have $m \geq 3$ cameras, then the hyperplanes with normals $\sigma_i \mathbf{c}_i$ partition $\R^4$ into $2^m$ (possibly empty) regions, each indexed by an element of $\{+,-\}^m$. It can be that $K_\mathcal{Q}^\ast$ lies entirely in a region of mixed signs forcing $K_\mathcal{Q}^\ast \cap (\interior K_{\sigma C}^\ast \cup \interior K_{-\sigma C}^\ast) = \emptyset$. 

For two cameras, this does not happen as we saw in the proof of \Cref{thm:Hartley paper thm}; $\sigma_1 \mathbf{n}_1$ is a non-zero element 
in $(K_\mathcal{Q}^\ast \cap K_{\sigma C}^\ast) \cup (K_\mathcal{Q}^\ast \cap K_{-\sigma C}^\ast)$. This relied crucially on the fact that $\sigma_1 \mathbf{n}_1$ is on the hyperplane with normal $\sigma_1 \mathbf{c}_1$ which is divided into two halfspaces by the hyperplane with normal $\sigma_2 \mathbf{c}_2$. Regardless of which half space $\sigma_1 \mathbf{n}_1$ lies in, it belongs to either $K_{\sigma C}^\ast$ or $K_{-\sigma C}^\ast$, i.e., it is automatically in either the $++$ or $--$ regions of hyperplanes with normal $\sigma_1 \mathbf{c}_1$ and $\sigma_2 \mathbf{c}_2$. This argument works for any number of cameras if $K_{\si N} := \cone(\si_1\mathbf{n}_1, \dots, \si_m\mathbf{n}_m)$ intersects 
$K_{\sigma C}^\ast$ or $K_{-\sigma C}^\ast$ because $K_{\si N }\subseteq K_\mathcal{Q}^\ast$ for any signed reconstruction. 
For $m > 2$ cameras, it can be that $K_{\sigma N}$, and even all of $K_\mathcal{Q}^\ast$, lies in a region of mixed signs of the hyperplane arrangement with oriented normals $\sigma_i \mathbf{c}_i$, as in Example~\ref{ex:signing not enough for 3 cameras}.
% This may not happen when there are at least three cameras, and all of $K_\mathca{Q}^\ast$ can be entirely contained in a region of mixed signs as in Example~\ref{ex:signing not enough for 3 cameras} .} 
% Certainly $\sigma_i \mathbf{n}_i$ can fail to lie in $K_{\sigma C}^\ast$ or $K_{-\sigma C}^\ast$ as the hyperplane $(\sigma_i \mathbf{c}_i)^\perp$ on which it lies will be cut into at least four sign regions by the remaining inequalities,  and it can be that  $\sigma_i \mathbf{n}_i$ is in a region of mixed sign as in Example~\ref{ex:signing not enough for 3 cameras}.

\subsection{Euclidean Reconstructions}
\label{sec:euclideanexistence}

In the previous section, we asked when a projective reconstruction can be transformed to a chiral reconstruction. We now ask the same question for a Euclidean reconstruction of $\mathcal{P}$, by which we mean a projective reconstruction $(\mathcal{A}, \mathcal{Q})$ in which each camera has the form $\begin{bmatrix} R & \mathbf{t}\end{bmatrix}$ where $R \in SO(3)$. 

Unlike for projective reconstructions, it is not true that if a Euclidean reconstruction exists, 
there is always one that is finite. However, this is not a problem since 
our definition of chiral reconstruction allows world points to be infinite 
thus generalizing the old notion of a strong realization.

Proposition 10 in \cite{hartley1998chirality} shows that we can assume $A_1 = \begin{bmatrix}  I & \mathbf{0} \end{bmatrix}$ by applying an appropriate similarity, without affecting chirality. Under this assumption, the following two theorems (whose proofs appear in \Cref{sec:appendix}) answer the above  question for $m=2$ and $m > 2$ views respectively.

\begin{theorem}
\label{thm:calibrated-2-views}
Let $(\{A_1 = \begin{bmatrix} I & \mathbf{0}\end{bmatrix} ,A_2 = \begin{bmatrix} R &\mathbf{t}\end{bmatrix} \}, \mathcal{Q})$ be a signed Euclidean reconstruction of $\mathcal{P}$ with distinct centers. There exists a 
chiral Euclidean reconstruction of $\mathcal{P}$ if and only if $\mathbf{n}_\infty \in K_\mathcal{Q}^\ast \cup K_{-\mathcal{Q}}^\ast$ or $\mathbf{r} := \begin{bmatrix} -\frac{2}{\|\mathbf{t}\|^2} R^\top \mathbf{t} \\1 \end{bmatrix} \in K_\mathcal{Q}^\ast \cup K_{-\mathcal{Q}}^\ast$. Equivalently, if exactly one of the following holds for all $\mathbf{q}_i$:
\[
q_{i4} \ge 0 \;\text{ or } \; q_{i4} \le 0 \; \text{ or } \; \mathbf{r}^\top \mathbf{q}_i \ge 0  \; \text{ or } \; \mathbf{r}^\top \mathbf{q}_i \le 0.
\]
\end{theorem}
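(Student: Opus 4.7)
The plan is to reduce existence of a chiral Euclidean reconstruction to two explicit linear inequalities on the world points by classifying the homographies $H\in\GL_4$ that preserve the Euclidean camera structure and then tracking chirality through each via \Cref{lem:transformations,thm:chiral set of an arrangement}.

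First I would classify the Euclidean-preserving $H$. Writing the top three rows of $H^{-1}$ as $[R'\,|\,\mathbf{t}']$ and its bottom row as $(\mathbf{k}^\top, k_4)$, the camera $A_1H^{-1} = [R'\,|\,\mathbf{t}']$ is Euclidean (up to sign) iff $R'\in O(3)$, and $A_2H^{-1} = [RR'+\mathbf{t}\mathbf{k}^\top\,|\,\cdots]$ requires $(RR'+\mathbf{t}\mathbf{k}^\top)^\top(RR'+\mathbf{t}\mathbf{k}^\top)=I$. Expanding this condition produces $R'^\top R^\top\mathbf{t}\mathbf{k}^\top + \mathbf{k}\mathbf{t}^\top RR'+\|\mathbf{t}\|^2\mathbf{k}\mathbf{k}^\top = 0$; a rank-one analysis of this symmetric equation shows either $\mathbf{k}=\mathbf{0}$ (the \emph{similarity} branch) or $\mathbf{k}$ is a specific scalar multiple of $R'^\top R^\top\mathbf{t}$ (the \emph{twist} branch, in which $\det(RR'+\mathbf{t}\mathbf{k}^\top)=-\det(R')$, forcing a harmless sign flip of $A_2$ to restore $SO(3)$ since $\mathbf{n}_{-A}=\mathbf{n}_A$ and chirality is insensitive to this flip). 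After normalizing by a similarity that absorbs the residual freedom in $R'$, the last row of $H$ is proportional to $\mathbf{n}_\infty$ in the similarity case and to $\mathbf{r}$ in the twist case.

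Second, I would plug each of the two canonical $H$'s into \Cref{lem:transformations}(4). Combined with $\mathbf{n}_\infty^\top(H\mathbf{q})=\mathbf{h}^\top\mathbf{q}$ and the signed hypothesis $\sign(\mathbf{n}_{A_i}^\top\mathbf{q}_k) = \sigma_i$, the chirality inequalities of \Cref{thm:chiral set of an arrangement} applied to $(\mathcal{A}H^{-1}, H\mathcal{Q})$ turn into sign conditions of the form $\delta\,\sigma_i(\mathbf{h}^\top\mathbf{c}_i)(\mathbf{h}^\top\mathbf{q}_k)\ge 0$ together with the same-sign pair condition on the two cameras. Because $\delta:=\det(H^{-1})$, $\sigma_i$, and the center evaluations $\mathbf{h}^\top\mathbf{c}_i$ are all constants once $H$ is fixed, the entire system collapses to the single requirement that $\mathbf{h}^\top\mathbf{q}_k$ has constant sign in $k$, that is, $\mathbf{h}\in K_\mathcal{Q}^\ast\cup K_{-\mathcal{Q}}^\ast$. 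Specializing to $\mathbf{h}=\mathbf{n}_\infty$ and $\mathbf{h}=\mathbf{r}$ yields precisely the two conditions of the theorem; the equivalent rephrasing at the end is simply unpacking cone membership into the four inequalities $q_{i4}\ge 0$, $q_{i4}\le 0$, $\mathbf{r}^\top\mathbf{q}_i\ge 0$, $\mathbf{r}^\top\mathbf{q}_i\le 0$.

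The main obstacle is step one: exhaustively solving $(RR'+\mathbf{t}\mathbf{k}^\top)^\top(RR'+\mathbf{t}\mathbf{k}^\top)=I$ so that no Euclidean-preserving $H$ is missed, and then carefully orchestrating the sign flip on $A_2$ and the similarity pre-composition that absorbs $R'$ so that the twist branch really reduces to the single normal direction $\mathbf{r}$ up to positive scalar. Once that classification is nailed down and the evaluations $\mathbf{r}^\top\mathbf{c}_1$, $\mathbf{r}^\top\mathbf{c}_2$ are computed directly from $\mathbf{r}=(-\tfrac{2}{\|\mathbf{t}\|^2}R^\top\mathbf{t},1)^\top$, the rest of the argument is sign bookkeeping.
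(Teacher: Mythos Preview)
Your overall strategy matches the paper's: classify the Euclidean-preserving homographies (the paper's \Cref{lem:Euclidean camera to quasi}), then feed the two resulting last rows $\mathbf{h}$ into the cone condition of \Cref{thm:cone conditions for chiral recon}. The classification in your step one is essentially the same computation the paper does, only with the extra $R'$ freedom carried along and then quotiented out by similarity; that is fine.

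There is, however, a genuine gap in step two. You assert that once $H$ is fixed, ``the entire system collapses to the single requirement that $\mathbf{h}^\top\mathbf{q}_k$ has constant sign in $k$''. This is not automatic. The system \eqref{eq:samesidesystem}--\eqref{eq:oppositesidesystem} also contains the strict pair condition $(\mathbf{h}^\top\mathbf{c}_1)(\mathbf{h}^\top\mathbf{c}_2)\sigma_1\sigma_2>0$, and the per-point inequalities $\delta\sigma_i(\mathbf{h}^\top\mathbf{c}_i)(\mathbf{h}^\top\mathbf{q}_k)\ge 0$ are only mutually consistent in $i$ if $\delta\sigma_1(\mathbf{h}^\top\mathbf{c}_1)$ and $\delta\sigma_2(\mathbf{h}^\top\mathbf{c}_2)$ share a sign --- which is exactly that pair condition. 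You never check it. The paper handles this in \Cref{lem:hi in center cones} by computing $\mathbf{h}^\top\sigma_j\mathbf{c}_j$ explicitly for each of the four candidate $\mathbf{h}$'s and showing that $\pm\mathbf{n}_\infty$ satisfy the pair condition iff $\sigma_1=\sigma_2$, while $\pm\mathbf{r}$ satisfy it iff $\sigma_1\neq\sigma_2$. Without this case split you have neither direction: for ``if'' you could be asserting chirality from, say, $\mathbf{r}\in K_\mathcal{Q}^\ast$ in a situation where $\sigma_1=\sigma_2$ and $\mathbf{r}$ fails the center condition; for ``only if'' you have not shown that the working $\mathbf{h}$ is necessarily one of $\pm\mathbf{n}_\infty,\pm\mathbf{r}$ \emph{and} satisfies the pair condition, which is what forces the disjunction in the statement.

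The fix is exactly the computation you defer to the last sentence: evaluate $\mathbf{n}_\infty^\top\mathbf{c}_i$ and $\mathbf{r}^\top\mathbf{c}_i$ (using $\mathbf{c}_1=(0,0,0,1)^\top$, $\mathbf{c}_2=(-R^\top\mathbf{t},1)^\top$) and record which branch passes the pair test for each value of $\sigma_1\sigma_2$. That is not mere bookkeeping; it is the hinge of the argument.
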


\begin{theorem}
\label{thm:calibrated-m-views}
Let $(\mathcal{A}, \mathcal{Q})$ be a signed Euclidean reconstruction of $\mathcal{P}$ with $m>2$ cameras, distinct centers, and $A_1 = \begin{bmatrix} I& \mathbf{0}\end{bmatrix}$. There exists a chiral Euclidean reconstruction of $\mathcal{P}$ if and only if $\si_i = \si_j$ for all $1\le i<j \le m$ and either $q_{i4} \ge 0$ for all $i$ or $q_{i4} \le 0$ for all $i$.

% $\mathbf{n}_\infty \in K_\mathcal{Q}^\ast \cup K_{-\mathcal{Q}}^\ast$ and $\si_i = \si_j$ for all $1\le i<j \le m$.
\end{theorem}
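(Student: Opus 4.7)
The plan is to reduce to \Cref{thm:cone conditions for chiral recon}: a chiral reconstruction projectively equivalent to $(\mathcal{A},\mathcal{Q})$ is controlled by the last row $\mathbf{h}$ of the homography $H$, but for the resulting cameras to also be Euclidean, the choice of $H$ is heavily constrained. I claim that under the hypotheses ($m > 2$ cameras, distinct centers, $A_1 = [I\mid\mathbf{0}]$), $H$ must be a similarity, forcing $\mathbf{h}$ to be a nonzero scalar multiple of $\mathbf{n}_\infty$. Once this rigidity is established, each cone membership in \Cref{thm:cone conditions for chiral recon} reduces to a simple sign test.

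To establish the rigidity, write $H^{-1} = \begin{bmatrix} M & \mathbf{v} \\ \mathbf{g}^\top & \mu \end{bmatrix}$. Then $A_1 H^{-1} = [M\mid\mathbf{v}]$ is (projectively) Euclidean iff $M = \lambda R_1'$ for some $\lambda\neq 0$ and $R_1'\in\SO(3)$. For $i > 1$, asking that $A_i H^{-1}$ is a scalar multiple of $[R_i'\mid\mathbf{t}_i']$ and imposing $R_i'^\top R_i' = I$ leads, with $\mathbf{a}_i := R_1'^\top R_i^\top \mathbf{t}_i$, to the identity
\[
\lambda\bigl(\mathbf{a}_i\mathbf{g}^\top + \mathbf{g}\mathbf{a}_i^\top\bigr) + \|\mathbf{a}_i\|^2\,\mathbf{g}\mathbf{g}^\top = (\lambda_i^2 - \lambda^2)\,I.
\]
The left side has rank at most $2$, so $\lambda_i=\pm\lambda$ and the left side must vanish. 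A short case analysis (separating $\mathbf{g}\parallel\mathbf{a}_i$ and $\mathbf{g}\not\parallel\mathbf{a}_i$, in the latter case multiplying by $\mathbf{g}$ and comparing coefficients) shows that for each $i$, either $\mathbf{g}=\mathbf{0}$, or $\mathbf{t}_i=\mathbf{0}$, or $\mathbf{a}_i = -\tfrac{2\lambda}{\|\mathbf{g}\|^2}\mathbf{g}$. The second option places the center of $A_i$ at the origin (coinciding with $A_1$), and the third pins $\tilde{\mathbf{c}}_i$ to the single point $\tfrac{2\lambda}{\|\mathbf{g}\|^2} R_1'\mathbf{g}$ independent of $i$. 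Since $m>2$ with distinct centers, there are two indices $i,j > 1$ whose centers are distinct and nonzero, so both options fail simultaneously: hence $\mathbf{g} = \mathbf{0}$, $H^{-1}$ fixes the plane at infinity, and block inversion yields $\mathbf{h} \propto \mathbf{n}_\infty$.

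Applying \Cref{thm:cone conditions for chiral recon} with $\mathbf{h} = \pm\mathbf{n}_\infty$ and using $\mathbf{n}_\infty^\top \mathbf{q}_k = q_{k4}$ and $\mathbf{n}_\infty^\top(\si_i\mathbf{c}_i) = \si_i$ (since $c_{i4}=1$), the four cone memberships translate into: $\mathbf{n}_\infty\in K_\mathcal{Q}^\ast$ iff $q_{k4}\ge 0$ for all $k$; $\mathbf{n}_\infty\in K_{-\mathcal{Q}}^\ast$ iff $q_{k4}\le 0$ for all $k$; $\mathbf{n}_\infty\in\interior K_{\si C}^\ast$ iff $\si_i = +1$ for all $i$; $\mathbf{n}_\infty\in\interior K_{-\si C}^\ast$ iff $\si_i = -1$ for all $i$. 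Therefore the union-intersection appearing in the cone condition is nonempty exactly when all $\si_i$ agree and all $q_{k4}$ share a common weak sign; and whenever these sign conditions hold, any similarity with $\mathbf{h}=\pm\mathbf{n}_\infty$ (and suitable top-left block) realizes the chiral Euclidean reconstruction. The main obstacle is the rigidity argument forcing $\mathbf{g}=\mathbf{0}$, which is where $m > 2$ genuinely enters; for $m=2$ the third alternative above remains available and produces the additional ``twisted'' option $\mathbf{r}$ appearing in \Cref{thm:calibrated-2-views}.
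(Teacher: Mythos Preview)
Your proposal is correct and follows essentially the same route as the paper. The paper first normalizes (via Proposition~10 of \cite{hartley1998chirality}) so that the target reconstruction also has $A_1=[I\mid\mathbf{0}]$, which forces $H^{-1}=\begin{bmatrix} I & \mathbf{0}\\ \mathbf{v}^\top & \delta\end{bmatrix}$; it then invokes \Cref{lem:Euclidean camera to quasi} to obtain, for each $i>1$, either $\mathbf{v}=\mathbf{0}$ or $\mathbf{v}=-\tfrac{2}{\|\mathbf{t}_i\|^2}R_i^\top\mathbf{t}_i$, and observes that with $m>2$ distinct centers the nonzero options cannot all coincide. You instead keep $H^{-1}$ general, derive the identity $\lambda(\mathbf{a}_i\mathbf{g}^\top+\mathbf{g}\mathbf{a}_i^\top)+\|\mathbf{a}_i\|^2\mathbf{g}\mathbf{g}^\top=(\lambda_i^2-\lambda^2)I$ directly, and reach the same trichotomy and the same distinct-centers contradiction; your argument is in effect an inline reproof of \Cref{lem:Euclidean camera to quasi} without the preliminary normalization, but the substance is identical and the final reduction to \Cref{thm:cone conditions for chiral recon} matches the paper verbatim.
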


These theorems are specializations of \Cref{thm:cone conditions for chiral recon}. Their proofs are based on the observation that restricting the cameras to be Euclidean restricts the class of homographies in \Cref{thm:cone conditions for chiral recon} to four ($m=2$) and two ($m > 2$) discrete choices respectively. The four choices for $m=2$ correspond to the well known twisted pair transformations and the two choices for $m>2$ correspond to reflection. 

\section{Summary} \label{sec:summary}
We introduce the chiral domain of an arrangement of cameras --- a multiview generalization of the definition of chirality that covers all of $\P^3$ --- 
and give a semialgebraic description of this set. 

We define the chiral joint image of a camera arrangement 
to be the image of the chiral domain in the cameras; 
it is the true image of the world in the cameras.
The chiral joint image lives naturally in the joint image variety of the camera 
arrangement, a classical quasi-projective 
variety in multiview geometry. We provide a complete semialgebraic description of the  chiral joint image.

The equations and inequalities describing the chiral joint image are the chiral analogs of the familiar multiview constraints. They lay the foundations for the development of a theory of chiral reconstruction. Our algebraic descriptions of the chiral domain and the chiral joint image can be used to enforce chirality when solving reconstruction or triangulation problems. Similarly, the chiral joint image can be used to constrain the region used for stereo matching.

The chiral domain framework also readily gives rise to quasi-affine transformations which are central to Hartley's work on chirality. This allows us to recover Hartley's chiral inequalities whose feasibility characterizes when a projective reconstruction can be made chiral by a homography.  Our approach provides a simple proof of the hard direction of Hartley's theorem that says that a two-view reconstruction can be made chiral by a homography if and only if the reconstruction satisfies a sign condition. We provide an example to show that such a sign condition does not suffice when there are more than two cameras. By extending the definition of chirality to all of $\mathbb{P}^3$ we are also able to extend Hartley's results to  Euclidean cameras.

% The question of the existence of a chiral reconstruction remains open, i.e., 
% What condition must a set of images matches satisfy for there to exist a chiral reconstruction corresponding to them? 

\section{Technical Proofs} \label{sec:appendix}

This section contains the proofs of statements not proved in the main body of the paper for narrative clarity. The numbering of theorems and lemmas matches those in the main paper and they are presented here in the order in which they appear in the main paper.
In some cases, these proofs rely on additional lemmas (Lemmas
\ref{lem:collinearIFFbaseline},\ref{lem:comp2}, \ref{lem:epipoleCa}, \ref{lem:Euclidean camera to quasi}, and \ref{lem:hi in center cones}) which are only present in this section. As a result, some lemmas appear out of order because they are presented in the order they are needed.

\subsection*{\bf Proofs from~\Cref{sec:chiraljointimage}}

Recall from \Cref{def:c_a} that $\mathbf{a}_i = G_i^{-1} \mathbf{p}_i$ 
and $\mathbf{b}_{ij} = G_i^{-1} \mathbf{t}_i - G_j^{-1} \mathbf{t}_j$. Throughout this section, we denote the baseline of finite cameras $A_i$ and $A_j$ by $l_{ij}$, i.e., 
\begin{align}
    l_{ij} := \{\mathbf{q} \in \P^3 : \exists \lambda_1, \lambda_2 \in \R \text{ s.t. } \mathbf{q} = \lambda_1 \mathbf{c}_1 + \lambda_2 \mathbf{c}_2\}
\end{align}

\begin{lemma}
\label{lem:collinearIFFbaseline}
Let $\mathcal{A} = \{A_1,A_2\}$ be a pair of finite cameras $A_i = \begin{bmatrix} G_i & \mathbf{t}_i\end{bmatrix}$ with distinct centers. Fix $\mathbf{q}\in\P^3\setminus\{\mathbf{c}_1,\mathbf{c}_2\}$ and write $(\mathbf{p}_1,\mathbf{p}_2) \sim  \varphi_\mathcal{A}(\mathbf{q})$ with $A_i\mathbf{q} = \lambda_i \mathbf{p}_i$. The vectors $\mathbf{a}_1, \mathbf{a}_2$ and $\mathbf{b}_{12}$ are collinear in $\R^3$ if and only if $\mathbf{q} \in l_{12}$. In this case, $\mathbf{p}_1 = \mathbf{e}_{12}$ and $\mathbf{p}_2 = \mathbf{e}_{21}$.
\end{lemma}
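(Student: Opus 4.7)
The plan is to interpret ``collinear in $\R^3$'' as all three vectors being scalar multiples of a single nonzero direction (which is the only projectively meaningful notion since $\mathbf{a}_i$ depends on the scaling of $\mathbf{p}_i$), and then to reduce both directions to a simple affine-linearity computation in $\R^3$ by splitting into the cases $q_4\neq 0$ and $q_4=0$. Throughout, I will use the identity $\widetilde{\mathbf{c}}_i = -G_i^{-1}\mathbf{t}_i$ and the relation $\mathbf{b}_{12} = \widetilde{\mathbf{c}}_2 - \widetilde{\mathbf{c}}_1$.

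For the forward direction, write $\mathbf{q} \sim \lambda_1 \mathbf{c}_1 + \lambda_2 \mathbf{c}_2$ with both $\lambda_i\neq 0$, which is forced by $\mathbf{q}\notin\{\mathbf{c}_1,\mathbf{c}_2\}$ and the distinctness of the centers. Since $A_i\mathbf{c}_i=0$, we get $A_1\mathbf{q} = \lambda_2 A_1\mathbf{c}_2 \sim \mathbf{e}_{12}$ and analogously $A_2\mathbf{q}\sim\mathbf{e}_{21}$, proving the ``in this case'' claim about the images. To see collinearity, a direct computation gives $A_1\mathbf{c}_2 = -G_1 G_2^{-1}\mathbf{t}_2 + \mathbf{t}_1 = G_1\mathbf{b}_{12}$, so $\mathbf{a}_1 = G_1^{-1}\mathbf{p}_1 \sim G_1^{-1}(G_1\mathbf{b}_{12}) = \mathbf{b}_{12}$; the same manipulation for $A_2$ yields $\mathbf{a}_2 \sim -\mathbf{b}_{12}$. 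Hence all three are parallel.

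For the converse, start from $A_i\mathbf{q} = \lambda_i\mathbf{p}_i$, which in the dehomogenized form reads $\mathbf{q}_{1:3} - q_4 \widetilde{\mathbf{c}}_i = \lambda_i \mathbf{a}_i$ for $i=1,2$. If $q_4\neq 0$, normalize to $q_4=1$ and observe that $\mathbf{a}_1 \sim \widetilde{\mathbf{q}}-\widetilde{\mathbf{c}}_1$, $\mathbf{a}_2 \sim \widetilde{\mathbf{q}}-\widetilde{\mathbf{c}}_2$, while $\mathbf{b}_{12} = \widetilde{\mathbf{c}}_2 - \widetilde{\mathbf{c}}_1\neq 0$. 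The three being parallel forces $\widetilde{\mathbf{q}}$ onto the affine line spanned by $\widetilde{\mathbf{c}}_1$ and $\widetilde{\mathbf{c}}_2$, which is exactly $l_{12}\cap\R^3$. If $q_4=0$, then $\mathbf{a}_1,\mathbf{a}_2$ are both parallel to $\mathbf{q}_{1:3}\neq 0$, and requiring $\mathbf{b}_{12}\sim\mathbf{q}_{1:3}$ gives $\mathbf{q}\sim(\mathbf{b}_{12},0)^\top = \mathbf{c}_2-\mathbf{c}_1$, which also lies on $l_{12}$.

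The main obstacle is merely bookkeeping: one has to correctly translate the projective equality $\mathbf{p}_i = A_i\mathbf{q}/\lambda_i$ into an affine statement and confirm that ``collinear'' is the right notion at infinity, where $\mathbf{a}_1$ and $\mathbf{a}_2$ are automatically parallel regardless of whether $\mathbf{q}$ is on the baseline, so that $\mathbf{b}_{12}$ genuinely carries the extra information. Once the infinite-point case is handled, the finite-point case is a one-line affine collinearity argument.
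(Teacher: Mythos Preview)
Your proof is correct and complete. The paper argues geometrically: it identifies $(\mathbf{a}_i^\top,0)^\top$ as the intersection of the preimage line $\{A_i\mathbf{q}\sim\mathbf{p}_i\}$ with $L_\infty$ and $(\mathbf{b}_{12}^\top,0)^\top$ as the intersection of the baseline with $L_\infty$, so that collinearity of $\mathbf{a}_1,\mathbf{a}_2,\mathbf{b}_{12}$ forces all three lines to share their point at infinity and hence (since each contains a camera center) to coincide with the baseline. You instead work algebraically, splitting on $q_4$ and reducing the finite case to the affine collinearity of $\widetilde{\mathbf{q}},\widetilde{\mathbf{c}}_1,\widetilde{\mathbf{c}}_2$ via the identity $\lambda_i\mathbf{a}_i=\widetilde{\mathbf{q}}-\widetilde{\mathbf{c}}_i$. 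Your route is more hands-on and makes the epipole identification $\mathbf{e}_{12}=G_1\mathbf{b}_{12}$ explicit; the paper's route is coordinate-free and explains \emph{why} the vectors $\mathbf{a}_i$ carry exactly the ``direction at infinity'' information, which is conceptually illuminating but requires the reader to unpack the preimage-line interpretation. Both are short; yours is perhaps easier to verify line by line.
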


\begin{proof}
We argue geometrically. The vector $(\mathbf{b}_{12}^\top,0)^\top\in \P^3$ is the intersection point of the baseline $l_{12}$ with the hyperplane $L_\infty$. Moreover, a point $\mathbf{p}\in \P^2$ has a $1$-dimensional family of preimages under a finite camera $A$ which is the span of the vector $((G^{-1}\mathbf{p})^\top,0)^\top = (\mathbf{a}^\top,0)^\top \in \P^3$ and the (finite) center of camera $A$, where the camera center cannot be imaged in $A$, of course. Indeed, this follows from the fact that $A(\mathbf{a}^\top,0)^\top = \mathbf{p}$ and the fact that the kernel of $A$ is spanned by its center $\mathbf{c}\in \P^3$. Of course, we could also take the span of any other two points on this line. Below we will see the span of the center $\mathbf{c}$ and a point $\mathbf{q}\in\P^3$ with $A\mathbf{q} \sim \mathbf{p}$.
We now apply these geometric facts to the epipoles and the baseline to show the two implications. 

So first suppose that $\mathbf{a}_1$, $\mathbf{a}_2$, and $\mathbf{b}_{12}$ are collinear. Geometrically, this means that the lines of preimages $\{\mathbf{q}\in\P^3\colon A_i \mathbf{q} \sim \mathbf{p}_i\}$ have the same intersection points with the hyperplane at infinity and that point is also the intersection point of $l_{12}$ and $L_\infty$. Since the baseline contains both centers $\mathbf{c}_1$ and $\mathbf{c}_2$ and the intersection points at infinity coincide, all three lines are equal to the baseline.

Conversely, if $\mathbf{q}$ is on the baseline but not a camera center, then the line spanned by $\mathbf{q}$ and any camera center $\mathbf{c}_i$ is the baseline $l_{ij}$ and the line of preimages $\{\mathbf{q}\in \P^3\colon A_i \mathbf{q} \sim \mathbf{p}_i\}$. \qed
\end{proof}

\begin{lemma}
\label{lem:comp2}
Let $\mathcal{A} = \{A_1,A_2\}$ be a pair of finite cameras with distinct centers. Fix $\mathbf{q}\in\P^3\setminus\{\mathbf{c}_1,\mathbf{c}_2\}$ and write $(\mathbf{p}_1,\mathbf{p}_2) \sim  \varphi_\mathcal{A}(\mathbf{q})$ with $A_i\mathbf{q} = \lambda_i \mathbf{p}_i$. If $\mathbf{q} \notin l_{ij}$, then the following conditions hold:
\begin{align}
    \mathbf{b}_{12}^\top (\mathbf{a}_1 \times \mathbf{a}_2) &= 0, \label{eq:epipolar}\\
    \sign(\lambda_1 q_4) &= \sign\left( (\mathbf{a}_1 \times \mathbf{a}_2 )^\top
    (\mathbf{b}_{12} \times \mathbf{a}_2 ) 
    \right), \label{eq:sign1}\\
    \sign(\lambda_2 q_4) & = \sign \left( 
    (\mathbf{a}_1 \times \mathbf{a}_2 )^\top (\mathbf{b}_{12} \times \mathbf{a}_1 )
    \right), \label{eq:sign2}\\
    \sign(\lambda_1 \lambda_2 ) & =
    \sign\left( ( \mathbf{b}_{12} \times \mathbf{a}_1 )^\top (\mathbf{b}_{12} \times \mathbf{a}_2 )\right) \label{eq:sign3}.
\end{align}
\end{lemma}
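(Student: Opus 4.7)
The heart of the argument is a single vector identity in $\R^3$ relating the two image coordinates to the last coordinate of the world point. Writing $\mathbf{q} = (\tilde{\mathbf{q}}^\top, q_4)^\top$, the equation $A_i \mathbf{q} = G_i \tilde{\mathbf{q}} + q_4 \mathbf{t}_i = \lambda_i \mathbf{p}_i$ gives, after applying $G_i^{-1}$, the expression $\tilde{\mathbf{q}} = \lambda_i \mathbf{a}_i - q_4 G_i^{-1} \mathbf{t}_i$ for each $i$. Subtracting the $i=2$ equation from the $i=1$ equation yields the master identity
\[
\lambda_1 \mathbf{a}_1 - \lambda_2 \mathbf{a}_2 = q_4 \mathbf{b}_{12}.
\]
The plan is to extract all four conclusions from this single identity by taking appropriate inner and cross products with $\mathbf{a}_1$ and $\mathbf{a}_2$.

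I first treat the generic case $q_4 \neq 0$. Taking the inner product of the master identity with $\mathbf{a}_1 \times \mathbf{a}_2$ kills the left-hand side, giving $q_4 (\mathbf{a}_1 \times \mathbf{a}_2)^\top \mathbf{b}_{12} = 0$, which is \eqref{eq:epipolar}. Crossing the master identity on the right by $\mathbf{a}_2$, respectively $\mathbf{a}_1$, yields the two cleaner identities
\[
\lambda_1 (\mathbf{a}_1 \times \mathbf{a}_2) = q_4 (\mathbf{b}_{12} \times \mathbf{a}_2), \qquad \lambda_2 (\mathbf{a}_1 \times \mathbf{a}_2) = q_4 (\mathbf{b}_{12} \times \mathbf{a}_1).
\]
Dotting the first with $\mathbf{a}_1 \times \mathbf{a}_2$ and multiplying by $q_4$ gives $\lambda_1 q_4 \|\mathbf{a}_1 \times \mathbf{a}_2\|^2 = q_4^2 (\mathbf{a}_1 \times \mathbf{a}_2)^\top (\mathbf{b}_{12} \times \mathbf{a}_2)$, which reads off as \eqref{eq:sign1}, and the symmetric step yields \eqref{eq:sign2}. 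Taking the inner product of the two displayed identities against each other gives $\lambda_1 \lambda_2 \|\mathbf{a}_1 \times \mathbf{a}_2\|^2 = q_4^2 (\mathbf{b}_{12} \times \mathbf{a}_1)^\top (\mathbf{b}_{12} \times \mathbf{a}_2)$, which is \eqref{eq:sign3}.

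To pass from these polynomial identities to the sign equalities, I need $\mathbf{a}_1 \times \mathbf{a}_2 \neq 0$. If $\mathbf{a}_1 = \mu \mathbf{a}_2$ for some scalar $\mu$, the master identity collapses to $(\lambda_1 \mu - \lambda_2)\mathbf{a}_2 = q_4 \mathbf{b}_{12}$; since $q_4 \neq 0$ and $\mathbf{b}_{12} \neq 0$ (the latter because the centers are distinct), this would force $\mathbf{a}_2$ parallel to $\mathbf{b}_{12}$, making $\mathbf{a}_1, \mathbf{a}_2, \mathbf{b}_{12}$ collinear. By \Cref{lem:collinearIFFbaseline} this contradicts $\mathbf{q} \notin l_{12}$, so $\mathbf{a}_1 \times \mathbf{a}_2 \neq 0$ and the signs agree in all four conclusions.

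The remaining case $q_4 = 0$ must be handled separately. Now $\tilde{\mathbf{q}} = \lambda_i \mathbf{a}_i$ for both $i$, so $\mathbf{a}_1$ and $\mathbf{a}_2$ are parallel and $\mathbf{a}_1 \times \mathbf{a}_2 = 0$; consequently both sides of \eqref{eq:epipolar}, \eqref{eq:sign1}, and \eqref{eq:sign2} vanish identically. For \eqref{eq:sign3}, the hypothesis $\mathbf{q} \notin l_{12}$ combined with \Cref{lem:collinearIFFbaseline} forces $\mathbf{a}_2$ not parallel to $\mathbf{b}_{12}$, so $\mathbf{b}_{12} \times \mathbf{a}_2 \neq 0$, and substituting $\mathbf{a}_1 = (\lambda_2/\lambda_1) \mathbf{a}_2$ computes the right-hand side of \eqref{eq:sign3} as $(\lambda_2/\lambda_1) \|\mathbf{b}_{12} \times \mathbf{a}_2\|^2$, whose sign is $\sign(\lambda_1 \lambda_2)$. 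The only real obstacle throughout is tracking which non-degeneracy conditions are needed to convert each polynomial identity into a sign statement, and this is handled uniformly by invoking \Cref{lem:collinearIFFbaseline}.
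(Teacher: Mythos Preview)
Your proof is correct and follows essentially the same approach as the paper: both derive the master identity $\lambda_1\mathbf{a}_1 - \lambda_2\mathbf{a}_2 = q_4\mathbf{b}_{12}$, take cross products with $\mathbf{a}_1$ and $\mathbf{a}_2$, split into the cases $q_4\neq 0$ and $q_4=0$, and invoke \Cref{lem:collinearIFFbaseline} to rule out the degenerate collinear situation. The only cosmetic differences are that the paper also crosses with $\mathbf{b}_{12}$ to obtain a third identity used for \eqref{eq:sign3}, whereas you dot the two cross-product identities against each other, and the paper dots each identity with the transpose of its own left-hand side rather than with $\mathbf{a}_1\times\mathbf{a}_2$ and then multiplying by $q_4$; these lead to the same sign conclusions.
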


\begin{proof} 

Since $\mathbf{q} \neq \mathbf{c}_1, \mathbf{c}_2$, we know $\lambda_1 \neq 0, \lambda_2 \neq 0$. Write $\mathbf{q} = (\mathbf{r}, q_4)$ and $\mathbf{b}$ for $\mathbf{b}_{12}$. 
Eliminating $\mathbf{r}$ by taking the difference of the equations $\la_1\mathbf{p}_1 = G_1\mathbf{r} + q_4 \mathbf{t}_1$ and $\la_2\mathbf{p}_2 = G_2\mathbf{r} + q_4 \mathbf{t}_2$, we get $ \lambda_1 G_1^{-1}\mathbf{p}_1 - \lambda_2 G_2^{-1} \mathbf{p}_2 = q_4 \left(G_1^{-1}\mathbf{t}_1 - G_2^{-1}\mathbf{t}_2\right)$, equivalently,
\begin{align} 
\label{eq:linear}
    \lambda_1 \mathbf{a}_1 - \lambda_2 \mathbf{a}_2 = q_4 \mathbf{b}.
\end{align}
Taking cross products with $\mathbf{a}_1, \mathbf{a}_2$, and $\mathbf{b}$ on both sides of (\ref{eq:linear}), we get
\begin{align} 
-\lambda_2 (\mathbf{a}_2 \times \mathbf{a}_1) = q_4 (\mathbf{b} \times \mathbf{a}_1), \\
\lambda_1  (\mathbf{a}_1 \times \mathbf{a}_2) = q_4  (\mathbf{b} \times \mathbf{a}_2), \\
\lambda_1 (\mathbf{b} \times \mathbf{a}_1)  = \lambda_2 (\mathbf{b} \times \mathbf{a}_2).
\end{align}
We now consider two cases:\\

\noindent \textit{Case a:}
Suppose $q_4 \neq 0$. 
Equation~\eqref{eq:linear} implies that 
%$\lambda_1$ or $\lambda_2$ is non-zero and therefore that the three vectors 
$\mathbf{b}$, $\mathbf{a}_1$, and $\mathbf{a}_2$ are coplanar in $\R^3$, 
so that ~\eqref{eq:epipolar} is satisfied. Further, it is straightforward to see from the cross product equations that either $\mathbf{a}_1 \times \mathbf{a}_2$, $\mathbf{b} \times \mathbf{a}_1$ and $\mathbf{b} \times \mathbf{a}_2$ are all equal to zero or none of them are, i.e. either $\mathbf{a}_1, \mathbf{a}_2$, and $\mathbf{b}$ are all pairwise collinear or not. From \Cref{lem:collinearIFFbaseline}, our assumption that $\mathbf{q} \notin l_{ij}$ implies that  $\mathbf{a}_1, \mathbf{a}_2$, and $\mathbf{b}$ are not all collinear. Hence, the equations \eqref{eq:sign1},\eqref{eq:sign2},\eqref{eq:sign3} follow from multiplying each equality above by the transpose of the left hand side. \\

\noindent \textit{Case b:} Suppose $q_4 = 0$. This implies that $\mathbf{a}_1$ and $\mathbf{a}_2$ are collinear in $\R^3$ and $\mathbf{a}_1 \times \mathbf{a}_2 = 0$. This proves~\eqref{eq:epipolar},~\eqref{eq:sign1} and~\eqref{eq:sign2}. Since by 
assumption $\mathbf{b}$ is not collinear with $\mathbf{a}_1$ and $\mathbf{a}_2$~\eqref{eq:sign3} follows by multiplying the third equality above by its right hand side. \qed\end{proof}

\begin{remark}
We comment that \Cref{lem:comp2} above is effectively performing quantifier elimination on the conditions given in \cite[Theorem 4]{WernerPajdla2001}. Indeed there exist scalars $w, \rho > 0$ such that
\begin{align}
    w \mathbf{p}_1 =  G \mathbf{p}_2 + \rho \mathbf{t}
\end{align}
if and only if 
\begin{align}
    \la_1 \mathbf{p}_1 =  \la_2 G \mathbf{p}_2 + q_4 \mathbf{t}
\end{align}
for some scalars $\la_1, \la_2, q_4$ where $\la_1q_4 > 0$, $\la_2 q_4 > 0$,  and $\la_1\la_2 > 0$. We note this equation  is equivalent to \Cref{eq:linear} above for cameras $A_1 = \begin{bmatrix} G & \mathbf{t}\end{bmatrix}$, $A_2 = \begin{bmatrix} I & \mathbf{0}\end{bmatrix}$. We have shown that the signs of these products may be computed directly from the image data $\mathbf{p}_i$ and camera data $A_i$. 
\end{remark}

{
\renewcommand{\thelemma}{\ref{lem:worldpointCa2}}
\addtocounter{lemma}{-1}
\begin{lemma}
Let $\mathcal{A} = \{A_1,\ldots,A_m\}$ be an arrangement of finite cameras such that $D_\mathcal{A}$ is nonempty. If the centers of $\mathcal{A}$ are not collinear, then  
\begin{align}
\mathcal{X}_\mathcal{A} = \mathcal{J}_\mathcal{A}  \cap C_\mathcal{A}. \label{eq:XAiffCA}
\end{align}
If the centers are collinear, then set 
$\mathbf{e} := (\mathbf{e}_1, \dots, \mathbf{e}_m)$ to be the image of the 
common baseline under $\varphi_\mathcal{A}$. 
Then 
\begin{align}
\mathcal{X}_\mathcal{A} \minus \{\mathbf{e}
\} = \left(\mathcal{J}_\mathcal{A}  \cap C_\mathcal{A}\right) \minus  \{\mathbf{e}\}
\end{align}
In both cases, $\overline{\mathcal{X}}_\mathcal{A} \subseteq C_\mathcal{A}$.
\end{lemma}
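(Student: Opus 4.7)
The plan is to translate the sign conditions from the defining description of the chiral domain, $(\mathbf{n}_\infty^\top \mathbf{q})(\mathbf{n}_i^\top \mathbf{q})\geq 0$ and $(\mathbf{n}_i^\top \mathbf{q})(\mathbf{n}_j^\top \mathbf{q})\geq 0$ given by \Cref{thm:chiral set of an arrangement}, into the image-space inequalities defining $C_\mathcal{A}$, using the auxiliary \Cref{lem:comp2}. The bridge is the observation that if $A_i\mathbf{q}=\lambda_i\mathbf{p}_i$ then $\mathbf{n}_i^\top\mathbf{q} = \det(G_i)\,\lambda_i\,p_{i3}$ and $\mathbf{n}_\infty^\top\mathbf{q}=q_4$. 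Therefore, modulo positive factors, the sign of $(\mathbf{n}_\infty^\top\mathbf{q})(\mathbf{n}_i^\top\mathbf{q})$ equals the sign of $\det(G_i)\,p_{i3}\,(\lambda_i q_4)$, and the sign of $(\mathbf{n}_i^\top\mathbf{q})(\mathbf{n}_j^\top\mathbf{q})$ equals the sign of $\det(G_i)\det(G_j)\,p_{i3}p_{j3}\,(\lambda_i\lambda_j)$. Then \Cref{lem:comp2} provides precisely the identifications
\[
\sign(\lambda_i q_4) = \sign\!\bigl((\mathbf{a}_i\times\mathbf{a}_j)^\top(\mathbf{b}_{ij}\times\mathbf{a}_j)\bigr), \quad
\sign(\lambda_i\lambda_j) = \sign\!\bigl((\mathbf{b}_{ij}\times\mathbf{a}_i)^\top(\mathbf{b}_{ij}\times\mathbf{a}_j)\bigr),
\]
which convert each chiral-domain inequality into exactly the corresponding $C_\mathcal{A}$ inequality.

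For the forward inclusion $\mathcal{X}_\mathcal{A} \subseteq \mathcal{J}_\mathcal{A}\cap C_\mathcal{A}$, take $\mathbf{p}=\varphi_\mathcal{A}(\mathbf{q})$ with $\mathbf{q}\in D_\mathcal{A}$; clearly $\mathbf{p}\in\mathcal{J}_\mathcal{A}$. If $\mathbf{q}$ is not on any baseline $l_{ij}$, then by \Cref{thm:chiral set of an arrangement} the sign conditions on $\mathbf{q}$ hold and, via the correspondence above, all $C_\mathcal{A}$ inequalities follow. If $\mathbf{q}$ lies on some $l_{ij}$, then by \Cref{lem:collinearIFFbaseline} the vectors $\mathbf{a}_i,\mathbf{a}_j,\mathbf{b}_{ij}$ are collinear in $\R^3$, so the relevant cross products vanish and the inequalities hold trivially; only the non-collinear case has to be treated in earnest. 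For $\mathcal{J}_\mathcal{A}\cap C_\mathcal{A}\subseteq \mathcal{X}_\mathcal{A}$ (outside of $\mathbf{e}$ in the collinear case), take $\mathbf{p}\in\mathcal{J}_\mathcal{A}\cap C_\mathcal{A}$ and pick any preimage $\mathbf{q}\in\P^3$, choosing a representative in $\R^4$. Reading the identifications above backwards, the $C_\mathcal{A}$ inequalities say that the products $(\mathbf{n}_\infty^\top\mathbf{q})(\mathbf{n}_i^\top\mathbf{q})$ and $(\mathbf{n}_i^\top\mathbf{q})(\mathbf{n}_j^\top\mathbf{q})$ are nonnegative, so all the numbers $\mathbf{n}_\infty^\top\mathbf{q},\mathbf{n}_1^\top\mathbf{q},\ldots,\mathbf{n}_m^\top\mathbf{q}$ share a common sign (or are zero). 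Replacing $\mathbf{q}$ by $-\mathbf{q}$ if necessary, we may assume they are all $\geq 0$, which by \Cref{thm:chiral set of an arrangement} and the polyhedral description of $D_\mathcal{A}$ as the projectivization of $Q\cup -Q$ places $\mathbf{q}\in D_\mathcal{A}$, hence $\mathbf{p}\in\mathcal{X}_\mathcal{A}$.

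The collinear case is handled by carving out the single point $\mathbf{e}=\varphi_\mathcal{A}(l)$, the common image of the shared baseline. Any $\mathbf{p}\neq\mathbf{e}$ in $\mathcal{J}_\mathcal{A}$ has a preimage off every baseline, so \Cref{lem:comp2} applies and the argument above goes through unchanged. The points of $\mathbf{e}$ require separate treatment (and indeed $\mathbf{e}$ may lie in $C_\mathcal{A}$ without lying in $\mathcal{X}_\mathcal{A}$); this is why the statement explicitly removes $\mathbf{e}$ in the collinear setting. Finally, the containment $\overline{\mathcal{X}}_\mathcal{A}\subseteq C_\mathcal{A}$ follows immediately: $C_\mathcal{A}$ is defined by non-strict polynomial inequalities on a product of projective spaces and is therefore Euclidean-closed, and $\mathcal{X}_\mathcal{A}\subseteq C_\mathcal{A}$ by the two equalities just established (including the boundary point $\mathbf{e}$, which lies in $C_\mathcal{A}$ because its $C_\mathcal{A}$ inequalities reduce to $0\geq 0$ by \Cref{lem:collinearIFFbaseline}).

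The main technical obstacle I anticipate is the sign-matching in the reverse inclusion when some $\mathbf{n}_i^\top\mathbf{q}$ vanishes (i.e., the preimage sits on a principal plane or at infinity): the $C_\mathcal{A}$ inequalities then degenerate to equalities and the ``common sign'' argument must be supplemented by the remark following \Cref{thm:chiral set of an arrangement}, that $D_\mathcal{A}$ is intrinsically the polyhedral set $\P(Q\cup -Q)$. The other delicate bookkeeping is keeping track of the factor $\det(G_i)$ in the inequalities defining $C_\mathcal{A}$ and verifying that the ``up to positive constants'' step is legitimate, which is what the $\det(G_i)p_{i3}$ prefactors in \Cref{def:c_a} are designed to accomplish.
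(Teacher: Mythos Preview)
Your approach is the same as the paper's: translate the inequalities of \Cref{thm:chiral set of an arrangement} into image space via the identity $\mathbf{n}_i^\top\mathbf{q}=\det(G_i)\lambda_i p_{i3}$, then use \Cref{lem:comp2} to match signs with the expressions in $C_\mathcal{A}$. The forward inclusion and the closure argument $\overline{\mathcal{X}}_\mathcal{A}\subseteq C_\mathcal{A}$ are fine.

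There is, however, a gap in your reverse inclusion in the noncollinear case. You assert that ``the $C_\mathcal{A}$ inequalities say that the products $(\mathbf{n}_\infty^\top\mathbf{q})(\mathbf{n}_i^\top\mathbf{q})$ and $(\mathbf{n}_i^\top\mathbf{q})(\mathbf{n}_j^\top\mathbf{q})$ are nonnegative,'' but \Cref{lem:comp2} applies to a pair $(i,j)$ only when $\mathbf{q}\notin l_{ij}$. Even with noncollinear centers, the preimage $\mathbf{q}$ of a point $\mathbf{p}\in\mathcal{J}_\mathcal{A}$ may lie on \emph{some} baseline $l_{ij}$; for that pair the $C_\mathcal{A}$ inequalities collapse to $0\geq 0$ and carry no sign information. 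You handle this correctly in the forward direction but not in the reverse one. The paper closes this gap with two observations that exploit noncollinearity: (1) for each $i$ there exists $j$ with $\mathbf{q}\notin l_{ij}$, which recovers $\sign(\lambda_i q_4)$ and hence $(\mathbf{n}_\infty^\top\mathbf{q})(\mathbf{n}_i^\top\mathbf{q})\geq 0$; and (2) for each pair $(i,j)$ with $\mathbf{q}\in l_{ij}$ there exists $k$ off that line, so $\mathbf{q}\notin l_{ik}\cup l_{jk}$ and one recovers $\sign(\lambda_i\lambda_j)=\sign(\lambda_i\lambda_k)\cdot\sign(\lambda_j\lambda_k)$ from the $C_\mathcal{A}$ inequalities for $(i,k)$ and $(j,k)$. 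Observation (2) is genuinely needed when $q_4=0$, since then observation (1) alone gives no control over $\sign(\lambda_i\lambda_j)$. This is precisely where the noncollinearity hypothesis enters the proof, and it is absent from your sketch; the obstacle you anticipate (vanishing of $\mathbf{n}_i^\top\mathbf{q}$) is a different, and less serious, issue.
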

}

\begin{proof}
Suppose $\mathbf{p} = (\mathbf{p}_1, \dots , \mathbf{p}_m) = \varphi_\mathcal{A}(\mathbf{q})$ for a $\mathbf{q} = (\mathbf{r}, q_4)$ in $\P^3$. As before, we write $\mathbf{b}_{ij} = G_i^{-1}\mathbf{t}_i - G_j^{-1}\mathbf{t}_j$ and $\mathbf{a}_i = G_i^{-1}\mathbf{p}_i$. It suffices to show that $\mathbf{p} \in \mathcal{J}_\mathcal{A} \cap C_\mathcal{A}$ if and only if $\mathbf{p} \in \mathcal{X}_\mathcal{A}$. Recall that the principal ray of camera $A_i$ is given by $\det(G_i) (A_i)_{3\bullet}$ and $p_{i3} = (A_i)_{3,\bullet}\mathbf{q}$. By \Cref{thm:chiral set of an arrangement}, we know $\mathbf{p}\in \mathcal{X}_\mathcal{A}$ if and only if 
\begin{align}
\det(G_i)\lambda_ip_{i3}q_4 &\geq 0 \label{eq:giliq4}\\
\det(G_i)\det(G_j)\lambda_ip_{i3}\lambda_jp_{j3} &\geq 0 \label{eq:gigjlilj}
\end{align}
for all $i,j$. We will show that $\mathbf{p}$ satisfies these inequalities if and only if $\mathbf{p} \in \mathcal{J}_\mathcal{A} \cap C_\mathcal{A}$, i.e., if $\mathbf{p}$ satisfies the inequalities
\begin{align}
\det(G_i)p_{i3}(\mathbf{a}_i \times \mathbf{a}_j )^\top
    (\mathbf{b}_{ij} \times \mathbf{a}_i ) &\geq 0 \label{eq:cagi}  \\
\det(G_i)\det(G_j)p_{i3}p_{j3} ( \mathbf{b}_{ij} \times \mathbf{a}_i )^\top (\mathbf{b}_{ij} \times \mathbf{a}_j) &\geq 0 \label{eq:cagigj} 
\end{align}
for all $i,j$.
We make some observations that follow for all $\mathbf{q}$ when the cameras in $\mathcal{A}$ are noncollinear. \begin{enumerate}
    \item For each camera $A_i$, there is some camera $A_j$ such that $\mathbf{q} \notin l_{ij}$. Indeed, fixing $i$, the pencil of lines $l_{ij}$ are not all identical, hence $\mathbf{q}$ cannot lie on all of them.  \Cref{lem:comp2} therefore implies that 
    \begin{align}
    \sign(\la_iq_4) = \sign( \mathbf{a}_i \times \mathbf{a}_j)^\top(\mathbf{b}_{ij} \times \mathbf{a}_i) .
\end{align}
    \item For each pair of cameras $\{A_i,A_j\}$ such that $\mathbf{q} \in l_{ij}$, there is a camera $A_k$ such that $\mathbf{q} \notin l_{ik}$ and $\mathbf{q} \notin l_{jk}$. This follows from noncollinearity because for every line $l_{ij}$, there must be some center $\mathbf{c}_k$ not on this line. \Cref{lem:comp2} therefore implies that 
\begin{align}
    \sign(\la_i\la_j) &= \sign(\la_i\la_j\la_k^2) \\
                     &= \sign(\la_i\la_k)\sign(\la_j\la_k) \\
                     &= \sign( \mathbf{b}_{ik} \times \mathbf{a}_i )^\top (\mathbf{b}_{ik} \times \mathbf{a}_k)* \nonumber \\
                     &( \mathbf{b}_{jk} \times \mathbf{a}_j )^\top (\mathbf{b}_{jk} \times \mathbf{a}_k).
\end{align}
\end{enumerate}
Suppose $\mathbf{p}$ satisfies the inequalities (\ref{eq:giliq4}) and (\ref{eq:gigjlilj}). Then for every pair $i,j$ either 
\begin{align}
\det(G_i)p_{i3}(\mathbf{a}_i \times \mathbf{a}_j )^\top(\mathbf{b}_{ij} \times \mathbf{a}_i ) = 0 \text{ if $\mathbf{q} \in l_{ij}$ or} \\
\sign \left(\det(G_i)p_{i3}(\mathbf{a}_i \times \mathbf{a}_j )^\top
    (\mathbf{b}_{ij} \times \mathbf{a}_i )\right)=&  \nonumber \\
    \sign(\det(G_i)\lambda_ip_{i3}q_4 ) \ge &  0
\end{align}
Similar reasoning shows that (\ref{eq:cagigj}) holds for all $i,j$. 
Conversely, suppose $\mathbf{p}$ satisfies the inequalities (\ref{eq:cagi}) and (\ref{eq:cagigj}). From the observations above, we see that $\sign(\la_iq_4)$ and $\sign(\la_i\la_j)$ can be inferred from the inequalities (\ref{eq:cagi}) and (\ref{eq:cagigj}). Hence (\ref{eq:giliq4}) and (\ref{eq:gigjlilj}) hold, meaning $\mathbf{p} \in\mathcal{X}_\mathcal{A}$. We conclude that  
\[
\mathcal{J}_\mathcal{A} \cap C_\mathcal{A} =  \mathcal{X}_\mathcal{A}
\]
In particular, this means $\mathcal{X}_\mathcal{A} \subseteq C_\mathcal{A}$, so $\overline{\mathcal{X}_\mathcal{A}} \subseteq C_\mathcal{A}$ because $C_\mathcal{A}$ is closed. 
The above argument holds for all $\mathbf{p}$ such that its preimage under $\varphi_\mathcal{A}$ is a unique $\mathbf{q}$. For collinear cameras this is true for all $\mathbf{p} \neq \mathbf{e}$, hence the only point that must be removed from (\ref{eq:XAiffCA}) is $\mathbf{e}$. \qed
\end{proof}

\begin{lemma}
\label{lem:epipoleCa}
Let $\mathcal{A} = \{A_1,A_2\}$ be a pair of finite cameras.
If $\mathbf{c}_j$ has nonnegative depth in the other camera $A_i$, then $E_j$ is contained in $C_\mathcal{A}$. Otherwise $(\mathbf{e}_{12},\mathbf{e}_{21})$ is the only point in $E_j$ that lies in $C_\mathcal{A}$. 
\end{lemma}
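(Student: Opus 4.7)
My plan is to exploit the fact that for two cameras, the set $E_j$ consists of tuples that fix one image at an epipole, which forces strong algebraic simplifications in the inequalities defining $C_\mathcal{A}$. Without loss of generality, I will treat the case $j=2$; the case $j=1$ is symmetric. So I take $\mathbf{p} = (\mathbf{e}_{12},\mathbf{p}_2) \in E_2$ with $\mathbf{p}_2 \in \P^2$ arbitrary.

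The first (and really only nontrivial) step is to compute $\mathbf{a}_1 = G_1^{-1} \mathbf{e}_{12}$. Using $\mathbf{e}_{12} = A_1 \mathbf{c}_2 = -G_1 G_2^{-1}\mathbf{t}_2 + \mathbf{t}_1$, this gives $\mathbf{a}_1 = G_1^{-1}\mathbf{t}_1 - G_2^{-1}\mathbf{t}_2 = \mathbf{b}_{12}$. From this identity the three inequalities in $C_\mathcal{A}$ (one for $(i,j)=(1,2)$, one for $(2,1)$, and the biquadratic one) simplify dramatically: the two inequalities that contain a factor $\mathbf{b}_{12}\times \mathbf{a}_1$ (or $\mathbf{a}_2\times \mathbf{a}_1 \cdot \mathbf{b}_{21}\times \mathbf{a}_1$, which also becomes a multiple of $\mathbf{b}_{12}\times \mathbf{b}_{12}$) reduce to $0 \ge 0$. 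The single surviving inequality collapses to
\[
\det(G_1)\,e_{12,3}\,(\mathbf{b}_{12}\times \mathbf{a}_2)^\top(\mathbf{b}_{12}\times \mathbf{a}_2) \;\ge\; 0,
\]
that is, $\det(G_1)\,e_{12,3}\,\|\mathbf{b}_{12}\times \mathbf{a}_2\|^2 \geq 0$.

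Next I relate the scalar $\det(G_1)\, e_{12,3}$ to the depth of $\mathbf{c}_2$ in $A_1$. Since $e_{12,3} = (A_1)_{3,\bullet}\mathbf{c}_2$, we have $\det(G_1)\,e_{12,3} = \mathbf{n}_1^\top \mathbf{c}_2$, while $\mathbf{n}_\infty^\top \mathbf{c}_2 = 1 > 0$ by our chosen $\R^4$-representative of the center. Hence $\operatorname{depth}(\mathbf{c}_2;A_1)\ge 0$ if and only if $\det(G_1)\, e_{12,3}\ge 0$. Now a case split finishes the argument: if $\mathbf{c}_2$ has nonnegative depth in $A_1$, then the surviving inequality is satisfied for every choice of $\mathbf{p}_2$, so $E_2 \subseteq C_\mathcal{A}$. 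Otherwise $\det(G_1)\, e_{12,3} < 0$, and the inequality fails unless $\mathbf{b}_{12}\times \mathbf{a}_2 = 0$; that vanishing is equivalent to $\mathbf{a}_2$ being parallel to $\mathbf{b}_{12} = -G_2^{-1}\mathbf{e}_{21}$, i.e., $\mathbf{p}_2 = \mathbf{e}_{21}$ in $\P^2$. Thus the only point of $E_2$ that lies in $C_\mathcal{A}$ is $(\mathbf{e}_{12},\mathbf{e}_{21})$.

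The main (minor) obstacle is bookkeeping: making sure all three inequalities in $C_\mathcal{A}$ are correctly evaluated under the substitution $\mathbf{a}_1 = \mathbf{b}_{12}$, and checking carefully that $\mathbf{b}_{12}\times \mathbf{a}_2 = 0$ really singles out $\mathbf{p}_2 = \mathbf{e}_{21}$ (using that $G_2$ is invertible so that parallelism transfers between $\mathbf{a}_2$ and $\mathbf{p}_2$). Apart from this, the argument is direct algebra followed by a two-case analysis.
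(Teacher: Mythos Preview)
Your proposal is correct and follows essentially the same approach as the paper: reduce to $j=2$, observe that $\mathbf{a}_1$ is parallel to $\mathbf{b}_{12}$ so two of the three defining inequalities of $C_\mathcal{A}$ vanish identically, and analyze the remaining one via the sign of $\det(G_1)e_{12,3}$ (equivalently, the depth of $\mathbf{c}_2$ in $A_1$) and the vanishing condition $\mathbf{b}_{12}\times\mathbf{a}_2=0$. The only cosmetic difference is that the paper keeps an explicit scalar $\lambda_1$ in front of the epipole representative, whereas you fix the representative $\mathbf{e}_{12}=A_1\mathbf{c}_2$; since the inequalities are homogeneous of even degree in each $\mathbf{p}_i$, this makes no difference.
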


\begin{proof}
Without loss of generality, we can assume $j=2$.
Let  $A_i = \begin{bmatrix} G_i & \mathbf{t}_i \end{bmatrix}$ for $i=1,2$. We write $\tilde{\mathbf{c}}_i = -G_i^{-1} \mathbf{t}_i$ and $\mathbf{b} = \tilde{\mathbf{c}}_2 - \tilde{\mathbf{c}}_1$. 
Let $\mathbf{s}_1 = G_1\tilde{\mathbf{c}}_2 + \mathbf{t}_1 = G_1\left( -G_2^{-1}\mathbf{t}_2 + G_1^{-1} \mathbf{t}_1\right)$ $ = G_1\mathbf{b}$. Then, the image of $\mathbf{c}_2$ in $A_1$ is $\mathbf{e}_{12} = \lambda_1 \mathbf{s}_1$. Similarly, let $\mathbf{s}_2 = G_2 \tilde{\mathbf{c}}_1 + \mathbf{t}_2 = G_2(-G_1^{-1} \mathbf{t}_1 + G_2^{-1}\mathbf{t}_2) = -G_2\mathbf{b}$. Then, the image of $\mathbf{c}_1$ in $A_2$ is $\mathbf{e}_{21} = \lambda_2 \mathbf{s}_2$.

Now if $\mathbf{p}_1 = \mathbf{e}_{12} = \lambda_1  G_1\mathbf{b}$, then $\mathbf{a}_1 = \lambda_1 \mathbf{b}$ and $\mathbf{b} \times \mathbf{a}_1 = 0$. Which means that the only inequality defining $C_\mathcal{A}$ not identically equal to zero is 
\begin{align}\label{eq:theoneofthree}
   &  \det(G_1)p_{13} (\mathbf{a}_1\times \mathbf{a}_2)^\top (\mathbf{b} \times \mathbf{a}_2) \geq 0.
\end{align}

Plugging $\mathbf{a}_1 = \lambda_1 \mathbf{b}$ and $\mathbf{p}_1 = \mathbf{e}_{12} = \lambda_1 \mathbf{s}_1$ in the above we get
\begin{align}
\det(G_1)\lambda_1 s_{13} (\lambda_1 \mathbf{b} \times \mathbf{a}_2)^\top (\mathbf{b} \times \mathbf{a}_2) \geq 0\\
\det(G_1) \lambda_1^2 s_{13}\|\mathbf{b} \times \mathbf{a}_2\|^2 \geq 0\\
\det(G_1) s_{13}\|\mathbf{b} \times \mathbf{a}_2\|^2 \geq 0
\end{align}
This can be satisfied in two ways, namely $\mathbf{b} \times \mathbf{a}_2 = 0$ or $\det(G_1) s_{13} \geq 0$.\\

\noindent \textit{Case 1:} Suppose $\mathbf{b} \times \mathbf{a}_2 = 0$. Then since 
    \begin{align}
      \mathbf{b} \times \mathbf{a}_2 = 0
\iff  \mathbf{a}_2 \sim \mathbf{b} 
\iff  \mathbf{p}_2 \sim G_2 \mathbf{b} \sim \mathbf{e}_{21},
\end{align}
the condition $\mathbf{b}\times \mathbf{a}_2 = 0$ is the same as 
$\mathbf{p}_2 \sim \mathbf{e}_{21}$.\\

\noindent \textit{Case 2:} Now suppose $\det(G_1) s_{13} \geq 0$. Observe that the depth of $\mathbf{c}_2$ in $A_1$ is
\begin{align}
    \operatorname{depth} (\mathbf{c}_2; A_1) = \frac{1}{|\det(G_1)|
    \|G^1_{3,\bullet}\|
    } \det(G_1)s_{13}.
\end{align}

Therefore, $\det(G_1) s_{13}\geq 0$ if and only if $\mathbf{c}_2$ has nonnegative depth in $A_1$. In this case, the inequality \eqref{eq:theoneofthree} imposes no constraints on $\mathbf{p}_2$ as claimed.
\qed\end{proof}

{
\renewcommand{\thelemma}{\ref{lem:eaplus}}
\addtocounter{lemma}{-1}
\begin{lemma}
Let $\mathcal{A} = \{A_1,\ldots,A_m\}$ be an arrangement of finite cameras. If the centers $\mathbf{c}_i$ are not collinear then $E_\mathcal{A} \cap C_\mathcal{A} = E^+_\mathcal{A}$.  Otherwise, 
 $E_\mathcal{A} \cap C_\mathcal{A} = E^+_\mathcal{A} \cup \{\mathbf{e}
 \}$.
\end{lemma}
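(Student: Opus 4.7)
The plan is to prove both containments by reducing the multi-view inequalities defining $C_\mathcal{A}$ to the pairwise statement in~\Cref{lem:epipoleCa}, using~\Cref{lem:comp2} as the dictionary between the chirality of a world point and the sign conditions imposed on its image pair.

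For the forward containment $E^+_\mathcal{A}\subseteq C_\mathcal{A}$, I would fix $\mathbf{p}\in E_j$ with $\mathbf{c}_j\in D_\mathcal{A}$. Since $\mathbf{c}_j$ is finite with last coordinate $1$, the polyhedral description in the remark after~\Cref{thm:chiral set of an arrangement} gives $\mathbf{n}_i^\top\mathbf{c}_j\ge 0$ for every $i\ne j$. Every inequality in $C_\mathcal{A}$ involves only two cameras, so it suffices to check pairs. For a pair $\{A_i,A_j\}$, the restriction of $\mathbf{p}$ is $(\mathbf{e}_{ij},\mathbf{p}_j)$ with $\mathbf{p}_j$ free, and~\Cref{lem:epipoleCa} applied to this pair (with $\mathbf{c}_j$ of nonnegative depth in $A_i$) places this restriction in $C_{\{A_i,A_j\}}$. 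For a pair $\{A_i,A_k\}$ with $i,k\ne j$, the restriction is $\varphi_{\{A_i,A_k\}}(\mathbf{c}_j)=(\mathbf{e}_{ij},\mathbf{e}_{kj})$; if $\mathbf{c}_j\in l_{ik}$ then by~\Cref{lem:collinearIFFbaseline} the vectors $\mathbf{a}_i,\mathbf{a}_k,\mathbf{b}_{ik}$ are collinear and the three pair expressions vanish identically, and otherwise~\Cref{lem:comp2} identifies (after the $\det(G)$ and $p_{\cdot,3}$ factors) the three pair expressions with $\sign(\mathbf{n}_i^\top\mathbf{c}_j)$, $\sign(\mathbf{n}_k^\top\mathbf{c}_j)$, and their product, all nonnegative. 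In the collinear case, $\mathbf{e}$ restricts on every pair to $(\mathbf{e}_{ij},\mathbf{e}_{ji})$, which lies in $C_{\{A_i,A_j\}}$ by~\Cref{lem:epipoleCa}; and $\mathbf{e}\in E_j$ for every $j$ since $\mathbf{e}_k=\mathbf{e}_{kj}$ whenever all centers lie on the common baseline.

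For the reverse containment, I would take $\mathbf{p}\in E_j\cap C_\mathcal{A}$ and set $I:=\{i\ne j:\mathbf{n}_i^\top\mathbf{c}_j<0\}$, aiming to show either $I=\varnothing$ (so $\mathbf{c}_j\in D_\mathcal{A}$ and $\mathbf{p}\in E^+_\mathcal{A}$) or that the centers are collinear and $\mathbf{p}=\mathbf{e}$. For every $i\in I$,~\Cref{lem:epipoleCa} on the pair $\{A_i,A_j\}$ forces $\mathbf{p}_j=\mathbf{e}_{ji}$. Suppose $I\ne\varnothing$ and fix $i_0\in I$. If there exists $k\ne i_0,j$ with $\mathbf{c}_j,\mathbf{c}_{i_0},\mathbf{c}_k$ not collinear, then $\mathbf{c}_j\notin l_{i_0 k}$ and~\Cref{lem:comp2} applied to the pair $\{A_{i_0},A_k\}$ at $\mathbf{q}=\mathbf{c}_j$ (which has $q_4=1>0$) gives
\[
\sign\bigl(\det(G_{i_0})\,p_{i_0,3}\,(\mathbf{a}_{i_0}\times\mathbf{a}_k)^\top(\mathbf{b}_{i_0 k}\times\mathbf{a}_k)\bigr)=\sign(\mathbf{n}_{i_0}^\top\mathbf{c}_j)<0,
\]
contradicting the pair $(i_0,k)$ inequality in $C_\mathcal{A}$. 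If no such $k$ exists, every remaining center lies on the line through $\mathbf{c}_j$ and $\mathbf{c}_{i_0}$, so the whole arrangement is collinear; then $\mathbf{e}_{ji_0}=\mathbf{e}_j$ and $\mathbf{e}_{ij}=\mathbf{e}_i$ for all $i\ne j$, giving $\mathbf{p}=\mathbf{e}$.

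The hard part is the sign bookkeeping in the reverse direction. I must check that the cross-product expression $(\mathbf{a}_{i_0}\times\mathbf{a}_k)^\top(\mathbf{b}_{i_0 k}\times\mathbf{a}_k)$ is \emph{strictly} nonzero so that the contradiction is genuine; by~\Cref{lem:comp2} its sign equals $\sign(\lambda_{i_0})$, and $\lambda_{i_0}\ne 0$ because $\lambda_{i_0}=0$ would force $\mathbf{c}_j=\mathbf{c}_{i_0}$, violating the distinct-centers hypothesis. A subtler accounting step is the collinear fallback: the failure to find a suitable $k$ for a single index $i_0$ must be parlayed into full collinearity of all $m$ centers, which is exactly what is needed to identify $\mathbf{p}$ with the image $\mathbf{e}$ of the common baseline.
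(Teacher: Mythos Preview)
Your proof is correct and follows essentially the same approach as the paper: reduce the $C_\mathcal{A}$ inequalities to pairs, invoke \Cref{lem:epipoleCa} on pairs containing $A_j$ and \Cref{lem:comp2} on the remaining pairs, and for the reverse containment pick an auxiliary camera $A_k$ so that $\mathbf{c}_j,\mathbf{c}_{i_0},\mathbf{c}_k$ are non-collinear to extract a strictly violated inequality. Your treatment is somewhat more explicit than the paper's in handling the degenerate case $\mathbf{c}_j\in l_{ik}$ in the forward direction and in verifying strict nonvanishing of the cross-product expression in the reverse direction, but the underlying argument is the same.
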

}

\begin{proof}
We first show that $E_\mathcal{A}^+$ is in $C_\mathcal{A}$.
Since the inequalities defining $C_\mathcal{A}$ only depend on pairs of cameras, we can restrict to the case of every pair $\{A_k,A_\ell\}$. If none of the indices are equal to $j$ the cameras $A_k$ and $A_\ell$ see the center of camera $A_j$ and the $C_{\{A_k,A_\ell\}}$ inequalities are satisfied if $\mathbf{c}_j\in D_\mathcal{A}$. If one of the indices is equal to $j$, we use the previous \Cref{lem:epipoleCa}. We conclude that if $\mathbf{c}_j\in D_\mathcal{A}$, then \[
E_j\subseteq \bigcap_{k,\ell} C_{\{A_k, A_\ell\}} = C_\mathcal{A}.
\]

Conversely, we argue that $\mathbf{p} \in E_\mathcal{A} \setminus E^+_\mathcal{A}$ cannot lie in $C_\mathcal{A}$. This means $\mathbf{p} = (\mathbf{e}_{1j}, \dots, \mathbf{p}_j, \dots, \mathbf{e}_{mj})$ where $\mathbf{c}_j$ has negative depth in some camera $A_k \in \mathcal{A}$. From the definition of depth, this means \[
\mathbf{n}_k^\top \mathbf{c}_j = \det(G_k)\la_3p_{k3}c_{j4} < 0.
\]
If the camera centers are not collinear, we can choose a camera $A_\ell$ with $\ell\neq j$ such that $\mathbf{c}_j$, $\mathbf{c}_k$, and $\mathbf{c}_\ell$ do not lie on a line. By \Cref{lem:comp2}, 
\begin{align}
    &\sign \left(\det(G_k)p_{k3} (\mathbf{a}_k\times\mathbf{a}_\ell)^\top(\mathbf{b}_{k\ell}\times \mathbf{a}_\ell) \right)\\
    = &\sign(\det(G_k)p_{k3}\la_kc_{j4}) < 0, 
\end{align} 
violating one of the inequalities of $C_\mathcal{A}$. Hence, $\mathbf{p}\notin C_\mathcal{A}$.

% For any point in $E_j$, the cameras $A_k$ and $A_\ell$ see the image of $\mathbf{c}_j$.

If the camera centers are collinear, then the point of epipoles
$(\mathbf{e}_1,\mathbf{e}_2,\ldots,\mathbf{e}_m)$ is the image of the line connecting the centers. This point trivially lies in $C_\mathcal{A}$ because all 
defining inequalities evaluate to $0$ on this point. Again,  let $A_k$ be a camera such that $\mathbf{c}_j$ has negative depth in $A_k$. \Cref{lem:epipoleCa} shows that 
$(\mathbf{e}_{i},\mathbf{e}_k)$ is the only point in $E_i$
that lies in $C_{ \{A_i,A_k\}}$.
\qed\end{proof}

{
\renewcommand{\thelemma}{\ref{lem:eaplusplus}}
\addtocounter{lemma}{-1}
\begin{lemma}
Let $\mathcal{A} = \{A_1,A_2,\ldots,A_m\}$ be an arrangement of finite cameras with distinct centers. Let $E_\mathcal{A}^{++}$ be the union of the sets $E_j$ such that $\mathbf{c}_j$ has positive depth in every camera $A_i\in \mathcal{A}\setminus\{A_j\}$, then % We have 
%\[
$E_\mathcal{A}^{++}\subseteq \overline{\mathcal{X}}_\mathcal{A}$.
%\]
\end{lemma}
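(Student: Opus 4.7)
The plan is to reuse the auxiliary curve $\mathbf{v}(s) = \mathbf{c}_j + s(G_j^{-1}\mathbf{p}_j, 0)^\top$ that already appeared in the proof of \Cref{thm:euclidean-is-zariski}. For any $\mathbf{p} = (\mathbf{e}_{1j},\ldots,\mathbf{p}_j,\ldots,\mathbf{e}_{mj}) \in E_j \subseteq E_\mathcal{A}^{++}$, that calculation shows $\lim_{s\to 0}\varphi_\mathcal{A}(\mathbf{v}(s)) = \mathbf{p}$. Hence, to conclude $\mathbf{p}\in\overline{\mathcal{X}}_\mathcal{A}$, all that is needed is to verify that $\mathbf{v}(s)\in D_\mathcal{A}$ for a sequence of nonzero $s$ tending to $0$. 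The natural tool to certify membership in $D_\mathcal{A}$ is the inequality description of \Cref{thm:chiral set of an arrangement}, which applies because $\mathbf{c}_j$ already lies in $D_\mathcal{A}$ and small perturbations of it have strictly positive depth in every $A_i$, $i \neq j$, so $D_\mathcal{A}$ is nonempty.

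First I would record three depth computations. Since $\mathbf{v}(s)$ has last coordinate $1$, it is finite and $\mathbf{n}_\infty^\top \mathbf{v}(s) = 1 > 0$. For $i \neq j$ the principal ray of $A_i$ satisfies
\[
\mathbf{n}_i^\top \mathbf{v}(s) = \mathbf{n}_i^\top \mathbf{c}_j + s\,\mathbf{n}_i^\top (G_j^{-1}\mathbf{p}_j,0)^\top,
\]
and the hypothesis $E_j \subseteq E_\mathcal{A}^{++}$ says precisely that $\mathbf{n}_i^\top \mathbf{c}_j > 0$, so by continuity this quantity stays positive for all sufficiently small $|s|$. For the index $j$, using $A_j\mathbf{c}_j = 0$ and $A_j(G_j^{-1}\mathbf{p}_j,0)^\top = \mathbf{p}_j$, one computes
\[
\mathbf{n}_j^\top \mathbf{v}(s) = \det(G_j)\,s\,p_{j,3}.
\]

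To finish I would choose $s$ of the appropriate sign and apply \Cref{thm:chiral set of an arrangement}. If $p_{j,3}\neq 0$, take $s$ small of the same sign as $\det(G_j)p_{j,3}$, so that $\mathbf{n}_j^\top\mathbf{v}(s) > 0$ as well; then every product $(\mathbf{n}_k^\top\mathbf{v}(s))(\mathbf{n}_\ell^\top\mathbf{v}(s))$ with $k,\ell\in\{1,\ldots,m,\infty\}$ is strictly positive and $\mathbf{v}(s)$ even lies in the interior of $D_\mathcal{A}$. If $p_{j,3} = 0$, then $\mathbf{n}_j^\top\mathbf{v}(s)$ is identically zero: the products involving the index $j$ vanish, the remaining products are strictly positive, and the non-strict quadratic inequalities of \Cref{thm:chiral set of an arrangement} are satisfied for any small $s$. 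Either way $\mathbf{v}(s) \in D_\mathcal{A}$, so $\varphi_\mathcal{A}(\mathbf{v}(s)) \in \mathcal{X}_\mathcal{A}$, and letting $s\to 0$ along the selected signs yields $\mathbf{p}\in\overline{\mathcal{X}}_\mathcal{A}$. The only point that needs a little care is the case $p_{j,3} = 0$, where the perturbation $\mathbf{v}(s)$ sits on the principal plane $L_{A_j}$ and so has chirality $0$ rather than $+1$ in $A_j$; this is precisely where the use of the closed (non-strict) description in \Cref{thm:chiral set of an arrangement} is essential.
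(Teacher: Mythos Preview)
Your proposal is correct and follows the same approach as the paper: both use the curve $\mathbf{v}(s) = \mathbf{c}_j + s(G_j^{-1}\mathbf{p}_j,0)^\top$ from the proof of \Cref{thm:euclidean-is-zariski} and argue via continuity of depth that $\mathbf{v}(s)\in D_\mathcal{A}$ for small $s$ of the appropriate sign, treating separately the degenerate case where the depth in $A_j$ is identically zero. You make the boundary case $p_{j,3}=0$ explicit by invoking the non-strict inequality description in \Cref{thm:chiral set of an arrangement}, whereas the paper appeals directly to the closure definition of $D_\mathcal{A}$; otherwise the arguments coincide.
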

}
\begin{proof}
We can approach $\mathbf{p}=(\mathbf{p}_1,\mathbf{p}_2,\ldots,\mathbf{p}_m)\in E_j$ by the sequence of points $\varphi_\mathcal{A}(\mathbf{v}(s))$ as $s$ goes to $0$, where
$$\mathbf{v}(s) = 
\begin{pmatrix} 
sG_j^{-1}\mathbf{p}_j \\ 0 \end{pmatrix} + \mathbf{c}_j.$$ 
Indeed, $A_i \mathbf{v}(s) = s G_i G_j^{-1} \mathbf{p}_j +  \mathbf{e}_{ij}$ which approaches $\mathbf{e}_{ij}$ as $s \rightarrow 0$, and $A_j \mathbf{v}(s) =  s \mathbf{p}_j \sim \mathbf{p}_j$ for all $s \neq 0$.
Since depth is continuous and $\mathbf{c}_j$ has positive depth in $A_i$, $i \neq j$, 
the point $\mathbf{v}(s)$ has positive depth in $A_i$ for sufficiently small $s\in \R$. The depth of $\mathbf{v}(s)$ with respect to camera $A_j$ changes sign at $s=0$ (or it is identically $0$, that is $\mathbf{v}(s)$ lies on the principal plane of camera $A_j$ for all $s$). 
Therefore, $\mathbf{v}(s)$ is in $D_\mathcal{A}$ for sufficiently small positive or negative $s$.
So $\mathbf{p}$ lies in the closure of $\mathcal{X}_\mathcal{A}$.
\qed\end{proof}

\subsection*{\bf Proofs from~\Cref{sec:projectiveexistence}}
For a camera $A = \begin{bmatrix} G \,& \,{\mathbf t} \end{bmatrix}$, one can compute a
kernel element $\mathbf{c}$ via Cramer's rule so that $c_i$ is $(-1)^{i}$ times 
the determinant of the submatrix of $A$ obtained by dropping the $i$th column. In particular, $c_4 = \det(G)$. We call 
this representation of the center, the {\em Cramer's rule center} of $A$, and denote it as 
$\mathbf{c}$. Recall the representative
$$\mathbf{c}_A = \begin{bmatrix} -G^{-1} \mathbf{t} \\ 1 \end{bmatrix}$$
obtained by scaling of the Cramer's rule center by $\det(G)$.

{
\renewcommand{\thelemma}{\ref{lem:transformations}}
\addtocounter{lemma}{-1}
\begin{lemma}
Let $A = \begin{bmatrix} G \,& \,{\mathbf t} \end{bmatrix}$ be a finite camera with center ${\mathbf{c}}_A$. 
%= \begin{bmatrix} -G^{-1} \mathbf{t} \\ 1 \end{bmatrix}$. 
Let $H \in \GL_4$ with fourth row $\mathbf{h}^\top$ and $\delta = \det(H^{-1})$. Then 
\begin{enumerate}
    \item Under the homography $\mathbf{q} \mapsto H \mathbf{q}$, the plane $\mathbf{h}^\top \bf{q} = 0$ maps to the plane at infinity.
    \item The camera $AH^{-1}$ is finite if and only if $\mathbf{h}^\top \mathbf{c}_A\neq 0$. Its center then is ${\mathbf c}_{AH^{-1}} = \frac{1}{{\mathbf h}^\top {{\mathbf c}}_A} H {{\mathbf c}}_A$.
    \item The principal ray of $AH^{-1}$ is $$
    \mathbf{n}_{AH^{-1}} = \delta ( \mathbf{h}^\top {\mathbf{c}}_A )  H^{-\top}\mathbf{n}_A, $$
    and so for all $\mathbf{q} \in \R^4$, we have 
    $$
    \mathbf{n}_{AH^{-1}}^\top(H\mathbf{q}) = \delta (\mathbf{h}^\top \mathbf{c}_A ) (\mathbf{n}_{A}^\top \mathbf{q}).
    $$
\end{enumerate}
\end{lemma}
}

\begin{proof} 
\begin{enumerate}
    \item A point $H\mathbf{q}$ lies on the plane at infinity if and only if $(H\mathbf{q})_4 = \mathbf{h}^\top \mathbf{q} = 0$.
    \item The first equivalence in the claim follows from the previous part. Let $\mathbf{c}$ be the Cramer's rule center of $A$. Then, ${\mathbf c}_A = \frac{1}{\det(G)} {\mathbf c}$. Observe that $H {\mathbf c}$ is a representative for the center of $AH^{-1}$ with $(H\mathbf{c})_4 = \bf{h}^\top \bf{c}$. We compute
    \[
    {\mathbf c}_{AH^{-1}} = \frac{1}{{\mathbf h}^\top {{\mathbf c}}} H{{\mathbf c}} = \frac{\det(G)}{\det(G){\mathbf h}^\top {{\mathbf c}_A}} H{{\mathbf c}_A} ,
    \]  
    from which the result follows.
    \item  The determinant of the first $3 \times 3$ block of $AH^{-1}$ is the last coordinate of the Cramer's rule center of $AH^{-1}$. Hartley \cite{hartley1998chirality} shows that the Cramer's rule center of $AH^{-1}$ is $\delta H {\mathbf c} = \delta \det(G) H {\mathbf c}_A$. The principal ray of $AH^{-1}$ is therefore,
    \begin{align}
        \mathbf{n}_{AH^{-1}} &= \left(\delta H \mathbf{c}\right)_4  (A_{3\bullet} H^{-1})^\top \\
                        &= \delta \det(G) ({\mathbf h}^\top {\mathbf c}_A) H^{-\top} A_{3\bullet}^\top \\
                        &= \delta (\mathbf{h}^\top {\mathbf{c}}_A ) H^{-\top} \mathbf{n}_A
    \end{align}
 Plugging in the expression for the principal ray, we compute
\begin{align}
\label{eq:Haffectingproduct}
 (\mathbf{n}_{AH^{-1}}^\top(H\mathbf{q})) &=  \delta (\mathbf{h}^\top {\mathbf{c}}_A ) (\mathbf{n}_{A}^\top H^{-1} H \mathbf{q}) \\
 &= \delta (\mathbf{h}^\top {\mathbf{c}}_A ) (\mathbf{n}_{A}^\top \mathbf{q}) 
\end{align}
for all $\mathbf{q}\in \R^4$.
\end{enumerate}
\end{proof}

\subsection*{\bf Proofs from~\Cref{sec:euclideanexistence}}

Applying techniques from \Cref{sec:projectiveexistence}, we show when a Euclidean reconstruction can be made chiral using a homography. As we argue in \Cref{sec:euclideanexistence}, we may assume that our starting and target reconstructions have $A_1 = \begin{bmatrix} I & \mathbf{0} \end{bmatrix}$. This choice of the first cameras restricts the homographies we need to consider to 
$H$ such that $H^{-1} = \begin{bmatrix} I & 0 \\ \mathbf{v}^\top & \de \end{bmatrix}$ for some $\mathbf{v} \in \R^3$ and nonzero $\de \in \R$. Note that $\delta = \det{H}^{-1}$.

We now introduce the notion of a quasi-Euclidean camera.
\begin{definition}
A camera $A = \begin{bmatrix} U & \mathbf{t} \end{bmatrix} $ is \emph{quasi-Euclidean} if  $UU^\top = I$.
\end{definition}

While we are interested in transforming a Euclidean reconstruction into a chiral Euclidean reconstruction, a homography may only be able to yield a reconstruction where the transformed cameras are quasi-Euclidean. However, since scaling a camera does not change 
chirality, a chiral quasi-Euclidean reconstruction can be turned into a chiral Euclidean reconstruction by multiplying $A_i$ by $\sign(\det(U_i))$. 
As a result, we only need to search for a homography $H$ that sends our starting Euclidean reconstruction to one where every camera is quasi-Euclidean, which bring us to the following lemma.

\begin{lemma}
\label{lem:Euclidean camera to quasi}
Given a Euclidean camera $A = \begin{bmatrix} R &\mathbf{t}\end{bmatrix} $ such that $\mathbf{t} \neq \mathbf{0}$ and a homography $H$ such that $H^{-1} = \begin{bmatrix} I & 0 \\ \mathbf{v}^\top & \de \end{bmatrix}$ for some vector $\mathbf{v} \in \R^3$ and $\de \neq 0$, the camera $AH^{-1}$ is quasi-Euclidean if and only if $\mathbf{v} = \mathbf{0}$ or $\mathbf{v} = -\frac{2}{\|\mathbf{t}\|^2} R^\top \mathbf{t}$. 
\end{lemma}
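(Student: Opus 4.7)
The plan is to directly compute $AH^{-1}$ and reduce the quasi-Euclidean condition $UU^\top = I$ to a rank-based matrix equation, then solve.

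First I would compute
$$AH^{-1} = \begin{bmatrix} R & \mathbf{t}\end{bmatrix}\begin{bmatrix} I & 0 \\ \mathbf{v}^\top & \delta \end{bmatrix} = \begin{bmatrix} R + \mathbf{t}\mathbf{v}^\top & \delta \mathbf{t}\end{bmatrix},$$
so setting $U = R + \mathbf{t}\mathbf{v}^\top$ and using $RR^\top = I$,
$$UU^\top = I + R\mathbf{v}\mathbf{t}^\top + \mathbf{t}\mathbf{v}^\top R^\top + \|\mathbf{v}\|^2\, \mathbf{t}\mathbf{t}^\top.$$
Hence $AH^{-1}$ is quasi-Euclidean if and only if
$$R\mathbf{v}\mathbf{t}^\top + \mathbf{t}(R\mathbf{v})^\top + \|\mathbf{v}\|^2\, \mathbf{t}\mathbf{t}^\top = 0. \qquad (\ast)$$

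Next I would introduce $\mathbf{w} := R\mathbf{v}$ (note $\|\mathbf{w}\| = \|\mathbf{v}\|$ because $R$ is orthogonal) and rewrite $(\ast)$ as
$$\mathbf{w}\mathbf{t}^\top + \mathbf{t}\mathbf{w}^\top = -\|\mathbf{w}\|^2\, \mathbf{t}\mathbf{t}^\top.$$
The right-hand side has rank at most $1$ (and is $0$ iff $\mathbf{w}=0$, since $\mathbf{t}\neq\mathbf{0}$). The left-hand side is a symmetric matrix whose image lies in $\operatorname{span}(\mathbf{w}, \mathbf{t})$; if $\mathbf{w}$ and $\mathbf{t}$ are linearly independent, a short computation (e.g.~evaluating on any vector in $\mathbf{w}^\perp \cap \mathbf{t}^\perp$'s complement) shows it has rank exactly $2$. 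Comparing ranks forces either $\mathbf{w}=\mathbf{0}$ (so $\mathbf{v} = \mathbf{0}$) or $\mathbf{w} = \alpha \mathbf{t}$ for some nonzero scalar $\alpha$.

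In the second case I would substitute $\mathbf{w} = \alpha\mathbf{t}$ into $(\ast)$, giving
$$2\alpha\, \mathbf{t}\mathbf{t}^\top = -\alpha^2 \|\mathbf{t}\|^2\, \mathbf{t}\mathbf{t}^\top,$$
and since $\alpha\neq 0$ and $\mathbf{t}\mathbf{t}^\top\neq 0$ I can solve for $\alpha = -\tfrac{2}{\|\mathbf{t}\|^2}$. Applying $R^\top$ to $\mathbf{w} = \alpha \mathbf{t}$ yields $\mathbf{v} = -\tfrac{2}{\|\mathbf{t}\|^2} R^\top \mathbf{t}$. The converse (both candidate $\mathbf{v}$'s do satisfy $(\ast)$) is a direct plug-in. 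The only subtle point is the rank argument pinning $\mathbf{w}$ to the line through $\mathbf{t}$; everything else is linear algebra.
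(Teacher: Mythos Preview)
Your proof is correct and follows essentially the same approach as the paper: expand the orthogonality condition on $U = R + \mathbf{t}\mathbf{v}^\top$, deduce that the unknown vector must be parallel to a fixed one, then solve for the scalar. The paper expands $U^\top U$ and evaluates the resulting matrix on $\mathbf{v}$ to force $\mathbf{v}\parallel R^\top\mathbf{t}$, while you expand $UU^\top$ and use a rank comparison to force $R\mathbf{v}\parallel \mathbf{t}$; these are interchangeable executions of the same idea.
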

\begin{proof}
The requirement that  $AH^{-1}$ be
quasi-Euclidean translates to 
%$(R + \mathbf{t}\mathbf{v}^\top)^\top(R + \mathbf{t}\mathbf{v}^\top) = I$. We compute
 \begin{align*}
    I &= (R + \mathbf{t}\mathbf{v}^\top)^\top(R + \mathbf{t}\mathbf{v}^\top) \\
    &= R^\top R + \mathbf{v}\mathbf{t}^\top R + R^\top\mathbf{t}\mathbf{v}^\top + \mathbf{v}\mathbf{t}^\top \mathbf{t} \mathbf{v}^\top \\
        &= I + \mathbf{v}\mathbf{t}^\top R + R^\top\mathbf{t}\mathbf{v}^\top +  \|\mathbf{t}\|^2 \mathbf{v}\mathbf{v}^\top
 \end{align*}
 For the fixed vector $\tilde{\mathbf{c}} : = -R^\top\mathbf{t} \neq \mathbf{0}$, this system is equivalent to finding $\mathbf{v}$ such that $M := -\mathbf{v}\tilde{\mathbf{c}}^\top - \tilde{\mathbf{c}}\mathbf{v}^\top + (\mathbf{v}\tilde{\mathbf{c}}^\top )(\tilde{\mathbf{c}}\mathbf{v}^\top) = 0$. Certainly $\mathbf{v} = \mathbf{0}$ is one solution. Otherwise, applying $M$ to $\mathbf{v}$, we get that 
\begin{align}
{\mathbf 0} = M\mathbf{v} &= -(\tilde{\mathbf{c}}^\top\mathbf{v}) \mathbf{v}  -(\mathbf{v}^\top\mathbf{v}) \tilde{\mathbf{c}} + (\tilde{\mathbf{c}}^\top\tilde{\mathbf{c}})(\mathbf{v}^\top\mathbf{v}) \mathbf{v} \\
        &= ((\tilde{\mathbf{c}}^\top\tilde{\mathbf{c}})(\mathbf{v}^\top\mathbf{v}) -(\tilde{\mathbf{c}}^\top\mathbf{v})) \mathbf{v} - (\mathbf{v}^\top\mathbf{v}) \tilde{\mathbf{c}}.\label{eq:v is multiple of c}
\end{align}
If $(\tilde{\mathbf{c}}^\top\tilde{\mathbf{c}})(\mathbf{v}^\top\mathbf{v}) -(\tilde{\mathbf{c}}^\top\mathbf{v}) = \mathbf{0}$ for some $\mathbf{v} \neq 0$, then $M\mathbf{v} = (\mathbf{v}^\top \mathbf{v}) \tilde{\mathbf{c}} \neq \mathbf{0}$. Therefore, \Cref{eq:v is multiple of c} implies that 
$\mathbf{v} = \lambda \tilde{\mathbf{c}}$ for some $\lambda \neq 0$. Solving for $\lambda$, we get $\lambda = \frac{2}{\tilde{\mathbf{c}}^\top\tilde{\mathbf{c}}}$
% \begin{align}
% \left(\lambda^2(\tilde{\mathbf{c}}^\top\tilde{\mathbf{c}})^2 - \lambda (\tilde{\mathbf{c}}^\top\tilde{\mathbf{c}})\right)\lambda \tilde{\mathbf{c}} - \lambda^2(\tilde{\mathbf{c}}^\top\tilde{\mathbf{c}})\tilde{\mathbf{c}} & = 0\\
% \left(\lambda(\tilde{\mathbf{c}}^\top\tilde{\mathbf{c}}) - 1\right) \tilde{\mathbf{c}} - \tilde{\mathbf{c}} & = 0\\
% \lambda &= \frac{2}{\tilde{\mathbf{c}}^\top\tilde{\mathbf{c}}}.
% \end{align}
Which gives us the only additional solution $\mathbf{v} = \frac{2}{\|\tilde{\mathbf{c}}\|^2}  \tilde{\mathbf{c}} = -\frac{2}{\|\mathbf{t}\|^2} R^\top \mathbf{t}$. 
\qed\end{proof}

% We are interested in homographies from \Cref{lem:Euclidean camera to quasi} that change chirality. \textcolor{red}{delete/change? We may factor out those of the form $H = \diag(1,1,1,\la)$ for some positive scalar $\lambda$, which do not affect chirality.
Without loss of generality, we may assume the homographies in \Cref{lem:Euclidean camera to quasi} have $|\de| = 1$, leaving us with the following four possibilities for two view Euclidean reconstructions:

\begin{align}
H_1^{-1}  := \begin{bmatrix} I & \mathbf{0} \\ \mathbf{0}^\top & 1 \end{bmatrix}, \
H_2^{-1}  := \begin{bmatrix} I & \mathbf{0} \\ \mathbf{0}^\top & -1 \end{bmatrix}, \\
H_3^{-1} := \begin{bmatrix} I & \mathbf{0} \\ \mathbf{v}^\top & 1 \end{bmatrix}, \
H_4^{-1} := \begin{bmatrix} I & \mathbf{0} \\ \mathbf{v}^\top & - 1 \end{bmatrix} 
\end{align}
where $\mathbf{v} = -\frac{2}{\|\mathbf{t}\|^2} R^\top \mathbf{t}$. These have the following inverses.
\begin{align}
H_1  = \begin{bmatrix} I & \mathbf{0} \\ \mathbf{0}^\top & 1 \end{bmatrix}, \quad H_2  = \begin{bmatrix} I & \mathbf{0} \\ \mathbf{0}^\top & -1 \end{bmatrix}, \\  
H_3 = \begin{bmatrix} I & \mathbf{0} \\ -\mathbf{v}^\top & 1 \end{bmatrix}, \quad H_4 = \begin{bmatrix} I & \mathbf{0} \\ \mathbf{v}^\top & - 1 \end{bmatrix} 
\end{align}
A Euclidean reconstruction $(\{A_1 = \begin{bmatrix} I & \mathbf{0}\end{bmatrix} ,A_2 = \begin{bmatrix} R &\mathbf{t}\end{bmatrix} \}, \mathcal{Q})$ can be made chiral if and only if one of $(\mathcal{A}H_i^{-1}, H_i \mathcal{Q})$ is chiral. Just as in the projective case, we assume we start with a signed reconstruction. Let $\mathbf{h}_i$ be the last row of $H_i$. From \Cref{thm:cone conditions for chiral recon}, we know we need only check if one of $\mathbf{h}_i$ lies in the cone intersection $K_\mathcal{Q}^\ast \cap (\interior K_{\si C}^\ast \cup \interior K_{-\si C}^\ast)$. As the following lemma shows, the special structure of $\mathbf{h}_i$ causes the cone conditions to simplify.

\begin{lemma}
\label{lem:hi in center cones}
Let $(\{A_1 = \begin{bmatrix} I & \mathbf{0}\end{bmatrix} ,A_2 = \begin{bmatrix} R &\mathbf{t}\end{bmatrix} \}, \mathcal{Q})$ be a signed Euclidean reconstruction of $\mathcal{P}$ such that $\mathbf{t} \neq \mathbf{0}$. 
\begin{enumerate}
    \item If $\si_1 = \si_2$, then $\mathbf{h}_1, \mathbf{h}_2 \in \interior K_{\si C}^\ast \cup \interior K_{-\si C}^\ast$ and $\mathbf{h}_3, \mathbf{h}_4 \notin \interior K_{\si C}^\ast \cup \interior K_{-\si C}^\ast$.
    \item If $\si_1 \neq \si_2$ then $\mathbf{h}_3, \mathbf{h}_4 \in \interior K_{\si C}^\ast \cup \interior K_{-\si C}^\ast$ and $\mathbf{h}_1, \mathbf{h}_2 \notin \interior K_{\si C}^\ast \cup \interior K_{-\si C}^\ast$.
\end{enumerate}
\end{lemma}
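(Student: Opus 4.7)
The plan is to reduce the question to computing the inner products $\mathbf{h}_i^\top \mathbf{c}_j$ for $i \in \{1,2,3,4\}$ and $j \in \{1,2\}$, and then reading off whether the pair $(\sigma_1\mathbf{h}_i^\top\mathbf{c}_1,\ \sigma_2\mathbf{h}_i^\top\mathbf{c}_2)$ consists of two nonzero entries of the same sign.

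First I would set up the characterization. Because the centers are distinct and $\mathbf{t}\neq \mathbf{0}$, the vector $\mathbf{c}_1 = (0,0,0,1)^\top$ is not a scalar multiple of $\mathbf{c}_2 = (-R^\top \mathbf{t}, 1)^\top$, so $K_{\sigma C} = \cone(\sigma_1 \mathbf{c}_1, \sigma_2 \mathbf{c}_2)$ is a $2$-dimensional pointed cone and $K_{\sigma C}^\ast$ is full-dimensional. By the description of $\interior K^\ast$ recalled in \Cref{sec:background},
$$
\interior K_{\sigma C}^\ast \;=\; \{\mathbf{y}\in\R^4 \;:\; \sigma_1\,\mathbf{y}^\top \mathbf{c}_1 > 0 \text{ and } \sigma_2\,\mathbf{y}^\top \mathbf{c}_2 > 0\},
$$
and $\interior K_{-\sigma C}^\ast$ is obtained by flipping both inequalities. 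Hence $\mathbf{h} \in \interior K_{\sigma C}^\ast \cup \interior K_{-\sigma C}^\ast$ if and only if $\sigma_1\mathbf{h}^\top \mathbf{c}_1$ and $\sigma_2\mathbf{h}^\top \mathbf{c}_2$ are both nonzero and share a common sign.

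Next I would write the four last rows explicitly. From $H_1$ and $H_2$, $\mathbf{h}_1 = (0,0,0,1)^\top$ and $\mathbf{h}_2 = -\mathbf{h}_1$. From $H_3$ and $H_4$ together with $\mathbf{v} = -\tfrac{2}{\|\mathbf{t}\|^2}R^\top \mathbf{t}$, I read off $\mathbf{h}_3 = (-\mathbf{v}^\top, 1)^\top$ and $\mathbf{h}_4 = (\mathbf{v}^\top,-1)^\top = -\mathbf{h}_3$. The only computation requiring the Euclidean hypothesis is $\mathbf{h}_3^\top \mathbf{c}_2$: using $R\in\SO(3)$, $\|R^\top \mathbf{t}\|^2 = \|\mathbf{t}\|^2$, whence
$$
\mathbf{h}_3^\top \mathbf{c}_2 \;=\; -\mathbf{v}^\top(-R^\top \mathbf{t}) + 1 \;=\; -\tfrac{2}{\|\mathbf{t}\|^2}\|R^\top \mathbf{t}\|^2 + 1 \;=\; -1.
$$
All remaining inner products are trivial: $\mathbf{h}_1^\top \mathbf{c}_1 = \mathbf{h}_1^\top \mathbf{c}_2 = \mathbf{h}_3^\top \mathbf{c}_1 = 1$, with the $\mathbf{h}_2, \mathbf{h}_4$ values obtained by negation.

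Plugging these into the sign criterion finishes the proof. For $\mathbf{h}_1$ (and therefore $\mathbf{h}_2 = -\mathbf{h}_1$), the pair $(\sigma_1, \sigma_2)$ has matching signs iff $\sigma_1 = \sigma_2$, giving case (1). For $\mathbf{h}_3$ (and therefore $\mathbf{h}_4 = -\mathbf{h}_3$), the pair $(\sigma_1, -\sigma_2)$ has matching signs iff $\sigma_1 \neq \sigma_2$, giving case (2). There is really no obstacle to this proof — everything reduces to a short direct calculation — and the only substantive ingredient is the orthogonality identity $\|R^\top \mathbf{t}\| = \|\mathbf{t}\|$ that is responsible for the single nontrivial entry $\mathbf{h}_3^\top \mathbf{c}_2 = -1$.
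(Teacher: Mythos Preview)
Your proof is correct and follows essentially the same approach as the paper: compute the inner products $\mathbf{h}_i^\top(\sigma_j\mathbf{c}_j)$ directly and read off which $\mathbf{h}_i$ land in $\interior K_{\sigma C}^\ast \cup \interior K_{-\sigma C}^\ast$ via the sign criterion. You are slightly more explicit than the paper in justifying the characterization of $\interior K_{\sigma C}^\ast$ and in invoking $\|R^\top\mathbf{t}\| = \|\mathbf{t}\|$, but the argument is the same.
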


\begin{proof}
We first compute $\mathbf{h}_i^\top \si_j \mathbf{c}_j$ for all $i,j$:
\begin{align}
    \mathbf{h}_1^\top \si_1 \mathbf{c}_1 &= \si_1,\; \mathbf{h}_1^\top \si_2 \mathbf{c}_2 = \si_2\\
    \mathbf{h}_2^\top \si_1 \mathbf{c}_1 &= -\si_1, \; \mathbf{h}_2^\top \si_2 \mathbf{c}_2 = -\si_2\\
    \mathbf{h}_3^\top \si_1 \mathbf{c}_1 &= \si_1, \; \mathbf{h}_3^\top \si_2 \mathbf{c}_2 = (-\mathbf{v}^\top (-R^\top \mathbf{t}) + 1) \si_2 = -\si_2  \\
    \mathbf{h}_4^\top \si_1 \mathbf{c}_1 &= -\si_1, \; 
    \mathbf{h}_4^\top \si_2 \mathbf{c}_2 = (\mathbf{v}^\top (-R^\top \mathbf{t}) - 1) \si_2 = \si_2 
\end{align}
The vectors $\mathbf{h}_1$ and $\mathbf{h}_2$ make the same sign inner product with $\si_1 \mathbf{c}_1$ and $\si_2 \mathbf{c}_2$ if and only if $\si_1 = \si_2$. Similarly the vectors $\mathbf{h}_3$ and $\mathbf{h}_4$ make the same sign inner product with $\si_1 \mathbf{c}_1$ and $\si_2 \mathbf{c}_2$ if and only if $\si_1 = - \si_2$.
\qed\end{proof}

{
\renewcommand{\thetheorem}{\ref{thm:calibrated-2-views}}
\addtocounter{theorem}{-1}
\begin{theorem}
Let $(\{A_1 = \begin{bmatrix} I & \mathbf{0}\end{bmatrix} ,A_2 = \begin{bmatrix} R &\mathbf{t}\end{bmatrix} \}, \mathcal{Q})$ be a signed Euclidean reconstruction of $\mathcal{P}$ with distinct centers. There exists a 
chiral Euclidean reconstruction of $\mathcal{P}$ if and only if $\mathbf{n}_\infty \in K_\mathcal{Q}^\ast \cup K_{-\mathcal{Q}}^\ast$ or $\mathbf{r} := \begin{bmatrix} -\frac{2}{\|\mathbf{t}\|^2} R^\top \mathbf{t} \\1 \end{bmatrix} \in K_\mathcal{Q}^\ast \cup K_{-\mathcal{Q}}^\ast$. Equivalently, if exactly one of the following holds for all $\mathbf{q}_i$:
\[
q_{i4} \ge 0 \;\text{ or } \; q_{i4} \le 0 \; \text{ or } \; \mathbf{r}^\top \mathbf{q}_i \ge 0  \; \text{ or } \; \mathbf{r}^\top \mathbf{q}_i \le 0.
\]
\end{theorem}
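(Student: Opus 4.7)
The plan is to specialize \Cref{thm:cone conditions for chiral recon} to the restricted class of homographies that preserve the Euclidean structure. Because $A_1 = \begin{bmatrix} I & \mathbf{0} \end{bmatrix}$ is fixed (up to scaling) as a Euclidean camera, any admissible $H$ must satisfy $H^{-1} = \begin{bmatrix} I & \mathbf{0} \\ \mathbf{v}^\top & \delta \end{bmatrix}$; since chirality is unaffected by scaling we may take $\delta \in \{\pm 1\}$. Requiring $A_2 H^{-1}$ to be (quasi-)Euclidean forces, by \Cref{lem:Euclidean camera to quasi}, the choice $\mathbf{v} \in \bigl\{\mathbf{0},\, -\tfrac{2}{\|\mathbf{t}\|^2} R^\top \mathbf{t}\bigr\}$. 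Multiplying these two binary choices yields exactly the four homographies $H_1,H_2,H_3,H_4$ listed prior to the theorem, whose last rows we denote $\mathbf{h}_1,\mathbf{h}_2,\mathbf{h}_3,\mathbf{h}_4$. Note that the hypothesis of distinct centers guarantees $\mathbf{t} \neq \mathbf{0}$, so the formula for $\mathbf{v}$ is well-defined.

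I would then invoke \Cref{thm:cone conditions for chiral recon}: a chiral reconstruction exists via some such $H$ iff its last row $\mathbf{h}$ lies in $K_\mathcal{Q}^* \cap \bigl( \interior K_{\sigma C}^* \cup \interior K_{-\sigma C}^* \bigr)$. Since the candidate $H$ is restricted to $\{H_1,H_2,H_3,H_4\}$, it suffices that one of $\mathbf{h}_1,\ldots,\mathbf{h}_4$ satisfy both conditions. The key lever here is \Cref{lem:hi in center cones}, which shows that whether $\mathbf{h}_i$ passes the center-cone test is entirely determined by the sign pattern of the signed reconstruction: only $\mathbf{h}_1$ and $\mathbf{h}_2$ lie in $\interior K_{\sigma C}^* \cup \interior K_{-\sigma C}^*$ when $\sigma_1 = \sigma_2$, and only $\mathbf{h}_3,\mathbf{h}_4$ do so when $\sigma_1 \neq \sigma_2$. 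Thus the problem collapses to asking whether the single surviving pair of candidate vectors meets $K_\mathcal{Q}^*$.

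Finally, I would translate the remaining dual-cone condition into the concrete inequalities in the statement. Observing that $\mathbf{h}_2 = -\mathbf{h}_1 = -\mathbf{n}_\infty$, the condition ``$\mathbf{h}_1 \in K_\mathcal{Q}^*$ or $\mathbf{h}_2 \in K_\mathcal{Q}^*$'' is exactly ``$\mathbf{n}_\infty \in K_\mathcal{Q}^* \cup K_{-\mathcal{Q}}^*$'', which unwinds via the definition of the dual cone to ``$q_{i4} \geq 0$ for all $i$ or $q_{i4} \leq 0$ for all $i$''. In parallel, by construction $\mathbf{h}_3$ and $\mathbf{h}_4$ are (up to sign) the vector $\mathbf{r}$ of the theorem, so the analogous rewriting of ``$\mathbf{h}_3 \in K_\mathcal{Q}^*$ or $\mathbf{h}_4 \in K_\mathcal{Q}^*$'' gives ``$\mathbf{r}^\top \mathbf{q}_i \geq 0$ for all $i$ or $\mathbf{r}^\top \mathbf{q}_i \leq 0$ for all $i$''. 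Because a signed reconstruction falls into exactly one of the two sign classes, the disjunction of these two alternatives is the desired biconditional.

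The main obstacle is the careful bookkeeping relating the four admissible homographies to the two sign classes, together with the sign conventions for $\mathbf{h}_3,\mathbf{h}_4$ versus $\mathbf{r}$; one must confirm that the ``or'' in the theorem is correctly matched to the $\sigma_1 = \sigma_2$ versus $\sigma_1 \neq \sigma_2$ dichotomy provided by \Cref{lem:hi in center cones}, so that the conditions on $\mathbf{n}_\infty$ and $\mathbf{r}$ are not being conflated across sign classes but rather used each in its own regime.
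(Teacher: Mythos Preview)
Your proposal is correct and follows essentially the same approach as the paper: restrict to the four homographies $H_1,\ldots,H_4$ via \Cref{lem:Euclidean camera to quasi}, apply \Cref{thm:cone conditions for chiral recon}, use \Cref{lem:hi in center cones} to reduce to the pair $\{\mathbf{h}_1,\mathbf{h}_2\}$ or $\{\mathbf{h}_3,\mathbf{h}_4\}$ depending on whether $\sigma_1=\sigma_2$, and then read off the $K_\mathcal{Q}^\ast$ condition as the stated sign inequalities. Your outline is in fact more detailed than the paper's short proof, but the logical skeleton and the lemmas invoked are identical.
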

}

\begin{proof}
By \Cref{thm:cone conditions for chiral recon}, a chiral Euclidean reconstruction exists if and only if one of the $\mathbf{h}_i$ lies in the cone intersection $K_\mathcal{Q}^\ast \cap (\interior K_{\si C}^\ast \cup \interior K_{-\si C}^\ast)$. By \Cref{lem:hi in center cones}, if $\si_1 = \si_2$, it is necessary and sufficient that either $\mathbf{h}_1 = \mathbf{n}_\infty \in K_\mathcal{Q}^\ast$ or $\mathbf{h}_2 = -\mathbf{n}_\infty \in K_\mathcal{Q}^\ast$. On the other hand, if $\si_1 \neq \si_2$, it is necessary and sufficient that either $\mathbf{h}_3 = \begin{bmatrix} -\frac{2}{\|\mathbf{t}\|^2} R^\top \mathbf{t} \\1 \end{bmatrix}  \in K_\mathcal{Q}^\ast$ or $\mathbf{h}_4 = -\begin{bmatrix} -\frac{2}{\|\mathbf{t}\|^2} R^\top \mathbf{t} \\1 \end{bmatrix}  \in K_\mathcal{Q}^\ast$, proving the statement.
\qed\end{proof}

{
\renewcommand{\thetheorem}{\ref{thm:calibrated-m-views}}
\addtocounter{theorem}{-1}
\begin{theorem}
Let $(\mathcal{A}, \mathcal{Q})$ be a signed Euclidean reconstruction of $\mathcal{P}$ with $m>2$ cameras, distinct centers, and $A_1 = \begin{bmatrix} I& \mathbf{0}\end{bmatrix}$. There exists a chiral Euclidean reconstruction of $\mathcal{P}$ if and only if $\si_i = \si_j$ for all $1\le i<j \le m$ and either $q_{i4} \ge 0$ for all $i$ or $q_{i4} \le 0$ for all $i$.
\end{theorem}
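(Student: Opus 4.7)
My plan is to reduce the problem to \Cref{thm:cone conditions for chiral recon} by first showing that for $m>2$ cameras with distinct centers and $A_1 = \begin{bmatrix} I & \mathbf{0}\end{bmatrix}$, the set of homographies $H$ yielding another (quasi-)Euclidean reconstruction is extremely restricted. As explained in the lead-up to \Cref{lem:Euclidean camera to quasi}, preserving the form of $A_1$ forces $H^{-1} = \begin{bmatrix} I & 0 \\ \mathbf{v}^\top & \delta \end{bmatrix}$ for some $\mathbf{v}\in\R^3$ and, after scaling, $|\delta|=1$. For every $i\ge 2$, distinct centers imply $\mathbf{t}_i \neq \mathbf{0}$, so \Cref{lem:Euclidean camera to quasi} says $A_i H^{-1}$ is quasi-Euclidean if and only if $\mathbf{v}=\mathbf{0}$ or $\mathbf{v} = -\frac{2}{\|\mathbf{t}_i\|^2}R_i^\top \mathbf{t}_i = \frac{2}{\|\mathbf{t}_i\|^2}\widetilde{\mathbf{c}}_i$.

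The crucial observation is that the non-trivial option cannot be satisfied by two different cameras simultaneously. If $\mathbf{v}\neq \mathbf{0}$ and $\mathbf{v} = \frac{2}{\|\mathbf{t}_i\|^2}\widetilde{\mathbf{c}}_i$, then using $\|\widetilde{\mathbf{c}}_i\| = \|R_i^\top \mathbf{t}_i\| = \|\mathbf{t}_i\|$, we get $\|\mathbf{v}\| = 2/\|\mathbf{t}_i\|$ and hence $\widetilde{\mathbf{c}}_i = \frac{2}{\|\mathbf{v}\|^2}\mathbf{v}$, a vector that depends only on $\mathbf{v}$. Since $m>2$, there are at least two cameras $A_i, A_j$ with $i,j\ge 2$, and their centers are distinct, so at least one of them must fall under the trivial alternative $\mathbf{v}=\mathbf{0}$, forcing $\mathbf{v}=\mathbf{0}$ for all of them. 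This leaves only the two homographies with $H^{-1} = \diag(1,1,1,1)$ or $H^{-1}=\diag(1,1,1,-1)$, whose last rows are $\mathbf{h} = \mathbf{n}_\infty$ or $\mathbf{h} = -\mathbf{n}_\infty$ respectively.

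Given this reduction, \Cref{thm:cone conditions for chiral recon} says a chiral Euclidean reconstruction exists iff at least one of $\pm \mathbf{n}_\infty$ lies in $K_\mathcal{Q}^\ast \cap (\interior K_{\si C}^\ast \cup \interior K_{-\si C}^\ast)$. The membership conditions are then immediate: since $\mathbf{n}_\infty^\top \mathbf{q}_k = q_{k4}$, we have $\mathbf{n}_\infty \in K_\mathcal{Q}^\ast$ iff $q_{k4}\ge 0$ for all $k$, and $-\mathbf{n}_\infty \in K_\mathcal{Q}^\ast$ iff $q_{k4}\le 0$ for all $k$. Since our centers all satisfy $c_{i4}=1$, we get $\mathbf{n}_\infty^\top (\si_i \mathbf{c}_i) = \si_i$, so $\mathbf{n}_\infty \in \interior K_{\si C}^\ast$ iff every $\si_i=+1$, $\mathbf{n}_\infty \in \interior K_{-\si C}^\ast$ iff every $\si_i=-1$, and the analogous statements hold for $-\mathbf{n}_\infty$. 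Combining these gives exactly the condition in the theorem: all signs $\si_i$ are equal, and all $q_{k4}$ have a common (weak) sign.

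The main obstacle is the first step: carefully ruling out $\mathbf{v}\neq \mathbf{0}$. The clean way is the uniqueness argument via $\|\widetilde{\mathbf{c}}_i\|=\|\mathbf{t}_i\|$ sketched above, which crucially uses $m>2$ together with the distinct-centers hypothesis. Once this is pinned down, the remainder of the proof is mechanical: one reads off $\mathbf{n}_\infty$ membership in the relevant cones and invokes \Cref{thm:cone conditions for chiral recon}.
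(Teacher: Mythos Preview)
Your proof is correct and follows essentially the same route as the paper: restrict to homographies of the form $H^{-1}=\begin{bmatrix} I & 0 \\ \mathbf{v}^\top & \delta\end{bmatrix}$, use \Cref{lem:Euclidean camera to quasi} for each $A_i$ with $i\ge 2$ to force $\mathbf{v}=\mathbf{0}$ when $m>2$, and then read off the conditions for $\pm\mathbf{n}_\infty$ to lie in $K_\mathcal{Q}^\ast \cap (\interior K_{\si C}^\ast \cup \interior K_{-\si C}^\ast)$ via \Cref{thm:cone conditions for chiral recon}. Your norm computation showing $\widetilde{\mathbf{c}}_i = \tfrac{2}{\|\mathbf{v}\|^2}\mathbf{v}$ is a clean way to justify why the nontrivial option cannot hold for two cameras with distinct centers; the paper asserts this step more tersely (``the vectors $-\tfrac{2}{\|\mathbf{t}_i\|^2}R_i^\top\mathbf{t}_i$ will not coincide''), and your argument makes explicit why parallel centers in fact coincide.
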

}

\begin{proof}
Since the cameras have distinct centers, the vectors $-\frac{2}{\|\mathbf{t}_i\|^2} R_i^\top \mathbf{t}_i $ will not coincide, so by \Cref{lem:Euclidean camera to quasi}, the only homographies we can consider are $H_1$ and $H_2$. As in \Cref{lem:hi in center cones}, $\mathbf{h}_1 = \mathbf{n}_\infty, \mathbf{h}_2 = -\mathbf{n}_\infty \in \interior K_{\si C}^\ast \cup \interior K_{-\si C}^\ast$ if and only if $\si_i = \si_j$ for all $i,j$. When this is the case, a chiral reconstruction exists if and only if $\mathbf{n}_\infty \in K_\mathcal{Q}^\ast$ or $-\mathbf{n}_\infty \in K_\mathcal{Q}^\ast$, proving the statement. 
\qed\end{proof}

\clearpage
\bibliographystyle{plain}
\bibliography{references}

\end{document}